\theoremstyle{plain}
\newtheorem{thm}{Theorem}[section]
\newtheorem{prop}[thm]{Propsition}
\theoremstyle{definition}
\newtheorem{defn}[thm]{Definition}
\theoremstyle{remark}
\newtheorem{rmk}[thm]{Remark}
\theoremstyle{notation}
\theoremstyle{lemma}
\newtheorem{lem}[thm]{Lemma}
\theoremstyle{Corollary}
\newtheorem{coro}[thm]{Corollary}
\newtheorem{scho}[thm]{Scholium}
\theoremstyle{example}
\newtheorem{exam}[thm]{Example}
\newcommand{\thmref}[1]{Theorem~\ref{#1}}
\newcommand{\secref}[1]{Section~\ref{#1}}
\newcommand{\lemref}[1]{Lemma~\ref{#1}}
\newcommand{\schref}[1]{Scholium~\ref{#1}}
\newcommand{\propref}[1]{Proposition~\ref{#1}}
\newcommand{\corref}[1]{Corollary~\ref{#1}}
\newcommand{\defref}[1]{Definition~\ref{#1}}
\newcommand{\eqnref}[1]{(\ref{#1})}
\newcommand{\exref}[1]{Example~\ref{#1}}
\newcommand{\figref}[1]{Figure~\ref{#1}}
\newcommand{\rmkref}[1]{Remark~\ref{#1}}
\newcommand{\be}{\begin{equation}}
\newcommand{\ee}{\end{equation}}
\newcommand{\ra}{\rightarrow}
\newcommand{\mc}{\mathcal}
\newcommand{\C}{{\mathbb C}}
\newcommand{\Z}{{\mathbb Z}}
\newcommand{\N}{{\mathbb N}}
\newcommand{\Q}{{\mathbb Q}}
\newcommand{\bU}{{\mathbb U}}
\newcommand{\CA}{{\mathcal A}}
\newcommand{\CP}{{\mathcal P}}
\newcommand{\CI}{{\mathcal I}}
\newcommand{\Ind}{{\rm{Ind}}}
\newcommand{\Ker}{{\rm{Ker}}}
\newcommand{\id}{{\rm{id}}}
\newcommand{\ty}{{\rm{ty}}}
\newcommand{\mf}{\mathfrak}
\newcommand{\fs}{{\mathfrak s}}  
\newcommand{\ft}{{\mathfrak t}}
\newcommand{\fP}{{\mf P}}
\newcommand{\fc}{{\mf c}}
\newcommand{\cB}{\mc B}
\newcommand{\cT}{{\mc T}}
\newcommand{\bi}{\mathbf i}
\newcommand{\bj}{\mathbf j}
\newcommand{\La}{\Lambda}
\newcommand{\la}{\lambda}
\newcommand{\de}{\delta}
\newcommand{\tcp}{2\CP_r}
\newcommand{\End}{{\rm{End}}}
\newcommand{\Hom}{{\rm{Hom}}}
\newcommand{\Sym}{{\rm{Sym}}}
\newcommand{\inv}{{^{-1}}}
\newcommand{\ot}{\otimes}
\newcommand{\sdim}{{\rm{sdim}}}
\newcommand{\fso}{{\mathfrak {so}}}
\newcommand{\osp}{{\mathfrak {osp}}}
\newcommand{\Sp}{{\rm Sp}}
\newcommand{\Or}{{\rm O}}
\newcommand{\OSp}{{\rm OSp}}
\newcommand{\Std}{{\rm Std}}
\newcommand{\Tab}{{\rm Tab}}
\numberwithin{equation}{section}
\def\moverlay{\mathpalette\mov@rlay}
\def\mov@rlay#1#2{\leavevmode\vtop{%
   \baselineskip\z@skip \lineskiplimit-\maxdimen
   \ialign{\hfil$\m@th#1##$\hfil\cr#2\crcr}}}
\newcommand{\charfusion}[3][\mathord]{
    #1{\ifx#1\mathop\vphantom{#2}\fi
        \mathpalette\mov@rlay{#2\cr#3}
      }
    \ifx#1\mathop\expandafter\displaylimits\fi}
\begin{document}
\title[SFT of invariant theory for $\OSp(m|2n)$]{On the second fundamental theorem  of  invariant theory for the orthosymplectic supergroup}
\author{Yang Zhang}
\address{School of Mathematical Sciences,
	University of Science and Technology of China, Hefei, China}
\address{School of Mathematics and Statistics, University of Sydney, Sydney, Australia}
\email{yang91@mail.ustc.edu.cn}
\begin{abstract}
Let $\OSp(V)$ be the orthosymplectic supergroup on an orthosymplectic vector superspace $V$ of superdimension $(m|2n)$.	Lehrer and Zhang showed that  there is a surjective algebra homomorphism $F_r^r: B_r(m-2n)\rightarrow {\rm{End}}_{{\rm OSp}(V)}(V^{\otimes r})$, where $B_r(m-2n)$ is the Brauer algebra of degree $r$ with parameter $m-2n$.  The second fundamental theorem of invariant theory in this setting seeks to describe the kernel ${\rm Ker} F_r^r$ of $F_r^r$ as a 2-sided ideal of $B_r(m-2n)$. In this paper, we show that ${\rm Ker} F_r^r\neq 0$ if and only if $r\geq r_c:=(m+1)(n+1)$, and give a basis and a dimension formula  for ${\rm Ker} F_r^r$.  We show that  ${\rm Ker} F_r^r$   as a 2-sided ideal of $ B_r(m-2n)$ is generated by ${\rm Ker} F_{r_c}^{r_c}$ for any $r\ge r_c$, and we provide an explicit set of generators for ${\rm Ker} F_{r_c}^{r_c}$.  These generators coincide in the  classical case with those obtained in recent papers of  Lehrer and  Zhang on the second fundamental theorem of invariant theory for the orthogonal and symplectic groups. As an application we obtain the necessary and sufficient conditions for the endomorphism algebra $\rm{End}_{{\mathfrak {osp}}(V)}(V^{\otimes r})$ over the orthosymplectic Lie superalgebra ${\mathfrak {osp}}(V)$ to be isomorphic to $B_r(m-2n)$	
\end{abstract}

\subjclass[2010]{16W22,15A72,17B20}

\keywords{Orthosymplectic Lie supergroup; Brauer category; invariant theory; second fundamental theorem}

\maketitle


\section{Introduction}

Let $\OSp(V)$ be the orthosymplectic supergroup on an orthosymplectic vector superspace $V$ of superdimension $(m|2n)$.
G. Lehrer and R. Zhang \cite{LZ2} (also see \cite{DLZ}) constructed a full tensor functor $F$ from the Brauer category $\cB(m-2n)$ with parameter $m-2n$ to the category of tensor modules for $\OSp(V)$. When restricted to homomorphism spaces $B_k^\ell(m-2n)$ of $\cB(m-2n)$, the functor gave rise to surjective linear maps $F_k^\ell: B_k^\ell(m-2n)\to \Hom_{\OSp(V)}(V^{\otimes k}, V^{\otimes\ell})$. Lehrer and Zhang \cite{LZ3} gave a description of the kernel of $F_k^\ell$ for all $k$ and $\ell$.  These results amount to the first and second fundamental theorems (FFT and SFT) of invariant theory for the orthosymplectic supergroup in a category theoretical setting.

Particularly interesting is the case when $k=\ell=r$. In this case, the morphism space $B_r^r(m-2n)$   acquires the structure of a unital associative algebra with the composition of morphisms as the multiplication. This is the same algebra introduced by Brauer \cite{B} in the 30s when studying tensor decompositions for the orthogonal and symplectic groups, which is now known as the Brauer algebra.  Now $F_r^r: B_r^r(m-2n)\to \End_{\OSp(V)}(V^{\otimes r})$ is a surjective algebra homomorphism, which is a generalisation of the celebrated Schur-Weyl-Brauer duality to the orthosymplectic supergroup.  The SFT in this setting  is equivalent to characterising $\Ker F_r^r$ as a 2-sided ideal of the Brauer algebra $B_r^r(m-2n)$, which gives the relations among $\OSp(V)$-invariants in the endomorphism space $\End_{\C}(V^{\ot r})$. 

This paper is devoted to developing the SFT for $\OSp(V)$ in this endomorphism algebra setting,  which particularly includes the orthogonal group $\Or(V)$  and symplectic group $\Sp(V)$ \cite{ LZ1, LZ4,H,HX}  as special cases.   This requires a deep understanding of  $\Ker F_r^r$; see, e.g., \cite{LZ1} for the orthogonal group case. However, Lehrer and Zhang's description for $\Ker F_r^r$  \cite[Corollary 5.8]{LZ3} is actually extrinsic. Even the much simpler problem, that the minimal $r$ such that $\Ker F_r^r\neq 0$, turned out to be  nontrivial. It was conjectured in \cite{LZ3} that this happens if and only if $r\ge r_c:=(m+1)(n+1)$, which was only proved in the case of $\OSp(1|2)$. This conjecture will be completely confirmed in this paper.

As $\Ker F_r^r$ is a $2$-sided ideal of $B_r^r(m-2n)$,  one would like to have a convenient set of generators for it.  This is required, e.g., when constructing a presentation for the endomorphism algebra $\End_{\OSp(V)}(V^{\otimes r})$. In particular,  this allows one to study decompositions of tensor representations of $\OSp(V)$ through the representation theory of the Brauer algebra \cite{CdVP, ES1}. In the classical case of the orthogonal and symplectic groups (i.e., $n=0$ or $m=0$),  it was shown in \cite{LZ1, LZ4} that the kernel is remarkably generated by a single idempotent.

The main result of this paper is a significantly improved SFT for $\OSp(V)$; see \thmref{thm:SFT-refine} (cf. \cite[Corollary 5.8]{LZ3}). Our strategy is as follows:
\begin{enumerate}
	\item Find the minimal $r：=r_c$ such that $\Ker F_r^r\neq 0$;
	\item Find a generating set for the 2-sided ideal $\Ker F_{r_c}^{r_c}$ of the Brauer algebra $B_{r_c}^{r_c}(m-2n)$;
	\item In the general case $r\geq r_c$, we prove that the embedding of $\Ker F_{r_c}^{r_c}$ into $B_r^r(m-2n)$ exactly generates $\Ker F_r^r$, and hence the generators of $\Ker F_{r_c}^{r_c}$ make up a generating set for the 2-sided ideal $\Ker F_r^r$.
\end{enumerate}

Now we describe the content of this paper in more details. 

We consider a natural symmetric group $\Sym_{2r}$ action on the Brauer algebra $B_r^r(m-2n)$.  The composition of Brauer diagrams naturally defines a right action of the Brauer algebra $B_{2r}^{2r}(m-2n)$ on $B_{2r}^0(m-2n)$. This  restricts to a right action of $\C\Sym_{2r}$, which is contained in  $B_{2r}^{2r}(m-2n)$ as a subalgebra. Using a particular  isomorphism $B_{2r}^0(m-2n)\cong B_r^r(m-2n)$, this natural action of $\C\Sym_{2r}$ can be translated to a $\C\Sym_{2r}$ action on $B_r^r(m-2n)$ and its 2-sided ideal $\Ker F_r^r$.

We  investigate thoroughly  the $\C\Sym_{2r}$-module structure on $\Ker F_r^r$. We show that $\Ker F_r^r$ admits a multiplicity-free direct sum of Specht modules over $\C\Sym_{2r}$, which are identified  in $\Ker F_r^r$ explicitly.  This  enables us to obtain a closed dimension formula for $\Ker F_r^r$ and  deduce that $\Ker F_r^r$ is nonzero if and only if $r\ge r_c:=(m+1)(n+1)$ in \thmref{thmKeriso}. Using the  well known basis for Specht modules, we obtain a basis for $\Ker F_r^r$ indexed by standard tableaux on even partitions containing $\la_c:=((2n+2)^{m+1})$ for all $r\geq r_c$.

We introduce combinatorial gadgets which we call standard sequences of increasing types, and use them to construct diagrammatically a set of elements of $\Ker F_{r_c}^{r_c}$.  We prove in \thmref{thmKermini} that this set generates  $\Ker F_{r_c}^{r_c}$ as a $2$-sided ideal of $B_{r_c}^{r_c}(m-2n)$.
In the course of proof, we have used Garnir relations  arising from the representation theory of symmetric groups (see e.g., \cite[\S 7.2]{J}) to reduce the number of generators. Traditionally Garnir relations were used to express polytabloids  in terms of standard ploytabloids.

In the general case $r\geq r_c$, recall that there is a canonical embedding of Brauer algebras $B_{r_c}^{r_c}(m-2n)\hookrightarrow B_{r}^r(m-2n)$. We prove in \thmref{thm:SFT-refine} that the canonical embedding of $\Ker F_{r_c}^{r_c}$  generates $\Ker F_r^r$ as a 2-sided ideal of $B_{r}^r(m-2n)$.  This proof is based on the fact that $\Ker F_r^r$  as a $\C\Sym_{2r}$-module (not as a 2-sided ideal of $B_r^r(m-2n)$) is generated by a single element which we construct explicitly in terms of Brauer diagrams. Therefore, the  generators for $\Ker F_{r_c}^{r_c}$  form a generating set for $\Ker F_r^r$. 

The above  results can be significantly sharpened in the case of  $\OSp(1|2n)$. We show in \thmref{thmgen} that $\Ker F_r^r$  as a 2-sided ideal of $B_r^r(1-2n)$  is singly generated,  similar to the situation of the orthogonal and symplectic groups  \cite{LZ1,LZ4}.  For general $\OSp(V)$, we do not know whether $\Ker F_r^r$ is singly generated,  but we feel that the answer probably is negative.

We consider two applications of the general results on the SFT of $\OSp(V)$ obtained here.

We obtain in \thmref{thm:osp-end} the necessary and sufficient conditions for  the Brauer algebra $B^r_r(m-2n)$  to be isomorphic to the endomorphism algebra $\End_{\osp(V)}(V^{\ot r})$. This answers completely the isomorphism question of  Ehrig and Stroppel \cite{ES}, thus considerably strengthens their main result \cite[Theorem  A]{ES}.

In \secref{Secclassical}, we recover the main results in the papers \cite{ LZ1, LZ4} of Lehrer and Zhang on SFTs for the orthogonal and symplectic groups as special cases of results proved here. This contextualises the treatment in those papers and provides uniform and simpler proofs for all main theorems.

\vskip 0.3cm

\noindent {\bf Acknowledgements:}
I would like to thank Professors Gus Lehrer and Ruibin Zhang for many enlightening discussions and helpful suggestions. This work was supported at different stages by student stipends from the China Scholarship Council and the Australian Research Council.

\section{Invariant theory of the orthosymplectic supergroup}
We recall  the FFT and SFT of invariant theory for the orthosymplectic supergroup given in  \cite{LZ2, LZ3}.
We  work over the complex number field $\C$ throughout this paper.

\subsection{Tensor representations of the orthosymplectic supergroup}
Let  $V=V_{\bar{0}} \oplus V_{\bar{1}}$  be a complex vector superspace  with superdimension $\sdim(V)=(m|\ell)$, which means that $\dim (V_{\bar{0}})=m$ and $\dim (V_{\bar{1}})=\ell$. The \textit{parity} $[v]$ of any homogeneous element $v\in V_{\bar{i}}$ is defined by $[v]:=\bar{i}$ ($i=0,1$). 
Assume that $V$ admits a non-degenerate even bilinear form
\begin{eqnarray}\label{eq:form} (-,-):\; V\times V\longrightarrow \C,\end{eqnarray}
which is supersymmetric in the sense that $(u,v)=(-1)^{[u][v]}(v, u)$ for $u,v\in V$. This implies that the form is symmetric on $V_{\bar0}\times V_{\bar0}$ and skew-symmetric on $V_{\bar1}\times V_{\bar1}$, and satisfies $(V_{\bar0},  V_{\bar1})=0=(V_{\bar1}, V_{\bar0})$. Therefore $\ell$ must be even. 

We refer to  Harish-Chandra super pair \cite{CCF,DM} as the  supergroup. Let $\osp(V)$ be the orthosymplectic Lie superalgebra \cite{K} preserving the bilinear form \eqref{eq:form}.  Let ${\rm O}(V_{\bar 0})$ and ${\rm Sp}(V_{\bar 1})$ be the orthogonal and  symplectic groups, which are the isometry algebraic groups preserving the restrictions of the form  \eqref{eq:form} to $V_{\bar 0}$ and to $V_{\bar 1}$ respectively. Then $\OSp(V)_0:={\rm O}(V_{\bar 0})\times{\rm Sp}(V_{\bar 1})$ naturally acts on $\osp(V)$ as automorphisms, and we have the Harish-Chandra super pair $\left(\OSp(V)_0, \osp(V)\right)$.  One may regard this as the orthosymplectic supergroup \cite{DM},  and  hereafter $\OSp(V)$ denotes the Harish-Chandra super pair $(\OSp(V)_0, \osp(V))$.

An $\OSp(V)$-module $M$ is defined to be a module for the Harish-Chandra super pair, namely, $M$ is a vector superspace that is a module for both $\osp(V)$
and $\OSp(V)_0$ (as an algebraic group) such that the two actions are compatible
with the action of  $\OSp(V)_0$ on $\osp(V)$. The subspace of invariants is given by
\[
M^{\OSp(V)}=\left\{v\in M
\mid
g v = v, \ \  Xv=0, \ \
\forall \  g\in \OSp(V)_0, \ \  X\in \osp(V)
\right\}.
\]
If $N$ is another $\OSp(V)$-module, then $\Hom_\C(M, N)$ is naturally an $\OSp(V)$-module with the action defined for any $g\in \OSp(V)_0$ and $X\in \osp(V)$ on $\phi\in \Hom_\C(M, N)$ by
\[
(g.\phi)(v) = g \phi( g^{-1}v), \quad (X.\phi)(v)= X\phi(v) - (-1)^{[X][\phi]} \phi(Xv), \quad \forall v\in M.
\]
The second equation can be  extended to  inhomogeneous elements by linearity as usual.

We are largely interested in the category  $\cT_{\OSp(V)}(V)$ of tensor representations of $\OSp(V)$, which has as objects the  tensor powers $V^{\ot r}, r\in \N$ and  is the full subcategory of $\OSp(V)$-modules \cite{LZ2}. The usual tensor product of $\OSp(V)$-modules and of $\OSp(V)$-homomorphisms endows $\cT_{\OSp(V)}(V)$ with the structure of a strict symmetric tensor category \cite{JS}.

There are some special morphisms in $\cT_{\OSp(V)}(V)$.
Let $v_a$ ($a=1, 2, \dots, m+2n$) be a homogeneous basis of $V$, and let
$\bar{v}_a$ ($a=1, 2, \dots, m+2n$) be the dual basis, that is $(\bar{v}_a, v_b)=\delta_{a b}$ for all $a, b$.  Let $c_0=\sum_{a=1}^{m+2n} v_a\otimes\bar{v}_a$, which is canonical in that it is independent of the choice of the basis.
The following linear maps are clearly morphisms of $\cT_{\OSp(V)}(V)$:
\begin{eqnarray}\label{P-C-C}
\begin{aligned}
&\tau: V\otimes V\longrightarrow V\otimes V, \quad v\otimes w \mapsto (-1)^{[v][w]}w\otimes v, \\
&\check{C}: \C \longrightarrow V\otimes V,  \quad 1\mapsto c_0, \\
&\hat{C}: V\otimes V \longrightarrow \C, \quad v\otimes w\mapsto (v, w).
\end{aligned}
\end{eqnarray}
A set of relations among $\tau, \check{C}$ and $\hat{C}$ were given in \cite[Lemma 5.3]{LZ2}.

\subsection{The Brauer category}\label{SecBracat}
The Brauer category was  introduced in \cite{LZ4} to study the invariant theory for symplectic and orthogonal groups.  Let $k$ and $l$ be nonnegative integers and assume that $k+l$ is even. A Brauer $(k,l)$-diagram is a graph with $(k+l)/2$ edges and $k+l$ vertices, where the vertices  are arranged in two rows such that there are $l$ vertices in the top row and $k$ vertices in the bottom row, and each vertex is incident to exactly one edge.  Fixing $\de\in \C$, we denote by $B^l_k(\de)$ the vector space over $\C$ with basis consisting of Brauer  $(k,l)$-diagrams. There are two bilinear operations  \cite[Definition 2.3]{LZ4} ,
\begin{eqnarray}\label{eq:products}
\begin{aligned}
&&\text{composition} & \quad \circ:  & & B_l^p(\delta)
\times B_k^l(\delta)\longrightarrow B_k^p(\delta),  \\
&&\text{tensor product} & \quad  \otimes: & & B_p^q(\delta)
\times B_k^l(\delta)\longrightarrow B_{k+p}^{q+l}(\delta),
\end{aligned}
\end{eqnarray}
which are defined as follows:
\begin{itemize}
	\item Let $D_1\in B_l^p(\delta)$ and $D_2\in  B_k^l(\delta)$. We  place $D_1$ above $D_2$ and  then identify vertices in the bottom row of $D_1$ with the corresponding vertices in the top row of $D_2$.  After deleting all closed loops, say $f(D_1,D_2)$ loops, in the concatenation, we obtain a new diagram $D\in  B_k^p(\delta)$, and hence define $D_1\circ D_2:=\delta^{f(D_1,D_2)}D$.
	\item The tensor product $D_1\ot D_2$ with $D_1\in B_p^q(\delta)$ and $D_2\in B_k^l(\delta)$ is obtained by simply placing $D_2$ on the right of $D_1$ without overlapping.
\end{itemize}

\begin{defn}  \cite[Definition 2.4]{LZ4}
    The {\em Brauer category} $\cB(\delta)$
	is the $\C$-linear category equipped with the tensor product bi-functor $\otimes$
	such that
	\begin{enumerate}
		\item the set of objects is $\N=\{0, 1, 2, \dots\}$,  and $\Hom_{\cB(\delta)}(k, l):=B_k^l(\delta)$ for any pair of objects $k, l\in \N$; the composition of morphisms is given by the composition of Brauer diagrams;
		\item the tensor product $k\otimes l$ of objects  $k, l$ is  $k+l$ in $\N$, and the tensor product of morphisms is given by the tensor product of Brauer diagrams.
	\end{enumerate}
\end{defn}
All Brauer diagrams in the Brauer category can be generated by
the four elementary Brauer diagrams
\begin{center}
\begin{picture}(205, 40)(-5,0)
\put(0, 0){\line(0, 1){40}}
\put(5, 0){,}

\put(40, 0){\line(1, 2){20}}
\put(60, 0){\line(-1, 2){20}}
\put(65, 0){,}

\qbezier(100, 0)(115, 60)(130, 0)
\put(135, 0){,}

\qbezier(170, 30)(185, -30)(200, 30)
\put(200, 0){,}
\end{picture}
\end{center}
\smallskip

\noindent
which will be denoted by $I$, $X$, $A_0$ and $U_0$ respectively,
by composition and tensor product. The complete set of relations among these generators is described in \cite[Theorem 2.6(2)]{LZ4}.

The Brauer category has the structure of a strict symmetric tensor category 
\cite{JS} with
$0$ being the identity object. It admits identical left and right dualities, where all objects are self dual, and the evaluation and co-evaluation maps arise from $A_0$ and $U_0$.

The Brauer algebra  $B_r(\delta)$ \cite{B} now arises as the endomorphism space $B_r^r(\delta)$ with the multiplication
given by composition of morphisms  in the Brauer category. It is generated by elements $s_i, e_i, 1\leq i\leq r-1$ as shown below
	\begin{center}
		\begin{tikzpicture}
		\node at (-0.5,0.75){$s_i=$};
		\draw (0,0) -- (0,1.5);
		\draw (1.5,0)--(1.5,1.5);
		\node at (0.75,0.75){$...$};
		\node at (0.75,0.2){$i-1$};
		\draw (1.9,0) -- (2.6,1.5);
		\draw (2.6,0) -- (1.9,1.5);
		\draw (3.0,0) -- (3.0,1.5);
		\draw (4.0,0)--(4.0,1.5);
		\node at (3.5,0.75){$...$};
		\node at (4.2,0.1){$,$};
		
		\node at (5,0.75){$e_i=$};
		\draw (5.5,0) -- (5.5,1.5);
		\draw (7,0)--(7,1.5);
		\node at (6.25,0.75){$...$};
		\node at (6.25,0.2){$i-1$};
		\draw  plot  [smooth,tension=1] coordinates{(7.4,1.5) (7.75,0.8) (8.1,1.5)};
		\draw  plot  [smooth,tension=1] coordinates{(7.4,0) (7.75,0.7) (8.1,0)};
		
		\draw (8.5,0) -- (8.5,1.5);
		\draw (9.5,0)--(9.5,1.5);
		\node at (9,0.75){$...$};	
		\end{tikzpicture}
	\end{center}
 with  relations:
$$\begin{matrix}s_i^2=1,\,\,e_i^2=\delta e_i,\,\,e_is_i=e_i=s_ie_i,
\quad  \,1\leq i\leq r-1,\\
s_is_j=s_js_i,\,\,s_ie_j=e_js_i,\,\,e_ie_j=e_je_i,\quad \,1\leq
i<j-1\leq r-2,\\ 
s_is_{i+1}s_i=s_{i+1}s_is_{i+1},\,\,
e_ie_{i+1}e_i=e_i,\,\,
e_{i+1}e_ie_{i+1}=e_{i+1},\,\, \,1\leq
i\leq r-2,\\
s_ie_{i+1}e_i=s_{i+1}e_i,\,\,e_{i+1}e_is_{i+1}=e_{i+1}s_i,\quad \,1\leq
i\leq r-2.\end{matrix}
$$
In particular, the elements $s_i$ generate the group algebra  $\CA_r:=\C\Sym_r$  of the symmetric group $\Sym_r$ of degree $r$.  Hence $B_r(\delta)$ contains   $\CA_r$ as a subalgebra.

\begin{rmk}\label{rmk:embed}
There is a canonical embedding of $B_r(\delta)$ in $B_{r+1}(\delta)$ for each $r$, which identifies the standard generators of $B_r(\delta)$ with the first $r-1$ pairs of generators $s_i, e_i$ ($i\le r-1$) of $B_s(\delta)$.  This leads to a canonical embedding of
$B_r(\delta) \hookrightarrow B_s(\delta)$ for any $s>r$.
\end{rmk}

\subsection{Categorical FFT and SFT}
We assume throughout the paper that the superdimension $\sdim(V)=(\dim V_{\bar 0}|\dim V_{\bar 1})$ of $V$ is equal to $(m|2n)$.

\begin{thm}{\rm{(\cite[Theorem 5.4]{LZ2})}}\label{thm:functor} There exists a unique tensor functor $F: \cB(m-2n) \longrightarrow \cT_{\OSp(V)}(V)$, which sends the object $r$ to $V^{\otimes r}$ and the morphism $D: k \to \ell$ to $F(D): V^{\otimes k}\longrightarrow V^{\otimes l}$,  where $F(D)$
is defined on the generators of Brauer diagrams by
\[F(I)=\id_V, F(X)=\tau, F(U_0)=\check{C}, F(A_0)=\hat{C}.   \]
\end{thm}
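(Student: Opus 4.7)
The plan is to verify that the prescription $F(I)=\id_V$, $F(X)=\tau$, $F(U_0)=\check{C}$, $F(A_0)=\hat{C}$ extends to a well-defined tensor functor. Since the Brauer category $\cB(m-2n)$ is generated as a strict symmetric tensor category by the four elementary diagrams $I,X,U_0,A_0$ subject to the relations recorded in \cite[Theorem 2.6(2)]{LZ4}, uniqueness is immediate once existence is established, and existence reduces to checking (i) that each image morphism lies in $\cT_{\OSp(V)}(V)$ and (ii) that the images satisfy every relation imposed on the generators of $\cB(m-2n)$.

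First I would verify $\OSp(V)$-equivariance of the four target morphisms. For $\id_V$ this is trivial. The supersymmetric braiding $\tau$ is natural for arbitrary linear maps of superspaces, hence $\OSp(V)$-equivariant. For $\check{C}$ one checks that $c_0=\sum_a v_a\otimes\bar{v}_a$ is $\OSp(V)$-invariant: invariance under $\OSp(V)_0$ is clear since this group preserves the bilinear form and hence permutes dual bases, while invariance under $\osp(V)$ is the infinitesimal version. Equivariance of $\hat{C}$ follows immediately from the definition of $\osp(V)$ and $\OSp(V)_0$ as stabilisers of the form.

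Next I would check the defining relations. They fall into three groups. The symmetric-group relations on $X$ (namely $X^2=I\otimes I$ and the braid relation) reduce to the standard assertion that the supersymmetric flip is a symmetric braiding. The duality relations, expressing that $V$ is self-dual with unit $\check{C}$ and counit $\hat{C}$, boil down to the identity $v=\sum_a(\bar{v}_a,v)\,v_a$ and its $\bar{v}$-analogue. The remaining relations intertwine the braiding with cups and caps; each collapses to an instance of supersymmetry $(u,v)=(-1)^{[u][v]}(v,u)$ once one unpacks the diagrammatic composition. The one scalar relation is the closed-loop evaluation $\hat{C}\circ\check{C}=\delta\,\id_\C$; here
\[
\hat{C}\circ\check{C}(1)=\sum_{a}(v_a,\bar{v}_a)=\sum_{a}(-1)^{[v_a]}(\bar{v}_a,v_a)=\sum_{a}(-1)^{[v_a]}=m-2n,
\]
which matches $\delta=m-2n$ and is precisely what fixes the parameter of the source category.

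The main obstacle, in my view, is the careful tracking of Koszul signs in the compatibility relations between the braiding and the evaluation/co-evaluation, where permuting odd basis vectors past one another introduces parity-dependent signs that must be shown to cancel globally. Once these sign checks are performed, existence of the functor follows from the universal property of the presentation of $\cB(m-2n)$ by generators and relations, and uniqueness is automatic since both the generators of $\cB(m-2n)$ and their prescribed images are specified.
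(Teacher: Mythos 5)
The paper does not prove this theorem; it is imported verbatim as \cite[Theorem~5.4]{LZ2} and used as a black box, so there is no ``paper's own proof'' to compare against here. Your sketch is nonetheless the standard and correct approach that the cited result is based on: verify that each image morphism is $\OSp(V)$-equivariant (the paper itself asserts this as obvious right after \eqref{P-C-C}), then verify that the images satisfy the defining relations of $\cB(m-2n)$ from \cite[Theorem~2.6(2)]{LZ4}, and invoke the universal property of the presentation by generators and relations for existence, with uniqueness automatic. Your loop computation $\hat{C}\circ\check{C}(1)=\sum_a(-1)^{[v_a]}=m-2n$ is correct and is exactly the reason the parameter must be $m-2n$. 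The one caveat I would flag is that you explicitly defer the Koszul-sign verifications in the braiding--duality compatibility relations; in a full write-up those are the nontrivial part of the argument, and a checker cannot accept "the signs cancel" without seeing them. Be aware that \cite[Lemma~5.3]{LZ2} is precisely the collection of these verified relations among $\tau$, $\check{C}$, $\hat{C}$, so your sketch is structurally the same as the original, just with the technical checks still owed.
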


We will denote by $F_k^l: B_k^l(m-2n)\to \Hom_{\OSp(V)}(V^{\otimes k}, V^{\otimes l})$ the restriction of $F$ to the morphism space $B_k^l(m-2n)$.
Let $\hat{A}^r$ be the Brauer diagram in $B_{2r}^0(m-2n)$ corresponding to the partition $\{1,2,\dots,2r\}=\{1,2\}\bigcup\{3,4\}\dots\bigcup \{2r-1, 2r\}$ as shown in Figure \ref{Fig3}.
\setlength{\unitlength}{0.35mm}
\begin{figure}[h]
\begin{center}
\begin{picture}(200, 30)(0,0)
   \qbezier(30, 0)(45, 60)(60, 0)
   \qbezier(80, 0)(95, 60)(110, 0)
   \put(120, 8){$......$}
   \qbezier(150, 0)(165, 60)(180, 0)
   \put(28,-12){$1$}
   \put(58,-12){$2$}
   \put(78,-12){$3$}
   \put(108,-12){$4$}
   \put(135,-12){$2r-1$}
   \put(177,-12){$2r$}
\end{picture}
\end{center}
\caption{The diagram $\hat{A}^r: 2r\to 0$}
\label{Fig3}
\end{figure}

The following results were proved in \cite{LZ2} and \cite{LZ3}, which are the
first and second fundamental theorems of invariant theory for $\OSp(V)$
in the categorical language.

\begin{thm}{ \rm{(FFT, \cite[Theorem 5.6, Corollary 5.8]{LZ2})}}\label{thm:fft-cat}  The functor $F: \cB(m-2n)\rightarrow \cT_{\OSp(V)}(V)$ is full.
	That is, $F_k^l: B_k^l(m-2n)\to \Hom_{\OSp(V)}(V^{\otimes k}, V^{\otimes l})$ is surjective for all $k, l$.
\end{thm}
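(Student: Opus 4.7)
My plan is to use the rigid symmetric monoidal structures carried by both $\cB(m-2n)$ and $\cT_{\OSp(V)}(V)$ to reduce the fullness of $F$ to a single invariant-theoretic statement, and then to invoke a super version of the classical first fundamental theorem for $\OSp(V)$. Both categories have left/right dualities with every object self-dual, generated on the Brauer side by the cup $U_0$ and cap $A_0$, and on the module side by $\check C$ and $\hat C$. Since the tensor functor $F$ of \thmref{thm:functor} sends $U_0 \mapsto \check C$ and $A_0 \mapsto \hat C$, the standard ``bending'' isomorphisms
\[
B_k^\ell(m-2n) \;\cong\; B_{k+\ell}^{\,0}(m-2n), \qquad \Hom_{\OSp(V)}(V^{\otimes k}, V^{\otimes \ell}) \;\cong\; \bigl(V^{\otimes(k+\ell)}\bigr)^{\OSp(V)}
\]
(the latter using the $\OSp(V)$-equivariant isomorphism $V \cong V^{*}$ afforded by $(-,-)$) intertwine $F_k^\ell$ with $F_{k+\ell}^0$. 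Both sides vanish unless $k+\ell$ is even, so it suffices to prove that $F_{2r}^0$ is surjective for every $r \geq 0$.

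Under these identifications, the image of $F_{2r}^0$ is precisely the span of the ``contraction'' functionals $\hat C_\pi : V^{\otimes 2r} \to \C$ attached to perfect matchings $\pi$ of $\{1,\ldots,2r\}$: up to supersymmetric signs, $\hat C_\pi(v_{i_1} \otimes \cdots \otimes v_{i_{2r}}) = \prod_{\{a,b\} \in \pi}(v_{i_a},v_{i_b})$. Dually, fullness reduces to the statement that $(V^{\otimes 2r})^{\OSp(V)}$ is spanned by the $\Sym_{2r}$-translates of $F(U_0)^{\otimes r}$. This is the heart of the matter and my main obstacle.

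I would prove this invariant-theoretic statement in two steps. First, via $V^{\otimes 2r} \cong V^{\otimes r} \otimes (V^{\otimes r})^{*} \cong \End_\C(V^{\otimes r})$, reduce to showing that $\End_{\OSp(V)}(V^{\otimes r})$ is spanned by the images of Brauer diagrams, i.e.\ by permutations of tensor factors together with cup-cap compositions. Second, combine the super Schur--Weyl surjection $\C\Sym_r \twoheadrightarrow \End_{\GL(V)}(V^{\otimes r})$ of Sergeev with a descent argument to $\OSp(V) \subset \GL(V)$: decompose $V^{\otimes r}$ by the parities of its tensor factors so that $\End_{\OSp(V)_0}(V^{\otimes r})$ splits into blocks indexed by pairs of parity sequences; apply the classical Weyl FFTs for $\Or(V_{\bar 0})$ and $\Sp(V_{\bar 1})$ within the purely-even and purely-odd blocks respectively; and then use annihilation by the odd generators of $\osp(V)$ to propagate the description across mixed-parity blocks, where the cross-parity contractions live. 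The reductivity of $\OSp(V)_0 = \Or(V_{\bar 0}) \times \Sp(V_{\bar 1})$ ensures the parity decomposition is compatible with taking invariants; the key delicacy is sign book-keeping under the odd $\osp(V)$-action, which is transparently organised through the Brauer diagrammatics already supplied by \thmref{thm:functor}.
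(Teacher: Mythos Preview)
This theorem is not proved in the present paper: it is quoted from \cite{LZ2}. So there is no in-paper argument to compare against, only the cited one. Your reduction via rigidity to the surjectivity of $F_{2r}^0$, and the further reformulation as ``$(V^{\otimes 2r})^{\OSp(V)}$ is spanned by contraction functionals $\hat C_\pi$,'' are both standard and correct; they match the set-up of \cite{LZ2,LZ4}.

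The gap is in your final paragraph. Decomposing $V^{\otimes 2r}$ by parity sequences and applying the classical FFTs for $\Or(V_{\bar 0})$ and $\Sp(V_{\bar 1})$ does describe $(V^{\otimes 2r})^{\OSp(V)_0}$ block-by-block, but that space is strictly larger than the Brauer span already for $r=1$: it is two-dimensional (the symmetric form on $V_{\bar 0}^{\otimes 2}$ and the skew form on $V_{\bar 1}^{\otimes 2}$ can be scaled independently), while the Brauer span is one-dimensional. So the entire content of the theorem is precisely your last sentence, ``use annihilation by the odd generators of $\osp(V)$ to propagate the description across mixed-parity blocks.'' You have not carried this out, and it is not a routine bookkeeping exercise: the odd part $\osp(V)_{\bar 1}\cong V_{\bar 0}\otimes V_{\bar 1}$ moves one parity index at a time, producing a coupled linear system between neighbouring blocks, and one must show that its solution space coincides with the Brauer span for every $r$. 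Nothing in Sergeev's $\GL(V)$ duality helps here, since the passage from $\GL(V)$-invariants to $\OSp(V)$-invariants is exactly the step at issue.

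For contrast, the proof in \cite{LZ2} (see also \cite{DLZ}) avoids this block analysis altogether. It extends scalars to an infinite-dimensional Grassmann algebra $\Lambda$, so that the $\Lambda$-points of $V$ become a free $\Lambda$-module carrying a genuine orthogonal (or symplectic) form over the commutative ring $\Lambda_{\bar 0}$; the classical FFT over this base then yields the result, and one descends back to $\C$. This base-change technique is what the present paper alludes to just before \thmref{thm:sft}. If you want a direct proof along your lines, you would need to supply the full linear-algebra argument for the odd-generator constraints, which is essentially an independent proof of the super FFT.
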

Therefore we obtain the following result.
\begin{coro}\label{corofft}
	The  algebra homomorphism $F_r^r:  B_r^r(m-2n)\rightarrow \End_{\OSp(V)}(V^{\ot r})$ is surjective.
\end{coro}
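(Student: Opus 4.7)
The plan is to deduce the corollary directly from \thmref{thm:fft-cat} by specialising to the case $k=\ell=r$. There are essentially two things to verify: first, that $F_r^r$ is surjective as a linear map, and second, that it respects the algebra structures on both sides, i.e. is indeed an algebra homomorphism rather than merely a linear map.

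For surjectivity, I would simply invoke \thmref{thm:fft-cat} with $k=\ell=r$. This gives directly that $F_r^r: B_r^r(m-2n)\to \Hom_{\OSp(V)}(V^{\ot r}, V^{\ot r})=\End_{\OSp(V)}(V^{\ot r})$ is surjective. (Here the codomain is indeed the space of $\OSp(V)$-equivariant endomorphisms because, by \thmref{thm:functor}, $F$ takes values in the tensor category $\cT_{\OSp(V)}(V)$, so for any Brauer diagram $D\in B_r^r(m-2n)$ the image $F(D)$ is a morphism in that category and hence commutes with the $\OSp(V)$-action.)

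For the algebra homomorphism property, the key point is that the multiplication on $B_r^r(m-2n)$ is, by construction, the composition $\circ$ in the category $\cB(m-2n)$, while the multiplication on $\End_{\OSp(V)}(V^{\ot r})$ is the usual composition of linear maps, which is the composition in $\cT_{\OSp(V)}(V)$. Since $F$ is a functor, it preserves composition: $F(D_1\circ D_2)=F(D_1)\circ F(D_2)$ for composable diagrams $D_1, D_2$. Applied with $D_1, D_2\in B_r^r(m-2n)$, this gives exactly the multiplicativity of $F_r^r$. Preservation of the unit is automatic since the identity morphism $\id_r$ in $\cB(m-2n)$ (the diagram consisting of $r$ parallel strands) is sent by $F$ to $\id_V^{\ot r}=\id_{V^{\ot r}}$, by the formula $F(I)=\id_V$ together with the tensor functor property.

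There is no genuine obstacle here; the corollary is an immediate consequence of \thmref{thm:fft-cat} together with the functoriality built into \thmref{thm:functor}. The only mildly substantive observation to record is that ``surjectivity onto $\End_{\OSp(V)}(V^{\ot r})$'' is a stronger statement than ``surjectivity onto $\End_\C(V^{\ot r})$'' would be, and it is the former that the FFT delivers, because the codomain of the functor $F$ is the category $\cT_{\OSp(V)}(V)$ of tensor $\OSp(V)$-modules rather than plain vector superspaces.
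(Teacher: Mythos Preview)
Your proposal is correct and follows the same approach as the paper, which treats the corollary as an immediate consequence of \thmref{thm:fft-cat} (specialising to $k=\ell=r$) without further comment. You are in fact more explicit than the paper in verifying the algebra-homomorphism property via functoriality, which is a reasonable level of detail to include.
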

Recall that $\Ker F_k^l\cong\Ker F_{k+l}^0$ for all $k, l$ as vector spaces \cite{LZ4} (see also Remark \ref{rmk:iso-kernels}). The method used in the proofs of \cite[Theorem 3.2, Corollary 5.8]{LZ2} allows one to change base rings between $\C$ and the infinite dimensional Grassmann algebra.  Using the same method we obtain the following result from \cite[Theorem 5.4]{LZ3}.

\begin{thm}{ \rm{(SFT, \cite[Theorem 5.4]{LZ3})}}\label{thm:sft}
	If $k+l$ is odd, $\Ker F_{k+l}^0=0$ trivially; if $k+l=2r$ is even,
	$\Ker F_{2r}^0=\hat{A}^r\circ I(m,n)$, where
	$\hat{A}^r\in B^0_{2r}(m-2n)$ is the $(2r, 0)$-diagram shown
	in Figure \ref{Fig3},  and  $I(m, n)$ is the sum of the 2-sided ideals  of $\C\Sym_{2r}$ corresponding
	to partitions which contain an $(m+1)\times (2n+1)$ rectangle.
\end{thm}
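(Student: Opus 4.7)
The plan naturally separates by parity. If $k+l$ is odd, then $B_{k+l}^{0}(m-2n)=0$ since a $(k+l,0)$-Brauer diagram requires an integer number of edges, so the statement is vacuous. Hence I focus on the even case $k+l=2r$.

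First I would record the right $\C\Sym_{2r}$-module structure on $B_{2r}^{0}(m-2n)$. Via the canonical embedding $\C\Sym_{2r}\hookrightarrow B_{2r}^{2r}(m-2n)$, composition in $\cB(m-2n)$ makes $B_{2r}^{0}(m-2n)$ a cyclic right $\C\Sym_{2r}$-module with generator $\hat A^r$. Since each $(2r,0)$-Brauer diagram is a perfect matching of the $2r$ bottom vertices and any matching is reached from $\hat A^r$ by a permutation, the stabiliser of $\hat A^r$ is the hyperoctahedral subgroup $\fB_r\cong \Sym_2\wr\Sym_r\subset \Sym_{2r}$, so
\[
B_{2r}^{0}(m-2n)\;\cong\;\Ind_{\fB_r}^{\Sym_{2r}}\mathbf 1\;\cong\;\bigoplus_{\substack{\lambda\vdash 2r\\ \lambda \text{ even}}} S^{\lambda},
\]
the last isomorphism being the classical Gelfand-pair decomposition over partitions with all parts even.

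Next I would observe that $F_{2r}^{0}$ is $\C\Sym_{2r}$-equivariant because the signed swaps $\tau$ used to represent $\Sym_{2r}$ on $V^{\otimes 2r}$ are $\OSp(V)$-intertwiners. Hence $\Ker F_{2r}^{0}$ is a direct sum of some of the Specht components $S^{\lambda}$ above, and the task reduces to identifying the set of $\lambda$ that contribute. For this I would turn to a decomposition of the target: by the Schur-Weyl-Brauer duality of \thmref{thm:fft-cat} together with the classification of simple tensor representations of $\OSp(V)$, the $\OSp(V)$-module $V^{\otimes 2r}$ decomposes into summands indexed by partitions $\lambda$ which fit into the $(m,2n)$-hook, i.e.\ $\lambda$ does not contain the $(m+1)\times(2n+1)$ rectangle. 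Extracting $\OSp(V)$-invariants and computing the $\Sym_{2r}$-module structure of $\Hom_{\OSp(V)}(V^{\otimes 2r},\C)$ then yields a direct sum of $S^{\lambda}$ for precisely the even $\lambda$ not containing that rectangle.

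Comparing the two decompositions identifies $\Ker F_{2r}^{0}$ with the sum of $\hat A^r\circ S^\lambda$ over $\lambda\supseteq ((2n+1)^{m+1})$, which is exactly $\hat A^r\circ I(m,n)$. The main obstacle I expect is the vanishing statement that simple $\OSp(V)$-modules attached to partitions containing the $(m+1)\times(2n+1)$ rectangle are zero in $V^{\otimes 2r}$; in the super setting this is genuinely nontrivial because naive dimension counts fail. The clean strategy, hinted at in the paragraph preceding the theorem, is the Grassmann-envelope trick: extend scalars to an infinite-dimensional Grassmann algebra $\Lambda$ and identify $\OSp(V)$-invariants in $V^{\otimes 2r}$ with ordinary orthogonal invariants in a classical tensor power, whereupon Weyl's SFT for the orthogonal group supplies the rectangular obstruction and translates back to the required statement.
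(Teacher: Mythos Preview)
The paper does not actually prove this theorem: it is quoted from \cite[Theorem 5.4]{LZ3}, with the single remark that the Grassmann-algebra base-change method of \cite[Theorem 3.2, Corollary 5.8]{LZ2} is what makes it go through over $\C$. Your proposal ultimately lands on exactly this Grassmann-envelope reduction to the classical orthogonal SFT, so in that sense you agree with the paper.

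That said, the first part of your sketch --- decompose $\Hom_{\OSp(V)}(V^{\otimes 2r},\C)$ as a $\Sym_{2r}$-module by appealing to ``Schur--Weyl--Brauer duality together with the classification of simple tensor representations of $\OSp(V)$'' --- is circular in the way you yourself flag: knowing which even $S^\lambda$ survive in the target is precisely the content of the SFT, and the hook condition on $\OSp(V)$-simples is not available to you independently at this point. So that paragraph is not a proof strategy but a restatement of the goal. The module-theoretic setup you give (cyclicity of $B_{2r}^0$ under $\hat A^r$, stabiliser $\Sym_2\wr\Sym_r$, decomposition into Specht modules over even partitions) is correct and is exactly what the paper records later as Lemma~\ref{LemIsoSym}; but the substance of the argument is entirely in the Grassmann-envelope step, which you correctly identify but do not carry out.
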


Note that $F_{2r}^0: B_{2r}^0(m-2n)\rightarrow \Hom_{\OSp(V)}(V^{\ot 2r},\C)$. In other words, \thmref{thm:sft} gives relations among invariants of $\Hom_{\OSp(V)}(V^{\ot 2r},\C)$, which are linear functions on $V^{\ot 2r}$ that are constant on $\OSp(V)$-orbits. This can be translated to $\Ker F_r^r$ via the isomorphism $\Ker F_{2r}^0\cong \Ker F_r^r$; see \cite[Corollary 5.8]{LZ3}. However,  we would like to obtain a more intrinsic characterisation for $\Ker F_r^r$ in the following section.

\section{$\Sym_{2r}$-module structure on $\Ker F_r^r$}
We shall determine the minimal $r$ such that $\Ker F_r^r\neq 0$ and construct a basis for it.

\subsection{Actions of $\Sym_{2r}$ on $B_{2r}^0(m-2n)$ and  $B_r^r(m-2n)$}
\subsubsection{Actions of $\Sym_{2r}$ on $B_{2r}^0(m-2n)$ and  $B_r^r(m-2n)$}
We begin by  describing a particular isomorphism between $B_{2r}^0(m-2n)$ and $B_r^r(m-2n)$, which will be used extensively in the remainder of the paper.

As a convention, we shall regard any permutation $\sigma\in \Sym_k$ as a Brauer $(k,k)$-diagram with vertices $\{1,2\dots,k\}$ in the bottom row and $\{\sigma(1),\sigma(2),\dots,\sigma(k)\}$ in the top row. Let $\omega_{2r}\in \Sym_{2r}$ be the following diagram 
\begin{center}
	\begin{picture}(250, 90)(40,0)
	\qbezier(20, 80)(20, 40)(20, 10)
	\qbezier(60, 80)(50, 40)(40, 10)
	\qbezier(100, 80)(80, 45)(60, 10)
	\qbezier(140, 80)(100, 35)(80, 10)
	\qbezier(180, 80)(130, 35)(100, 10)
	\qbezier(260, 80)(185, 40)(140, 10)
	
	\qbezier(40, 80)(120, 30)(160, 10)
	\qbezier(80, 80)(140, 35)(180, 10)
	\qbezier(120, 80)(160, 45)(200, 10)
	\qbezier(160, 80)(200, 35)(220, 10)
	\qbezier(240, 80)(250, 40)(260, 10)
	\qbezier(280, 80)(280, 40)(280, 10)
	
	\put(18,2){\tiny$1$}
	\put(38,2){\tiny$2$}
	\put(58,2){\tiny$3$}
	\put(78,2){\tiny$4$}
	\put(98,2){\tiny$5$}
	\put(138,2){\tiny$r$}
	\put(150,2){\tiny$r+1$}
	\put(173,2){\tiny$r+2$}
	\put(250,2){\tiny$2r-1$}
	\put(278,2){\tiny$2r$}
	
	\put(18,83){\tiny$1$}
	\put(38,83){\tiny$2$}
	\put(58,83){\tiny$3$}
	\put(78,83){\tiny$4$}
	\put(98,83){\tiny$5$}
	\put(250,83){\tiny$2r-1$}
	\put(278,83){\tiny$2r$}
	
	\put(115, 13) {......}
	\put(200, 75){......}
	\put(228, 13){......}
	\end{picture}
\end{center}
such that $\omega_{2r}(i)=2i-1$ and $\omega_{2r}(r+i)=2i$ for all $1\leq i\leq r$. Let $U_r\in B_0^{2r}(m-2n)$ be the diagram shown in \figref{TranU}.  We define the following map
\[
\mathbb{U}_r: B_{2r}^0(m-2n) \ra B_r^r(m-2n),
\]
which sends $A\in B_{2r}^0(m-2n)$ to the composition
$(I_{r}\ot A)\circ (I_r\ot \omega_{2r})\circ (U_r\ot I_r)$  of Brauer diagrams. Here $I_r$ is the  $r$-th tensor power $I\otimes\dots\otimes I$ of the elementary Brauer diagram $I$, which is the unit in the Brauer algebra $B_r^r(m-2n)$.

\begin{figure}[ht]
	\begin{center}
		\begin{picture}(250,49) (0,20) 
		\qbezier(0,70)(35,0)(70,70)
		\qbezier(15,70)(50,0)(85,70)
		\qbezier(45,70)(80,0)(115,70)
		\put(-2,72){\tiny$1$}
		\put(13,72){\tiny$2$}
		\put(43,72){\tiny$r$}
		\put(59,72){\tiny$r+1$}
		\put(81,72){\tiny$r+2$}
		\put(113,72){\tiny$2r$}
		\put(30,60){...}
		\put(50,15){$U_r$}
		
		\qbezier(150,40)(185,110)(220,40)
		\qbezier(165,40)(200,110)(235,40)
		\qbezier(195,40)(230,110)(265,40)
		\put(148,32){\tiny$1$}
		\put(163,32){\tiny$2$}
		\put(193,32){\tiny$r$}
		\put(209,32){\tiny$r+1$}
		\put(231,32){\tiny$r+2$}
		\put(263,32){\tiny$2r$}	
		\put(180,50){...}
		\put(240,50){...}
		\put(195,15){$A_r$}
		\end{picture}
	\end{center}
	\caption{$U_r$ and $A_r$}
	\label{TranU}
\end{figure}

\begin{exam}
	{\rm
		Let $r=3$ and
		\begin{center}
			\begin{picture}(130,40)(0,0)
			\put(-30,10){$A=$}
			
			\qbezier(0,10)(30,70)(60,10)
			\qbezier(30,10)(75,70)(120,10)
			\qbezier(90,10)(120,70)(150,10)
			\put(155,4){,}
			\end{picture}
		\end{center}
		then  $\mathbb{U}_3(A)$ can be represented diagrammatically as
		\begin{center}
			\begin{picture}(240,100)(-80,-35)
			\put(-110,12){$\bU_3(A)=$}
			
			\qbezier(0,30)(15,70)(30,30)
			\qbezier(15,30)(37.5,70)(60,30)
			\qbezier(45,30)(60,70)(75,30)
			
			\qbezier(-45,55)(-45,30)(-45,0)
			\qbezier(-30,55)(-30,30)(-30,0)
			\qbezier(-15,55)(-15,30)(-15,0)
			
			\qbezier[80](-60,30)(15,30)(90,30)
			
			\qbezier(0,30)(0,15)(0,0)
			\qbezier(15,30)(30,15)(45,0)
			\qbezier(30,30)(22.5,15)(15,0)
			\qbezier(45,30)(52.5,15)(60,0)
			\qbezier(60,30)(45,15)(30,0)
			\qbezier(75,30)(75,15)(75,0)
			
			\qbezier[80](-60,0)(15,0)(90,0)
			
			\qbezier(-45,0)(-22.5,-40)(0,0)
			\qbezier(-30,0)(-7.5,-40)(15,0)
			\qbezier(-15,0)(7.5,-40)(30,0)
			
			\qbezier(45,0)(45,-10)(45,-25)
			\qbezier(60,0)(60,-10)(60,-25)
			\qbezier(75,0)(75,-10)(75,-25)
			
			\put(100,12){$=$}

			\qbezier(120,30)(135,0)(150,30)
			\qbezier(180,30)(150,15)(120,0)
			\qbezier(150,0)(165,30)(180,0)        		
			\put(185,0){.}
			
			\end{picture}
		\end{center}		
		In particular, $\bU_r(\hat{A}^{ r})=I_r$ for any $r\in \Z_{>0}$.
	}
\end{exam}

Similarly, we introduce the  diagram $A_r$ as shown in
Figure \ref{TranU}, and define the map
\[
\mathbb{A}_r: B_r^r(m-2n)\longrightarrow B_{2r}^0(m-2n),\quad D\mapsto A_r\circ (I_r\ot D)\circ \omega_{2r}^{-1},
\]
Clearly,  the linear maps
$\bU_r$ and $\mathbb{A}_r$ are mutual inverses.

Now $B_k^k(m-2n)=B_k(m-2n)$, the Brauer algebra of degree $k$. For any even number $k=2r$, the composition of morphisms in $\cB(m-2n)$ naturally defines
a right action of the Brauer algebra of degree $2r$ on $B_{2r}^0(m-2n)$:
\[
B_{2r}^0(m-2n)\times B_{2r}^{2r}(m-2n)\longrightarrow  B_{2r}^0(m-2n), \quad
(A, D)\mapsto A\circ D.
\]
Since the group algebra $\CA_{2r}:=\C\Sym_{2r}$ is a subalgebra of the Brauer algebra, this restricts to a right action of $\CA_{2r}$.  By using the  anti-automorphism $\sharp$ of $\CA_{2r}$ induced by the map $\sigma\rightarrow \sigma\inv$ for $\sigma\in \Sym_{2r}$ (i.e., the antipode
of $\CA_{2r}$ with the standard Hopf algebra structure), we can turn the right  $\CA_{2r}$-action into a left action
\begin{eqnarray}\label{eq:action-1}
\ast:  \CA_{2r}\times B_{2r}^0(m-2n)\longrightarrow  B_{2r}^0(m-2n),
\quad \alpha\ast A:=A\circ(\alpha)^{\sharp}.
\end{eqnarray}

Using the vector space isomorphism $\bU_r$ between $B_r^r(m-2n)$ and
$B_{2r}^0(m-2n)$ and its inverse map $\mathbb{A}_r$, we can translate the above
$\CA_{2r}$-action on $B_{2r}^0(m-2n)$ to $B_r^r(m-2n)$:
\begin{eqnarray}\label{eq:action-2}
\ast:  \CA_{2r}\times B_r^r(m-2n)\longrightarrow  B_r^r(m-2n), \quad
\alpha\ast D :=\bU_r(\mathbb{A}_{r}(D)\circ \alpha^\sharp).
\end{eqnarray}

\begin{lem} \label{AUmaps}
	The maps
	\[
	\begin{aligned}
	\bU_r: B_{2r}^0(m-2n)\ra B_r^r(m-2n),\quad
	\mathbb{A}_r:  B_r^r(m-2n) \ra B_{2r}^0(m-2n)
	\end{aligned}
	\]
	are  mutual inverse $\CA_{2r}$-isomorphisms.
\end{lem}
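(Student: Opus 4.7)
The plan is essentially to unwind the definitions, because the $\CA_{2r}$-action on $B_r^r(m-2n)$ was introduced in \eqref{eq:action-2} precisely by transport of structure along $\bU_r$ and $\mathbb{A}_r$. First I would recall from the construction of $\bU_r$ and $\mathbb{A}_r$ (given via tensoring with $I_r$, the ``half-twist'' $\omega_{2r}$, and the cups/caps $U_r$ and $A_r$) that they are mutually inverse linear maps; this fact is already stated in the excerpt immediately above the lemma, and can be verified diagrammatically using the snake/zig-zag identities
\[
(A_r \otimes I_r) \circ (I_r \otimes U_r) = I_r = (I_r \otimes A_r) \circ (U_r \otimes I_r)
\]
together with $\omega_{2r} \circ \omega_{2r}^{-1} = I_{2r}$ in $\Sym_{2r}$. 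After this, only $\CA_{2r}$-equivariance remains.

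Next, I would verify that $\mathbb{A}_r$ is $\CA_{2r}$-equivariant: for $D \in B_r^r(m-2n)$ and $\alpha \in \CA_{2r}$, the very definition \eqref{eq:action-2} gives
\[
\mathbb{A}_r(\alpha \ast D) \;=\; \mathbb{A}_r\bigl(\bU_r(\mathbb{A}_r(D) \circ \alpha^{\sharp})\bigr) \;=\; \mathbb{A}_r(D) \circ \alpha^{\sharp} \;=\; \alpha \ast \mathbb{A}_r(D),
\]
where the middle equality uses $\mathbb{A}_r \circ \bU_r = \id$ from the first step, and the outer equality is \eqref{eq:action-1}. Equivariance of $\bU_r$ follows by the symmetric computation, using instead $\bU_r \circ \mathbb{A}_r = \id$:
\[
\bU_r(\alpha \ast A) \;=\; \bU_r(A \circ \alpha^{\sharp}) \;=\; \bU_r\bigl(\mathbb{A}_r(\bU_r(A)) \circ \alpha^{\sharp}\bigr) \;=\; \alpha \ast \bU_r(A),
\]
for all $A \in B_{2r}^0(m-2n)$. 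In particular, the transported operation in \eqref{eq:action-2} is indeed an algebra action, since it is $\CA_{2r}$-equivariantly identified with the action in \eqref{eq:action-1}, which is an action by construction.

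There is no substantive obstacle here: the content of the lemma is that the definition \eqref{eq:action-2} makes sense and that $\bU_r, \mathbb{A}_r$ intertwine it with \eqref{eq:action-1}; both follow automatically from transport of structure along a linear isomorphism. The only non-formal input is the mutually-inverse property of $\bU_r$ and $\mathbb{A}_r$, which reduces to the standard rigidity identities satisfied by $U_r$ and $A_r$ in the Brauer category together with $\omega_{2r}\omega_{2r}^{-1}=1$; a short diagrammatic check suffices. I would present the proof in this order and keep it brief.
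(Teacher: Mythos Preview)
Your proposal is correct. The paper states this lemma without proof, treating it as immediate from the definitions; your argument is exactly the transport-of-structure verification that justifies it, and is what the paper implicitly relies on. One tiny slip: in your computation for $\bU_r$ you write ``using instead $\bU_r\circ\mathbb{A}_r=\id$'', but the substitution $A=\mathbb{A}_r(\bU_r(A))$ actually uses $\mathbb{A}_r\circ\bU_r=\id$; since both identities hold this is harmless.
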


\subsubsection{Functoriality}
We now consider  properties of $B_{2r}^0(m-2n)$ and $B_r^r(m-2n)$ as $\CA_{2r}$-modules under the functor $F:\cB(m-2n)\longrightarrow \cT_{\OSp(V)}(V)$.
\begin{lem}\label{lem:hom-iso-1}
The following  diagrams are commutative:
	\begin{equation} \label{CommDiagram}
	\xymatrix{
		B_{2r}^0(m-2n) \ar[r]^{{\mathbb U}_r} \ar[d]_{F_{2r}^0} &  B^r_r(m-2n)  \ar[d]^{F_r^{r}} \\
		\Hom_{G}(V^{\ot 2r},\C)\ar[r]_{F{\mathbb U}_r}  & \End_{G}(V^{\ot r})}	
	\quad\quad
     \xymatrix{
		B_r^r(m-2n) \ar[r]^{{\mathbb A}_r} \ar[d]_{F_{r}^r} &  B_{2r}^{0}(m-2n)  \ar[d]^{F_{2r}^{0}} \\
		\End_{G}(V^{\ot r}) \ar[r]_{F{\mathbb A}_r}  & \Hom_{G}(V^{\ot 2r},\C). }
		\end{equation}
\end{lem}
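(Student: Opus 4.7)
The plan is to interpret this lemma as a formal consequence of the fact (Theorem \ref{thm:functor}) that $F$ is a tensor functor, and then unwind the definitions of $\mathbb{U}_r$ and $\mathbb{A}_r$. First I would clarify the notation on the bottom rows of the two squares: the map $F\mathbb{U}_r \colon \Hom_G(V^{\otimes 2r},\C) \to \End_G(V^{\otimes r})$ is defined by the tensor-space analogue of the formula for $\mathbb{U}_r$, namely
\[
F\mathbb{U}_r(\phi) \;=\; (\id_{V^{\otimes r}} \otimes \phi) \circ (\id_{V^{\otimes r}} \otimes F(\omega_{2r})) \circ (F(U_r) \otimes \id_{V^{\otimes r}}),
\]
and analogously $F\mathbb{A}_r(\psi) = F(A_r) \circ (\id_{V^{\otimes r}} \otimes \psi) \circ F(\omega_{2r})^{-1}$. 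With this understood, the commutativity of both squares is built into the definitions.

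The key computation is then a one-line application of functoriality. For any $A \in B_{2r}^0(m-2n)$,
\[
F_r^r(\mathbb{U}_r(A)) \;=\; F\bigl((I_r \otimes A) \circ (I_r \otimes \omega_{2r}) \circ (U_r \otimes I_r)\bigr),
\]
and since $F$ preserves composition and tensor products, and sends the identity morphism $I_r$ of $\cB(m-2n)$ to $\id_{V^{\otimes r}}$, the right-hand side equals
\[
(\id_{V^{\otimes r}} \otimes F_{2r}^0(A)) \circ (\id_{V^{\otimes r}} \otimes F(\omega_{2r})) \circ (F(U_r) \otimes \id_{V^{\otimes r}}) \;=\; F\mathbb{U}_r(F_{2r}^0(A)).
\]
Linearity extends this to all of $B_{2r}^0(m-2n)$, establishing commutativity of the first square. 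The second square is handled by the identical argument, using $F(\omega_{2r}^{-1}) = F(\omega_{2r})^{-1}$ (which follows from $F$ being a functor applied to the invertibility relation $\omega_{2r} \circ \omega_{2r}^{-1} = I_{2r}$ in the Brauer category).

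There is no genuine obstacle here beyond the notational one of pinning down what $F\mathbb{U}_r$ and $F\mathbb{A}_r$ denote; once they are defined to be the tensor-category incarnations of the same composition-and-tensor-product recipe used to build $\mathbb{U}_r$ and $\mathbb{A}_r$, the lemma reduces to compatibility of $F$ with $\circ$ and $\otimes$, which is part of the definition of a tensor functor. I would therefore present the proof as a short verification, citing \thmref{thm:functor} for the tensor-functoriality of $F$ and then displaying the one chain of equalities above.
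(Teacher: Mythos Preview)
Your proposal is correct and follows essentially the same approach as the paper: both arguments reduce the commutativity to the fact that $F$ preserves composition and tensor product, and both display the single chain of equalities $F_r^r(\mathbb{U}_r(A)) = (\id_{V^{\otimes r}} \otimes F_{2r}^0(A)) \circ (\id_{V^{\otimes r}} \otimes F(\omega_{2r})) \circ (F(U_r) \otimes \id_{V^{\otimes r}}) = F\mathbb{U}_r(F_{2r}^0(A))$. Your version is slightly more explicit in spelling out the definitions of $F\mathbb{U}_r$ and $F\mathbb{A}_r$ up front (the paper records these formulas in the subsequent Lemma~\ref{lem:hom-iso-2}), but the substance is identical.
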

\begin{proof}
Since $F$ preserves both composition and tensor product of Brauer diagrams, given any $A\in B_{2r}^0(m-2n)$,  we have
$$
F_r^r(\bU_r(A))=(\id_{V}^{\ot r}\ot F_{2r}^0(A))\circ (\id_{V}^{\ot r}\ot F(\omega_{2r}))\circ (F(U_r)\ot \id_{V}^{\ot r})=F\bU_r(F_{2r}^0(A)).
$$
The commutativity of the second diagram can be verified similarly.
\end{proof}

Since both
$F_{2r}^0: B_{2r}^0(m-2n)\longrightarrow \Hom_G(V^{\otimes 2r},  \C)$ and $F_r^r: B_r^r(m-2n)\longrightarrow \End_G(V^{\otimes r})$  are surjective, by using Lemma \ref{lem:hom-iso-1}, we obtain natural $\CA_{2r}$-actions
\begin{eqnarray}
\begin{aligned}
&\CA_{2r}\otimes \Hom_G(V^{\otimes 2r},  \C) \longrightarrow  \Hom_G(V^{\otimes 2r},  \C), \\
&\CA_{2r}\otimes \End_G(V^{\otimes r}) \longrightarrow  \End_G(V^{\otimes r}),
\end{aligned}
\end{eqnarray}
which are defined as follows. For any $\phi\in \Hom_G(V^{\otimes 2r},  \C)$, there exists $A\in B_{2r}^0(m-2n)$ such that $\phi=F_{2r}^0(A)$. Then we can define
\[
\alpha.\phi:=F_{2r}^0(\alpha\ast A) = F_{2r}^0( A\circ \alpha^\sharp) = F_{2r}^0( A) F_{2r}^{2r}(\alpha^\sharp) .
\]
The action on $\End_G(V^{\otimes r})$ is similarly defined.
It is clear that $F_{2r}^0$ and $F_r^r$ are $\CA_{2r}$-maps. Therefore,  the kernels  of $F_{2r}^0$ and $F_r^r$ are $\CA_{2r}$-submodules of $B_{2r}^0(m-2n)$ and $B_r^r(m-2n)$ respectively.
We also have the following result.
\begin{lem} \label{lem:hom-iso-2} The linear maps
  $$
   \begin{aligned}
    F\bU_r&=(\id_{V}^{\ot r}\ot -)\circ (\id_{V}^{\ot r}\ot F(\omega_{2r}))\circ (F(U_r)\ot \id_{V}^{\ot r}): \Hom_{G}(V^{\ot 2r},\C)\ra \End_{G}(V^{\ot r}),\\
    F\mathbb{A}_r&=F(A_r)\circ (\id_{V}^{\ot r}\ot -)\circ F(\omega_{2r}^{-1}): \End_{G}(V^{\ot r}) \ra \Hom_{G}(V^{\ot 2r},\C)
    \end{aligned}
$$
are mutual inverse $\CA_{2r}$-isomorphisms.
\end{lem}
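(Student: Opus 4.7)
The plan is to transfer the content of \lemref{AUmaps} across the two commutative diagrams in \lemref{lem:hom-iso-1}, exploiting the surjectivity of $F_{2r}^0$ and $F_r^r$ provided by the FFT (\thmref{thm:fft-cat} and \corref{corofft}). No new geometric input is needed; the assertion is a pure diagram chase, and the main task is to organise it cleanly.

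First I would establish that $F\bU_r$ and $F\mathbb{A}_r$ are mutually inverse. Given $f\in \End_{G}(V^{\ot r})$, the FFT supplies some $D\in B_r^r(m-2n)$ with $F_r^r(D)=f$. Applying the second diagram of \eqref{CommDiagram} gives $F\mathbb{A}_r(f)=F_{2r}^0(\mathbb{A}_r(D))$, and then the first diagram yields
\[
F\bU_r\bigl(F\mathbb{A}_r(f)\bigr)=F\bU_r\bigl(F_{2r}^0(\mathbb{A}_r(D))\bigr)=F_r^r\bigl(\bU_r(\mathbb{A}_r(D))\bigr)=F_r^r(D)=f,
\]
where the last equality uses $\bU_r\circ \mathbb{A}_r=\id$ from \lemref{AUmaps}. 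An entirely symmetric argument, starting from $\phi=F_{2r}^0(A)$ for some lift $A\in B_{2r}^0(m-2n)$ of $\phi$, shows $F\mathbb{A}_r\circ F\bU_r=\id$ on $\Hom_{G}(V^{\ot 2r},\C)$.

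Next I would verify $\CA_{2r}$-equivariance. Recall that the action on $\Hom_{G}(V^{\ot 2r},\C)$ (respectively $\End_{G}(V^{\ot r})$) is by construction the action induced from $\ast$ on $B_{2r}^0(m-2n)$ (respectively $B_r^r(m-2n)$) through the $\CA_{2r}$-equivariant surjection $F_{2r}^0$ (respectively $F_r^r$), so these actions are well defined and the surjections are $\CA_{2r}$-maps. Writing $\phi=F_{2r}^0(A)$ and combining the first diagram of \eqref{CommDiagram} with the equivariance of $\bU_r$ from \lemref{AUmaps},
\[
F\bU_r(\alpha.\phi)=F\bU_r\bigl(F_{2r}^0(\alpha\ast A)\bigr)=F_r^r\bigl(\bU_r(\alpha\ast A)\bigr)=F_r^r\bigl(\alpha\ast \bU_r(A)\bigr)=\alpha.F_r^r(\bU_r(A))=\alpha.F\bU_r(\phi).
\]
The same computation with the roles interchanged handles $F\mathbb{A}_r$.

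The hard part is essentially absent here: the heavy lifting was already accomplished in \lemref{AUmaps} (equivariance of $\bU_r,\mathbb{A}_r$ over $\CA_{2r}$) and in the functoriality statement \lemref{lem:hom-iso-1}, together with the surjectivity coming from the FFT. The only point that must be handled carefully, but is automatic once noted, is well-definedness: the induced $\CA_{2r}$-action on $\Hom_G(V^{\ot 2r},\C)$ and $\End_G(V^{\ot r})$ does not depend on the choice of Brauer-algebra lift, because both $F_{2r}^0$ and $F_r^r$ are $\CA_{2r}$-equivariant, so their kernels are $\CA_{2r}$-stable. Hence every identity established at the level of Brauer diagrams descends, via these surjections, to the representation side, giving the claim.
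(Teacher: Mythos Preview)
Your proof is correct and follows essentially the same idea as the paper, which dispatches the lemma in a single sentence: ``Applying the functor $F$ to \lemref{AUmaps}, we immediately arrive at the lemma.'' Your diagram chase via \lemref{lem:hom-iso-1} and the FFT surjectivity is precisely what that one-liner unpacks to; the only minor difference is that the paper's phrasing implicitly treats $F\bU_r$ and $F\mathbb{A}_r$ as images of categorical formulas under the tensor functor $F$ (so the inverse and equivariance identities transfer directly), whereas you route the argument through lifts along $F_{2r}^0$ and $F_r^r$---an equally valid and perhaps more transparent way to say the same thing.
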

\begin{proof}
Applying the functor $F$ to \lemref{AUmaps}, we immediately arrive at the lemma.
\end{proof}

It follows from Lemma \ref{lem:hom-iso-1} and the fact that $\bU_r$  and $F\bU_r$ are isomorphisms that
\begin{coro}\label{CorIsoKer}
$\Ker F_{2r}^0$ and $\Ker F_r^r$  are isomorphic as $\CA_{2r}$-modules.
\end{coro}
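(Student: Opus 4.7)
The plan is to prove the corollary by a direct diagram chase, exploiting the two tools already set up in the excerpt: the fact that $\bU_r$ and $\mathbb{A}_r$ are mutual inverse $\CA_{2r}$-isomorphisms of the full diagram spaces (\lemref{AUmaps}), together with the commutativity of the two squares in \lemref{lem:hom-iso-1}. The key observation is that $F\bU_r$ is a bijection (\lemref{lem:hom-iso-2}), so in the commutative square
\[
\xymatrix{
B_{2r}^0(m-2n) \ar[r]^{\bU_r} \ar[d]_{F_{2r}^0} &  B_r^r(m-2n)  \ar[d]^{F_r^r} \\
\Hom_{G}(V^{\ot 2r},\C)\ar[r]_{F\bU_r}  & \End_{G}(V^{\ot r})
}
\]
the horizontal arrows are isomorphisms and the vertical arrows are $\CA_{2r}$-module surjections. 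A standard fact then forces the kernels of the vertical maps to be carried isomorphically into one another by the top horizontal isomorphism.

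Concretely, I would first check one inclusion. Take $A \in \Ker F_{2r}^0$; then commutativity of the left-hand square gives $F_r^r(\bU_r(A)) = F\bU_r(F_{2r}^0(A)) = 0$, so $\bU_r(A)\in \Ker F_r^r$. Hence $\bU_r$ restricts to a map $\Ker F_{2r}^0\to \Ker F_r^r$. Applying the same argument to the second commutative square of \lemref{lem:hom-iso-1} with $\mathbb{A}_r$ in place of $\bU_r$, I get a restricted map $\Ker F_r^r\to \Ker F_{2r}^0$.

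Because $\bU_r$ and $\mathbb{A}_r$ are mutually inverse on the ambient spaces $B_{2r}^0(m-2n)$ and $B_r^r(m-2n)$ (\lemref{AUmaps}), their restrictions to the respective kernels remain mutually inverse, so they are bijections. Finally, since $\bU_r$ and $\mathbb{A}_r$ are $\CA_{2r}$-equivariant on the full spaces, the restricted maps are automatically $\CA_{2r}$-equivariant, yielding the desired isomorphism of $\CA_{2r}$-modules.

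There is essentially no obstacle here: the substantive content lives in the earlier lemmas, which package the functoriality of $F$ with respect to composition/tensor product and the compatibility of $\bU_r, \mathbb{A}_r$ with the $\CA_{2r}$-action defined in \eqref{eq:action-1} and \eqref{eq:action-2}. The only thing to be careful about is to invoke both commutative squares of \lemref{lem:hom-iso-1} (not just one), so that one gets the restricted map in \emph{both} directions without having to argue abstractly about $\Ker$ passing through an isomorphism; this keeps the proof short and transparent.
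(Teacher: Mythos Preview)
Your proposal is correct and follows essentially the same route as the paper: the paper's proof is the single sentence ``It follows from Lemma~\ref{lem:hom-iso-1} and the fact that $\bU_r$ and $F\bU_r$ are isomorphisms,'' and your diagram chase is precisely the unpacking of that sentence.
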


\begin{rmk}\label{rmk:iso-kernels}
Using slight variations of $\bU_r$ and $\mathbb{A}_r$, one can similarly define $\Sym_{k+l}$-actions on $B_k^l(m-2n)$ and show that $B_k^l(m-2n)\cong B_{k+l}^0(m-2n)$ and $\Ker F_k^l\cong\Ker F_{k+l}^0$ as $\CA_{k+l}$-modules
for all $k$ and $l$.
\end{rmk}

\subsection{$\Ker F_{2r}^0$ and $\Ker F_r^r$ as $\Sym_{2r}$-modules}\label{secmindeg}
The $\Sym_{2r}$-actions on $B_{2r}^0(m-2n)$ and $B_r^r(m-2n)$ will enable us to undertsand $\Ker F_{2r}^0$ and $\Ker F_r^r$.
\subsubsection{Basic facts on $\Sym_k$}  \label{sect:sym}
Fix a nonnegative integer $k$.
Denote a partition $\lambda$ of $k$ ($\la\vdash k$) by $\lambda=(1^{m_1}2^{m_2}\dots)$.  Let $\Std(\la)$ be the set of standard $\la$-tableaux, and let
$\ft^{\la}$ (resp. $\ft_{\la}$) be the standard $\la$-tableau with $1,2,\dots,r$ appearing in  order from left to right (resp. top to bottom) along successive rows (resp. columns).

We will always consider the right action of $\Sym_k$ on the set of $\lambda$-tableaux.
Then for any $\ft\in \Std(\la)$ there exists $d(\ft)\in \Sym_k$ such that $\ft=\ft^{\la}d(\ft)$, where $d(\ft)$ acts on $\ft^{\la}$ by permuting its entries. In particular, there exists $w_{\la}:=d(\ft_{\la})\in \Sym_k$ such that $\ft^{\la}w_{\la}=\ft_{\la}$.

Given a $\la$-tableau $\ft$,  we denote by $R(\ft)$ and $C(\ft)$ respectively the row  and column stabilisers in $\Sym_k$. Define
\begin{equation}\label{xyla}
\begin{aligned}
x_\la(\ft)=\sum_{\sigma\in R(\ft)}\sigma,&\quad y_\la(\ft)=\sum_{\sigma\in C(\ft)}\epsilon(\sigma)\sigma, \quad \fc_\la(\ft):=x_\la(\ft) y_\la(\ft),
\end{aligned}
\end{equation}
where  $\epsilon$ is the sign character of $\Sym_k$, and $\fc_\la(\ft)$ is the Young symmetriser on $\ft$ of shape $\la$. For $\ft=\ft^\lambda$, we will denote these elements by
$x_{\la}$, $y_{\la}$ and $\fc_{\la}=x_{\la}y_{\la}$ respectively.
In particular,
\[
x_{(k)}=\sum_{\sigma\in \Sym_k}\sigma, \quad
y_{(1^k)}=\sum_{\sigma\in \Sym_k}\epsilon(\sigma)\sigma.
\]

Let $\CA_k:=\C\Sym_k$, then the left Specht module $S^{\la}=\CA_k\fc_{\la}$ is simple and has the standard basis  $\{d(\ft)\fc_{\la}\mid\ft\in\Std(\la)\}$. Similarly, the set $\{\fc_{\la}d(\ft) \mid\ft\in\Std(\la)\}$ forms a basis for the simple  right Specht module $\widetilde{S^{\la}}=\fc_{\la}\CA_k$ (see, e.g., \cite[6.3c, 6.3e]{EGS}).
We have the following dimension formula (see \cite[\S 4.1]{FH})
\begin{equation}\label{hooklen}
\dim \widetilde{S^{\la}}=\dim  S^{\la}=\dfrac{k!}{h_\lambda}, \quad \text{with \ } h_\lambda={\prod_{i,j} h_{ij}^{\la}},
\end{equation}
where $h_{ij}^{\la}$ is the hook length of the $(i,j)$-box in the Young diagram of shape $\la$.

\begin{rmk}\label{rmkYoungSym}
	For any $\la\vdash k$ and $\ft\in \Std(\la)$, one has $\fc_{\la}(\ft)^2=h_{\la}\fc_{\la}(\ft)$.  The elements $h_{\la}^{-1}\fc_{\la}(\ft)$ are pairwise orthogonal primitive idempotents in $\CA_k$, and the canonical resolution of the identity into sum of the minimal central idempotents in $\CA_k$ is 
	\begin{equation}\label{eqidenres}
	1_{\CA_k}=\sum_{\la\vdash k}\fP_{\la} \quad\text{with}\quad \fP_{\la}=\sum_{\ft\in \Tab (\la)} h_{\la}^{-2} \fc_{\la}(\ft).
	\end{equation}
	where $\Tab (\la)$ denotes the set of all $\la$-tableaux,  see, e.g.,  \cite[Theorem 3.1.24]{J}.
\end{rmk}

\subsubsection{$\Ker F_{2r}^0$ as $\Sym_{2r}$-module}

Now let $k=2r$.
For any partition $\mu=(\mu_1,\mu_2,\dots,\mu_s)$ of $r$,  we denote by $2\mu$ the partition $(2\mu_1,2\mu_2,\dots,2\mu_s)\vdash 2r$, and
call it an \textit{even partition} of $2r$. We use $\mathcal{P}_r$ to denote the set of partitions of $r$ and define $2\mathcal{P}_r:=\{2\mu\;|\;\mu \in \mathcal{P}_r\}$.

Using the notation introduced, we can now give a precise description of the two-sided ideal $I(m, n)$ of $\CA_{2r}$ in \thmref{thm:sft}, which is formulated as 
\begin{eqnarray}\label{eq:I-def}\label{lem:ideal}
I(m,n)=\bigoplus_{2r \dashv \la\supseteq ((2n+1)^{m+1})} \CA_{2r}\fc_{\la}\CA_{2r}.
\end{eqnarray}

Let $K_r\subset\Sym_{2r}$ be the stabiliser of $\hat{A}^{ r}$ as depicted in \figref{Fig3}, i.e., $\hat{A}^{ r}\circ \xi=\hat{A}^{ r}$ for any $\xi\in K_r$.  Then $K_r=\mathbb{Z}_2^r \rtimes \Sym_r$.

\begin{lem}\label{LemIsoSym} As left $\CA_{2r}$-modules,
	$B_{2r}^0(m-2n)\cong \Ind_{\C K_r}^{\CA_{2r}}1_{\C K_r}\cong \bigoplus_{\la \in \tcp} S^{\la}$, where $1_{\C K_r}$ denotes the trivial representation of $\C K_r$.
\end{lem}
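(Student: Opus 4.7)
The plan is to establish the two isomorphisms in sequence.

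\textbf{First isomorphism.} I would first identify $B_{2r}^0(m-2n)$ with the permutation module on $\Sym_{2r}/K_r$. Tracing through the definition of $\ast$ in \eqref{eq:action-1}, for $\sigma\in \Sym_{2r}$ the element $\sigma\ast A = A\circ \sigma^{-1}$ is the diagram obtained by relabelling the bottom vertices of $\sigma^{-1}$ and concatenating with $A$: if $A$ corresponds to the perfect matching $\{\{i_k,j_k\}\}_{k=1}^r$ of $\{1,\dots,2r\}$, then $\sigma\ast A$ corresponds to $\{\{\sigma(i_k),\sigma(j_k)\}\}_{k=1}^r$. Since $\Sym_{2r}$ acts transitively on perfect matchings of a $2r$-element set, and $K_r$ is by definition the stabiliser of $\hat{A}^r$, the basis of $B_{2r}^0(m-2n)$ consisting of Brauer $(2r,0)$-diagrams is $\Sym_{2r}$-equivariantly in bijection with $\Sym_{2r}/K_r$. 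A cardinality check confirms this: the number of Brauer $(2r,0)$-diagrams equals $(2r-1)!! = (2r)!/(2^r r!)= |\Sym_{2r}|/|K_r|$. Therefore
\[
B_{2r}^0(m-2n) \;\cong\; \C[\Sym_{2r}/K_r] \;\cong\; \Ind_{\C K_r}^{\CA_{2r}} 1_{\C K_r}
\]
as left $\CA_{2r}$-modules.

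\textbf{Second isomorphism.} For the decomposition of $\Ind_{\C K_r}^{\CA_{2r}} 1_{\C K_r}$ into Specht modules, I would invoke Frobenius reciprocity, which gives
\[
\bigl[\Ind_{\C K_r}^{\CA_{2r}} 1_{\C K_r}\,:\, S^{\la}\bigr] \;=\; \dim (S^{\la})^{K_r}.
\]
It is a classical result that $(\Sym_{2r},K_r)$ is a Gelfand pair whose spherical representations are indexed precisely by even partitions of $2r$: that is, $\dim (S^{\la})^{K_r}$ equals $1$ when $\la=2\mu$ for some $\mu\vdash r$, and $0$ otherwise. (See for example Macdonald, \emph{Symmetric Functions and Hall Polynomials}, Ch.~VII, where the spherical functions on $\Sym_{2r}/K_r$ are identified with zonal polynomials.) Combining with the first step gives
\[
B_{2r}^0(m-2n) \;\cong\; \Ind_{\C K_r}^{\CA_{2r}} 1_{\C K_r} \;\cong\; \bigoplus_{\la\in 2\CP_r} S^{\la}
\]
as $\CA_{2r}$-modules.

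\textbf{Main obstacle.} The only real subtlety is unwinding the definition of $\ast$ via the antipode $\sharp$ to confirm that the resulting action on Brauer diagrams coincides with the natural permutation action on matchings (rather than its inverse). Once that is checked, the first isomorphism is an orbit-stabiliser computation, and the second reduces to citing the classical Gelfand pair decomposition for $(\Sym_{2r},K_r)$.
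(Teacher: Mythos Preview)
Your proposal is correct and follows essentially the same route as the paper. The paper constructs the explicit $\CA_{2r}$-map $\psi(\sigma\otimes 1_{\C K_r})=\sigma\ast\hat{A}^r=\hat{A}^r\circ\sigma^{-1}$ and checks it is bijective via a coset-representative count, which is exactly your orbit--stabiliser argument phrased slightly differently; for the second isomorphism the paper likewise cites the classical decomposition from Macdonald, Chapter~VII.
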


\begin{proof}
	Note that there is a $\C$-linear isomorphism
	$$\psi: \Ind_{\C K_r}^{\CA_{2r}}1_{\C K_r} \ra B_{2r}^{0}(m-2n)$$
	such that $\psi(\sigma\ot 1_{\C K_r})=\sigma \ast \hat{A}^{ r}=\hat{A}^{ r}\circ \sigma^{-1}$ for all $\sigma\in \Sym_{2r}$.  While the surjectivity of the map is clear, injectivity can be seen by noting that $\Ind_{\C K_r}^{\CA_{2r}}1_{\C K_r}$ has a basis  of the form
	$\{\sigma\ot 1_{\C K_r}\}$ with $\sigma$'s being representatives of distinct
	left cosets of $K_r$ in $\Sym_{2r}$. Thus $\psi$ is indeed an isomorphism. This proves the first isomorphism.
	
	The second isomorphism is a known fact from the representation theory of the symmetric group, see, e.g., \cite[Chapter \uppercase\expandafter{\romannumeral7}, (2.4)]{M}.
\end{proof}

Similarly, we can prove that 
\begin{lem}\label{LemIsoKer}
	As left $\CA_{2r}$-modules, $\Ker F_{2r}^0\cong I(m,n)\ot_{\C K_r}1_{\C K_r}$.
\end{lem}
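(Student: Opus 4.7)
The plan is to obtain the claimed isomorphism by restricting the $\CA_{2r}$-isomorphism $\psi:\Ind_{\C K_r}^{\CA_{2r}}1_{\C K_r}\to B_{2r}^0(m-2n)$ constructed in the proof of \lemref{LemIsoSym}, which on pure tensors satisfies $\psi(\alpha\ot 1)=\alpha\ast \hat{A}^r=\hat{A}^r\circ \alpha^\sharp$, where $\sharp$ denotes the antipode of $\CA_{2r}$. Since \thmref{thm:sft} identifies $\Ker F_{2r}^0$ with $\hat{A}^r\circ I(m,n)$, it is enough to show that $\psi$ sends $I(m,n)\ot_{\C K_r}1_{\C K_r}$ bijectively onto this subspace.

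First I would check that $I(m,n)\ot_{\C K_r}1_{\C K_r}$ genuinely embeds in $\Ind_{\C K_r}^{\CA_{2r}}1_{\C K_r}$ as a left $\CA_{2r}$-submodule. Being a two-sided ideal of $\CA_{2r}$, $I(m,n)$ is closed under right multiplication by $\C K_r\subset \CA_{2r}$, so it is a right $\C K_r$-submodule; and since $\C K_r$ is semisimple (being the group algebra of a finite group over $\C$), the functor $(-)\ot_{\C K_r}1_{\C K_r}$ is exact, so the inclusion $I(m,n)\hookrightarrow \CA_{2r}$ induces the desired injection of left $\CA_{2r}$-modules.

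The next step is to identify the image of this submodule under $\psi$: by construction it equals $\hat{A}^r\circ I(m,n)^\sharp$, so the claim reduces to proving $I(m,n)^\sharp=I(m,n)$. This holds because $\sharp$ is an anti-automorphism of $\CA_{2r}$ that fixes every element of the center (equivalently, every character of $\Sym_{2r}$ satisfies $\chi(\sigma)=\chi(\sigma^{-1})$), and therefore preserves each simple two-sided ideal $\CA_{2r}\fc_\la\CA_{2r}$, i.e.\ each $\la$-isotypic component; summing over $\la\supseteq ((2n+1)^{m+1})$ and invoking the description \eqref{eq:I-def} yields $I(m,n)^\sharp=I(m,n)$. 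This last verification is the only subtle point of the proof; everything else is a straightforward restriction argument. Combining the steps, $\psi$ restricts to an isomorphism $I(m,n)\ot_{\C K_r}1_{\C K_r}\cong \hat{A}^r\circ I(m,n)=\Ker F_{2r}^0$ of left $\CA_{2r}$-modules, as required.
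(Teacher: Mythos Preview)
Your proof is correct and follows essentially the same approach the paper intends: the paper simply writes ``Similarly, we can prove that'' after \lemref{LemIsoSym}, meaning one restricts the isomorphism $\psi$ constructed there. You have spelled out the two details the paper leaves implicit, namely that the inclusion $I(m,n)\hookrightarrow\CA_{2r}$ remains injective after applying $(-)\otimes_{\C K_r}1_{\C K_r}$ (by semisimplicity of $\C K_r$) and that $I(m,n)^\sharp=I(m,n)$ (since $\sharp$ fixes each central idempotent), so that the image is indeed $\hat{A}^r\circ I(m,n)=\Ker F_{2r}^0$.
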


The following notation will be used throughout the paper: 
\[
r_c:=(m+1)(n+1), \quad \lambda_c:=\left((2n+2)^{m+1}\right)\vdash 2r_c.
\]

\begin{thm} \label{LemKer} 
	As a left $\CA_{2r}$-submodule of $B_{2r}^0(m-2n)$, $\Ker F_{2r}^0$ admits a multiplicity-free decomposition
	$$\Ker F_{2r}^0 \cong \bigoplus_{\tcp\ni\la \supseteq \la_c} S^{\la}\quad \text{and} \quad \dim \Ker F_{2r}^0=\sum_{\tcp\ni\la\supseteq \la_c} \frac{(2r)!}{h_{\la}}.$$ Furthermore,  $\Ker F_{2r}^0\ne 0$ if and only if $r\ge r_c$.
\end{thm}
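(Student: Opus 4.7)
The plan is to locate $\Ker F_{2r}^0$ as a direct summand inside the multiplicity-free decomposition of $B_{2r}^0(m-2n)$ supplied by \lemref{LemIsoSym}, and to cut out that summand using central idempotents of $\CA_{2r}$. Since $B_{2r}^0(m-2n)\cong \bigoplus_{\la\in\tcp}S^\la$ is multiplicity-free, every $\CA_{2r}$-submodule is of the form $\bigoplus_{\la\in T}S^\la$ for a unique $T\subseteq\tcp$, where $T=\{\la\in\tcp : \fP_\la\text{ acts nontrivially}\}$. Thus the entire problem reduces to identifying $T$ for $\Ker F_{2r}^0$.

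To compute $T$, I would first combine \thmref{thm:sft} with \eqnref{eq:I-def} to rewrite the two-sided ideal $I(m,n)$ using the minimal central idempotents: $I(m,n)=\bigoplus_{\la\supseteq((2n+1)^{m+1})}\fP_\la\CA_{2r}$, using that each $\CA_{2r}\fc_\la\CA_{2r}$ equals $\fP_\la\CA_{2r}$. A key sublemma to verify is that the antipode $\sharp$ fixes every minimal central idempotent (i.e.\ $\fP_\la^\sharp=\fP_\la$); this follows because every irreducible $\Sym_{2r}$-representation is self-dual, so $\sharp$ permutes the isotypic blocks trivially. Once that is in hand, the definition \eqnref{eq:action-1} of the $\ast$-action gives
\begin{align*}
\Ker F_{2r}^0=\hat{A}^{r}\circ I(m,n)=\sum_{\la\supseteq((2n+1)^{m+1})}\hat{A}^{r}\circ\fP_\la\CA_{2r}=\sum_{\la\supseteq((2n+1)^{m+1})}\fP_\la\ast B_{2r}^0(m-2n),
\end{align*}
and by \lemref{LemIsoSym} each $\fP_\la\ast B_{2r}^0(m-2n)$ equals $S^\la$ when $\la\in\tcp$ and $0$ otherwise. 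Hence $T=\{\la\in\tcp:\la\supseteq((2n+1)^{m+1})\}$.

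The final step is the combinatorial remark that for $\la\in\tcp$ all parts of $\la$ are even, so $\la\supseteq((2n+1)^{m+1})$ is equivalent to $\la\supseteq((2n+2)^{m+1})=\la_c$, yielding the claimed decomposition; the dimension formula is then immediate from \eqnref{hooklen}. For nonvanishing, the minimal even partition containing $\la_c$ is $\la_c$ itself, of size $2r_c$, so $\Ker F_{2r}^0\neq 0$ forces $r\ge r_c$; conversely, for any $r\ge r_c$ one can produce an even partition of $2r$ containing $\la_c$ by padding the first row of $\la_c$ in pairs. The only nontrivial obstacle I anticipate is the invariance $\fP_\la^\sharp=\fP_\la$, which I expect to handle via the self-duality of symmetric-group irreducibles; everything else is bookkeeping with the multiplicity-free decomposition and the even/odd combinatorics.
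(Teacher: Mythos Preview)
Your proposal is correct and follows essentially the same route as the paper: both arguments start from \thmref{thm:sft} to write $\Ker F_{2r}^0=\hat{A}^r\circ I(m,n)$, then project onto isotypic components inside the multiplicity-free decomposition $B_{2r}^0(m-2n)\cong\bigoplus_{\la\in\tcp}S^\la$ of \lemref{LemIsoSym}, and finish with the observation that an even partition contains $((2n+1)^{m+1})$ iff it contains $\la_c$. The only cosmetic difference is that the paper routes the projection through \lemref{LemIsoKer} and the induced-module identification $\CA_{2r}\fc_\la\CA_{2r}\otimes_{\C K_r}1_{\C K_r}=\CA_{2r}\fc_\la\Ind_{\C K_r}^{\CA_{2r}}1_{\C K_r}$, whereas you act directly by the central idempotents $\fP_\la$ and invoke $\fP_\la^\sharp=\fP_\la$; your version is marginally more direct but otherwise the same argument.
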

\begin{proof}
	The dimension formula is an immediate consequence of the first assertion, which we now prove. By \lemref{LemIsoKer}, we have
	\[
	\begin{aligned}
	\Ker F_{2r}^0&\cong I(m,n)\ot_{\C K_r}1_{\C K_r}=\bigoplus_{2r \dashv \la\supseteq ((2n+1)^{m+1})} \CA_{2r}c_{\la}\CA_{2r} \ot_{\C K_r}1_{\C K_r}\\
	&= \bigoplus_{2r \dashv \la\supseteq ((2n+1)^{m+1})} \CA_{2r}c_{\la}\Ind_{\C K_r}^{\CA_{2r}}1_{\C K_r}.
	\end{aligned}
	\]
	Using \lemref{LemIsoSym} and  orthogonality of elements $\fc_{\la}(\ft)$, we can express the far right hand side of the above equation as
	\[
	\bigoplus_{2r \dashv \la\supseteq ((2n+1)^{m+1})}  \bigoplus_{\mu \in \tcp} \CA_{2r}c_{\la} S^{\mu}\quad \cong
	\bigoplus_{\tcp\ni\la \supseteq \la_c} S^{\la}.
	\]
	This proves the claim on the decomposition of $\Ker F_{2r}^0$. The dimension formula follows from this decomposition and \eqref{hooklen}.
	
	Now  $\la\in \tcp$  and $\la\supseteq \la_c$ implies that $r\geq r_c$. If  $r\ge r_c$, there always exists
	$\la\in \tcp$ such that $\la\supseteq \la_c$. In particular, when
	$r=r_c$, the only such even partition is $\la_c$ itself, and we have $\Ker F_{r_c}^{r_c}\cong\Ker F_{2r_c}^0\cong S^{\la_c}$.
\end{proof}

Since $\Ker F_{r}^r\cong\Ker F_{2r}^0$ as $\CA_{2r}$-modules by \corref{CorIsoKer}, we have the following result.
\begin{thm}\label{thmKeriso}
	The kernel of $F_{r}^r$ is nonzero if and only if $r\ge r_c$, and in that case
	\[
	\begin{aligned}
	\Ker F_r^r&\cong \bigoplus_{\tcp\ni \la \supseteq \la_c} S^{\la}
	\text{ \ and \ }
	\dim \Ker F_{r}^{r}&=\sum_{\tcp\ni\la\supseteq \la_c} \frac{(2r)!}{h_{\la}}.
	\end{aligned}
	\]
\end{thm}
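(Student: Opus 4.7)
The statement is essentially a direct corollary of the two immediately preceding results, so my plan is short. The key observation is that Corollary \ref{CorIsoKer} already gives an isomorphism $\Ker F_{2r}^0 \cong \Ker F_r^r$ of left $\CA_{2r}$-modules (via the maps $\bU_r$ and $\mathbb{A}_r$ which were shown to intertwine the $\CA_{2r}$-actions in Lemma \ref{AUmaps}, and which descend to the kernels by Lemma \ref{lem:hom-iso-1}). Meanwhile, Theorem \ref{LemKer} already delivers the full decomposition, dimension formula, and the nonvanishing criterion for $\Ker F_{2r}^0$.

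Therefore, the plan is simply to transport the conclusions of Theorem \ref{LemKer} across the isomorphism of Corollary \ref{CorIsoKer}. Concretely, I would write: by Corollary \ref{CorIsoKer}, $\Ker F_r^r$ is isomorphic to $\Ker F_{2r}^0$ as $\CA_{2r}$-modules, so in particular $\Ker F_r^r \neq 0$ if and only if $\Ker F_{2r}^0 \neq 0$; by Theorem \ref{LemKer}, the latter holds exactly when $r \geq r_c$. When $r \geq r_c$, the same isomorphism transports the multiplicity-free decomposition
\[
\Ker F_{2r}^0 \cong \bigoplus_{\tcp \ni \la \supseteq \la_c} S^{\la}
\]
to $\Ker F_r^r$, and in particular the dimension formula
\[
\dim \Ker F_r^r = \dim \Ker F_{2r}^0 = \sum_{\tcp \ni \la \supseteq \la_c} \frac{(2r)!}{h_\la}
\]
follows immediately, where the second equality uses the standard hook-length formula \eqref{hooklen} for Specht modules, as already employed in the proof of Theorem \ref{LemKer}.

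There is no real obstacle here; the entire content was done upstream. The only thing worth emphasising in writing is that the isomorphism of Corollary \ref{CorIsoKer} is by construction one of $\CA_{2r}$-modules (not merely of vector spaces), which is what allows the Specht-module decomposition, and not just the dimension, to be transferred. For that reason I would phrase the proof as a one- or two-line appeal: "This follows immediately from Theorem \ref{LemKer} together with the $\CA_{2r}$-module isomorphism $\Ker F_r^r \cong \Ker F_{2r}^0$ of Corollary \ref{CorIsoKer}."
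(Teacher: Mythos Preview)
Your proposal is correct and matches the paper's own proof essentially verbatim: the paper simply states that since $\Ker F_r^r \cong \Ker F_{2r}^0$ as $\CA_{2r}$-modules by Corollary~\ref{CorIsoKer}, the result follows from Theorem~\ref{LemKer}. There is nothing to add.
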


\begin{coro}\label{CorInj}
	The map $F_r^r: B_r^r(m-2n) \longrightarrow \End_G(V^{\ot r})$
	is an algebra isomorphism if and only if $r<r_c$.
\end{coro}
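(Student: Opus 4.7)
The plan is extremely short: this corollary falls out immediately from the two results already in hand, so the proof is essentially a one-line logical combination. The first ingredient is \corref{corofft}, which states that $F_r^r : B_r^r(m-2n) \to \End_G(V^{\otimes r})$ is surjective. Since a surjective linear map between finite-dimensional vector spaces is an isomorphism precisely when its kernel is trivial, and $F_r^r$ is an algebra homomorphism, $F_r^r$ is an algebra isomorphism if and only if $\Ker F_r^r = 0$.

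The second ingredient is \thmref{thmKeriso}, which characterises exactly when the kernel vanishes: $\Ker F_r^r \neq 0$ if and only if $r \geq r_c = (m+1)(n+1)$. Contrapositively, $\Ker F_r^r = 0$ if and only if $r < r_c$. Combining the two observations yields the claim.

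There is no real obstacle here, and nothing further to carry out: all the substantive work (translating $\Ker F_r^r$ into a $\CA_{2r}$-module via $\bU_r$, decomposing it into Specht modules, and identifying the minimal $r$ for which an even partition containing $\la_c$ exists) has already been discharged in the proof of \thmref{LemKer} and its consequence \thmref{thmKeriso}. If I wanted to highlight the logical flow in the write-up, I would structure it as: \emph{(i)} invoke \corref{corofft} to reduce the isomorphism question to the vanishing of $\Ker F_r^r$; \emph{(ii)} invoke \thmref{thmKeriso} to translate this into the inequality $r < r_c$; \emph{(iii)} conclude.
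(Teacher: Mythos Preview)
Your proof is correct and follows essentially the same approach as the paper's own proof, which simply cites \thmref{thm:fft-cat} (surjectivity) and \thmref{thmKeriso} (nonvanishing of the kernel iff $r \ge r_c$). Your use of \corref{corofft} in place of \thmref{thm:fft-cat} is an equivalent and slightly more direct citation.
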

\begin{proof}
	This follows from \thmref{thm:fft-cat} and \thmref{thmKeriso}.
\end{proof}

Elements of $\Ker F_{r_c}^{r_c}$ have the following annihilation property. 

\begin{prop}\label{AnnThm}
	Any $\Psi\in \Ker F_{r_c}^{r_c}$ satisfies $e_i\Psi=\Psi e_i=0$ for any generator $e_i\in B_{r_c}(m-2n)$.
\end{prop}
\begin{proof}
	In the composite Brauer diagram $e_i\Psi$, we can find a subdiagram which is exactly obtained from $\Psi$ by connecting top vertex $i$ with the top vertex $i+1$. Denoting this subdiagram by $\Psi_{i,i+1}$, we have $\Psi_{i,i+1}\in \Ker F_{r_c}^{r_c-2}\subset B_{r_c}^{r_c-2}(m-2n)$. By Remark \ref{rmk:iso-kernels}, we have $B_{r_c}^{r_c-2}(m-2n)\cong B_{2r_c-2}^0(m-2n)$ and  $\Ker F_{r_c}^{r_c-2}\cong \Ker F_{2r_c-2}^0$. However, $ \Ker F_{2r_c-2}^0=0$ by  \thmref{LemKer}. Therefore,  $\Psi_{i,i+1}=0$ and hence $e_i\Psi=0$. Another case $\Psi e_i=0$ can be proved similarly.
\end{proof}

\begin{coro}
	Let $D$ be any Brauer diagram in $B_{r_c}(m-2n)$ which has horizontal edges, i.e., $D$ is in the 2-sided ideal of $B_{r_c}(m-2n)$ generated by $e_1$. Then $D\Psi=\Psi D=0$ for any $\Psi\in \Ker F_{r_c}^{r_c}$. 
\end{coro}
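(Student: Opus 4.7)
The plan is to deduce this directly from Proposition \ref{AnnThm} together with the fact that $\Ker F_{r_c}^{r_c}$ is itself a two-sided ideal of the Brauer algebra $B_{r_c}(m-2n)$ (being the kernel of the algebra homomorphism $F_{r_c}^{r_c}$). These two ingredients combine rather transparently, so I expect no genuine obstacle.

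First, I would invoke the hypothesis on $D$: since $D$ lies in the two-sided ideal generated by $e_1$, we may write
\[
D \;=\; \sum_{k} a_k\, e_1\, b_k
\]
for suitable $a_k, b_k \in B_{r_c}(m-2n)$. (One could equally well use the more general fact that the span of Brauer diagrams with at least one horizontal edge coincides with the two-sided ideal generated by $e_1$, but the presentation above is all that is needed.)

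Next, given $\Psi \in \Ker F_{r_c}^{r_c}$, I would compute
\[
D\Psi \;=\; \sum_k a_k\, e_1\, (b_k \Psi).
\]
For each $k$, the element $b_k\Psi$ lies in $\Ker F_{r_c}^{r_c}$ because this kernel is a two-sided ideal. Applying Proposition \ref{AnnThm} to $b_k\Psi$ with the generator $e_1$ yields $e_1(b_k\Psi)=0$, and consequently $D\Psi = 0$. The argument for $\Psi D = 0$ is symmetric: write
\[
\Psi D \;=\; \sum_k (\Psi a_k)\, e_1\, b_k,
\]
observe that $\Psi a_k \in \Ker F_{r_c}^{r_c}$ by the two-sided ideal property, and then apply the other half of Proposition \ref{AnnThm} to conclude $(\Psi a_k)e_1 = 0$ for each $k$.

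The only subtlety worth flagging is the initial identification of the ideal of diagrams with horizontal edges with the two-sided ideal generated by $e_1$; this is standard for Brauer algebras (all $e_i$ are conjugate to $e_1$ under the symmetric group $\Sym_{r_c} \subset B_{r_c}(m-2n)$, and any diagram with a horizontal edge factors through some $e_i$), so it warrants at most a one-line remark rather than a detailed argument.
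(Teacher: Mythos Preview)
Your proof is correct and is precisely the intended argument: the paper states this result as an immediate corollary of Proposition~\ref{AnnThm} without giving a separate proof, and your elaboration---writing $D=\sum_k a_k e_1 b_k$, using that $\Ker F_{r_c}^{r_c}$ is a two-sided ideal so that $b_k\Psi$ (resp.\ $\Psi a_k$) again lies in the kernel, and then applying Proposition~\ref{AnnThm}---is exactly how one unpacks it. A minor simplification is that Proposition~\ref{AnnThm} already covers all $e_i$, so one may factor a single Brauer diagram with horizontal edges as $D = a\, e_i\, b$ for some $i$ and avoid the sum, but this is cosmetic.
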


\subsection{A basis for $\Ker F_r^r$ in minimal degree}\label{BasisKer}
We shall construct a basis for  $\Ker F_{r_c}^{r_c}$.

\subsubsection{The Lehrer-Zhang element}
Given any Brauer diagram $A\in B_{2r}^0(m-2n)$, there exists
$\sigma\in \Sym_{2r}$ such that $A=\hat{A}^{ r}\circ \sigma=\sigma^{-1}\ast \hat{A}^{ r} $, and hence we have the following relation in $B_r^r(m-2n)$,
\[
\bU_r(A)=\sigma^{-1}\ast \bU_r(\hat{A}^{ r})=\sigma^{-1}\ast I_r,
\]
Therefore $I_r$ generates $B_r^r(m-2n)$ as an $\CA_{2r}$-module.

The above simple fact particularly enables us to gain a conceptual understanding  of the element of the Brauer algebra introduced in \cite[Section 5.2]{LZ4}. For each $i$ such that $0\leq i\leq [\frac{r}{2}]$, let $E(i)\in B_r^r(m-2n)$ be defined by $E(i)=\prod_{j=1}^ie_{2j-1}$, where $E(0)=I_r$ by convention. We define
$\Xi_r(i)=x_{(r)}E(i)x_{(r)},$
which is represented pictorially as follows:
	\begin{center}	
		\begin{picture}(100, 100)(-20,-40)
		\put(5, 40){\line(0, 1){20}}

		\put(23, 50){...}
		\put(50, 40){\line(0, 1){20}}

		\put(0, 20){\line(1, 0){55}}
		\put(0, 20){\line(0, 1){20}}
		\put(55, 20){\line(0, 1){20}}
		\put(0, 40){\line(1, 0){55}}
		\put(25, 28){${r}$}
		
		\put(39, 20){\line(0, -1){20}}
		\put(40, 10){...}
		\put(52, 20){\line(0, -1){20}}
		
		\qbezier(3, 20)(8, 2)(13, 20)
		\put(15, 16){...}
		\qbezier(26, 20)(31, 2)(36, 20)
		\put(17, 8){\tiny$i$}
		\qbezier(3, 0)(8, 18)(13, 0)
		\put(15, 4){...}
		\qbezier(26, 0)(31, 18)(36, 0)

		\put(0, 0){\line(1, 0){55}}
		\put(0, 0){\line(0, -1){20}}
		\put(55, 0){\line(0, -1){20}}
		\put(0, -20){\line(1, 0){55}}
		\put(25, -12){${r}$}
		\put(5, -20){\line(0, -1){20}}
		\put(23, -30){...}
		\put(50, -20){\line(0, -1){20}}
		\put(55, -40){.}
		\end{picture}
	\end{center} 
We define the Lehrer-Zhang element by
\begin{equation}\label{eqLZelmt}
\Phi_{LZ}(r):= \sum_{i=0}^{[\frac{r}{2}]}c_i\Xi_r(i),\quad
c_i =((2^i i! )^2 (r-2i)!)^{-1}.
\end{equation}

\begin{rmk}
 The original Lehrer-Zhang element defined in \cite[Section 5.2]{LZ4} is the element $\Phi_{LZ}(n+1)$ in the Brauer algebra $B_{n+1}^{n+1}(-2n)$, i.e., taking $r=n+1$ and $m=0$ in our case. This element turns out to be an idempotent and remarkably generates the kernel of the surjective algebra homomorphism $F_r^r: B_{r}^r(-2n)\rightarrow \End_{\Sp(V)}(V^{\ot r})$ when $r\geq n+1$.
\end{rmk}

We obtain a new description of Lehrer-Zhang element using group actions 

\begin{lem}\label{LemSymAct}
Let $x_{(2r)}=\sum_{\sigma\in \Sym_{2r}}\sigma\in \CA_{2r}$ be the Young symmetriser associated with the
one row Young diagram of $2r$ boxes. Then
\[
\Phi_{LZ}(r)=\sum_D D=\frac{x_{(2r)} \ast I_r}{2^rr!},
\]
where the sum is over all the  Brauer diagrams in $B_r^r(m-2n)$.
\end{lem}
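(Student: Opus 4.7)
The plan is to verify the two asserted equalities separately and then combine them.

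For the second equality $\sum_D D = (x_{(2r)}\ast I_r)/(2^r r!)$, I would unwind the definition of the action in \eqref{eq:action-2}. Directly from the construction of $\mathbb{A}_r$ one has $\mathbb{A}_r(I_r)=\hat{A}^r$, and clearly $x_{(2r)}^\sharp = x_{(2r)}$, so
\[
x_{(2r)}\ast I_r \;=\; \mathbb{U}_r\bigl(\hat{A}^r\circ x_{(2r)}\bigr) \;=\; \sum_{\sigma\in \Sym_{2r}} \mathbb{U}_r(\hat{A}^r\circ \sigma).
\]
As in the proof of \lemref{LemIsoSym}, the stabiliser of $\hat{A}^r$ under right $\Sym_{2r}$-multiplication is exactly $K_r$ with $|K_r|=2^r r!$, and every diagram of $B_{2r}^0(m-2n)$ arises as $\hat{A}^r\circ\sigma$ for precisely $|K_r|$ values of $\sigma$. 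Since $\mathbb{U}_r$ is a bijection between the diagram bases of $B_{2r}^0(m-2n)$ and $B_r^r(m-2n)$, the sum collapses to $2^r r!\sum_D D$, giving the claim after dividing.

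For the equality $\Phi_{LZ}(r)=\sum_D D$, I would argue by a type-counting classification. Assign to each $D\in B_r^r(m-2n)$ its \emph{type} $i$, the number of horizontal edges in its top row (which must equal the number in the bottom row, with $r-2i$ through strands). The diagram $E(i)=e_1 e_3\cdots e_{2i-1}$ is itself of type $i$, and because concatenating a Brauer diagram with a permutation never creates closed loops,
\[
\Xi_r(i)\;=\;x_{(r)}E(i)x_{(r)}\;=\;\sum_{\sigma,\tau\in \Sym_r}\sigma\, E(i)\,\tau
\]
is a positive-integer linear combination of type-$i$ Brauer diagrams only. Moreover, the $\Sym_r\times\Sym_r$-action $(\sigma,\tau)\cdot D:=\sigma D\tau$ is transitive on the set of type-$i$ diagrams, so every type-$i$ diagram appears in $\Xi_r(i)$ with the same multiplicity $(r!)^2/N_i$, where $N_i$ denotes the number of type-$i$ diagrams.

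A direct count yields $N_i=(r!)^2/\bigl(2^{2i}(i!)^2(r-2i)!\bigr)$ (choose an unordered pairing on the top row, likewise on the bottom, then match the through strands), whence the common multiplicity is $(2^i i!)^2 (r-2i)! = c_i^{-1}$. Therefore $c_i\Xi_r(i)$ is exactly the sum of all type-$i$ diagrams, and summing over $i\in\{0,1,\dots,\lfloor r/2\rfloor\}$ yields $\Phi_{LZ}(r)=\sum_D D$. The main technical step is pinning down the stabiliser of $E(i)$ in $\Sym_r\times\Sym_r$ precisely: the $i$ top pairs may be permuted among themselves and admit a swap within each pair ($2^i i!$ choices), the same holds for the $i$ bottom pairs ($2^i i!$ choices), and these choices force a single correlated permutation of the $r-2i$ through strands ($(r-2i)!$ choices). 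This is routine diagrammatic bookkeeping once conventions for left/right composition are fixed, but it is the spot where most care is required.
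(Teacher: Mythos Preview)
Your proof is correct. For the second equality you argue exactly as the paper does, via the stabiliser $K_r$ of $\hat{A}^r$ and the bijection $\mathbb{U}_r$. For the first equality the paper takes a shortcut and simply invokes \cite[Lemma~5.2]{LZ4} (remarking that the argument there is independent of the particular values of $r$ and $m$), whereas you supply a self-contained direct proof: classify Brauer diagrams by type, use transitivity of the $\Sym_r\times\Sym_r$-action on each type, and compute the common multiplicity via an orbit--stabiliser count. Your route trades an external citation for a short combinatorial computation; both are entirely adequate here. One small wording issue in the spot you yourself flagged: the phrase ``these choices force a single correlated permutation of the $r-2i$ through strands ($(r-2i)!$ choices)'' reads contradictorily. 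What you mean is that $\sigma$ restricted to the $r-2i$ through-strand top endpoints may be any of $(r-2i)!$ permutations, and then $\tau$ on the corresponding bottom endpoints is uniquely determined; the total stabiliser size $(2^i i!)^2(r-2i)!$ is correct.
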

\begin{proof}
The first equality follows from \cite[Lemma 5.2]{LZ4}, which is proved for the case $r=n+1$ and $m=0$. However, it can be easily seen that the proof is actually independent of $r$ and $m$. So it remains to prove the second equality.

Recall that  the subgroup $K_r=\mathbb{Z}_2^r \rtimes \Sym_r$ of $\Sym_{2r}$ is the stabiliser of $\hat{A}^r$. Denote by $R$  the set of representatives of the right  coset $K_r$ in $\Sym_{2r}$, we have
\[
\begin{aligned}
x_{(2r)}\ast I_r &=\bU_r(\hat{A}^{ r}\circ x_{(2r)})= \bU_r(\hat{A}^{ r}\circ \sum_{\sigma\in \Sym_{2r}}\sigma)\\
&=\sum_{\tau \in R}\bU_r(\hat{A}^{ r} \circ \sum_{\xi\in K_r}\xi\tau )
=|K_r|\sum_{\tau\in R}\bU_r(\hat{A}^{ r}\circ \tau).
\end{aligned}
\]
Since $\bU_r$ is an $\CA_{2r}$-isomorphism and $|R|=(2r-1)\times(2r-3)\cdots 3\times1=\dim B_r^r(m-2n)$, we obtain $x_{(2r)}\ast  I_r =2^rr!\sum_D D$ as required.
\end{proof}

\begin{rmk}
Both \cite[Lemma 5.2]{LZ4}  and \lemref{LemSymAct} are valid in $B_r^r(\delta)$ for arbitrary $\delta$.
\end{rmk}

\subsubsection{Labelling of Brauer diagrams} \label{sect:lablelling}
To keep track of the changes of Brauer diagrams under the symmetric group actions, it is useful to label their vertices and regard the symmetric group as the permutation group on the set of labels.   This will be useful in Section \ref{sect:basis-min}.

\begin{figure}[h]
	\begin{center}
		\begin{picture}(190, 40)(0,0)
		\qbezier(30, 10)(80, 50)(130, 10)
		\qbezier(50, 10)(60, 80)(70, 10)
		
		\put(135, 20){$...$}
		
		\qbezier(90, 10)(120, 80)(160, 10)
		
		\qbezier(110, 10)(145, 50)(180, 10)
		\put(28,0){\tiny$1$}
		\put(48,0){\tiny $2$}
		\put(68,0){\tiny $3$}
		\put(88,0){\tiny $4$}
		\put(108,0){\tiny $5$}
		\put(128,0){\tiny $6$}
		\put(150,0){\tiny $2r-1$}
		\put(178,0){\tiny $2r$}
		\end{picture}
	\end{center}
	\caption{A diagram $A \in B_{2r}^0(m-2n)$}
	\label{Arcs}
\end{figure}
For any Brauer diagram $A$ in $B_{2r}^0(m-2n)$, we label its vertices by $1, 2, \dots, 2r$ from left to right as indicated in Figure \ref{Arcs}. Since $\bU_r$ is bijective, any Brauer diagram in $B_r^r(m-2n)$ is of the form $\bU_r(A)$ for a $(2r, 0)$-diagram $A$. Thus the labelling of the vertices of $A$ induces a labelling for
the vertices of $\bU_r(A)$ such that the vertices in the top row are numbered by the odd integres $1, 3, \dots 2r-1$, and those in the bottom row by the even integers
$2, 4, \dots, 2r$.

Given the elementary transposition $s_i=(i,i+1)\in \Sym_{2r}$ and a Brauer diagram $D\in B_{r}^r(m-2n)$, it follows from definition \eqref{eq:action-2} that the left action of $s_i$ on $D$ is to switch the positions of endpoints in $D$ labelled by $i$ and $i+1$, viz.:
	\begin{center}
		\begin{tikzpicture}
		\node at (-0.4,0.35){$s_i\ast$};		
		\draw (0,0) rectangle (2,0.7);
		\node at (1,0.35){$D$};
		
		\draw (0.1,0.7) -- (0.1,1.4);
		\draw (0.5,0.7) -- (0.5,1.4);	
		\draw (1.9,0.7) -- (1.9,1.4);
		\node at (1.2,1.05){$\dots$};
		
		\node at (0.1,1.6){$\scriptstyle{1}$};  	
		\node at (0.5,1.6){$\scriptstyle{3}$};
		\node at (1.9,1.6){$\scriptstyle{2r-1}$};
		
		\draw (0.1,0) -- (0.1,-0.7);
		\draw (0.5,0) -- (0.5,-0.7);	
		\draw (1.9,0) -- (1.9,-0.7);
		\node at (1.2,-0.35){$\dots$};
		
		\node at (0.1,-0.9){$\scriptstyle{2}$};  	
		\node at (0.5,-0.9){$\scriptstyle{4}$};
		\node at (1.9,-0.9){$\scriptstyle{2r}$};
		
		\node at (2.75,0.35) {$=$};
		
		\draw (4,0) rectangle (6,0.7);
		\node at (5,0.35){$D$};
		\draw (4.1,0.7) -- (4.1,1.4);
		\draw (4.5,0.7) -- (4.5,1.4);	
		\draw (5.9,0.7) -- (5.9,1.4);
		\node at (5.2,1.05){$\dots$};
		
		\node at (4.1,1.6){$\scriptstyle{1}$};  	
		\node at (4.5,1.6){$\scriptstyle{3}$};
		\node at (5.3,1.6){$\scriptstyle{i}$};
		\node at (5.9,1.6){$\scriptstyle{2r-1}$};
		
		\draw (4.1,0) -- (4.1,-0.7);
		\draw (4.5,0) -- (4.5,-0.7);	
		\draw (5.9,0) -- (5.9,-0.7);
		\node at (5.2,-0.45){$...$};
		
		\node at (4.1,-0.9){$\scriptstyle{2}$};  	
		\node at (4.5,-0.9){$\scriptstyle{4}$};
		\node at (5.3,-0.9){$\scriptstyle{i+1}$};
		\node at (5.9,-0.9){$\scriptstyle{2r}$};		
		
		\draw[-,thick] (5.3,0.7) to [out=80,in=100] (6.5,0.7)
		to  (6.5,0) to (5.3,-0.7);
		\draw[-,thick] (5.3,0) to [out=260,in=-80] (3.5,0)
		to (3.5,0.7) to (5.3,1.4);
		
		\node at (8.0,0.35){\text{if $i$ is odd,}};	   	  	 	  	  	
		\end{tikzpicture}
	\end{center}
	
	\begin{center}
		\begin{tikzpicture}
		\node at (-0.4,0.35){$s_i\ast$};		
		\draw (0,0) rectangle (2,0.7);
		\node at (1,0.35){$D$};
		
		\draw (0.1,0.7) -- (0.1,1.4);
		\draw (0.5,0.7) -- (0.5,1.4);	
		\draw (1.9,0.7) -- (1.9,1.4);
		\node at (1.2,1.05){$\dots$};
		
		\node at (0.1,1.6){$\scriptstyle{1}$};  	
		\node at (0.5,1.6){$\scriptstyle{3}$};
		\node at (1.9,1.6){$\scriptstyle{2r-1}$};
		
		\draw (0.1,0) -- (0.1,-0.7);
		\draw (0.5,0) -- (0.5,-0.7);	
		\draw (1.9,0) -- (1.9,-0.7);
		\node at (1.2,-0.35){$\dots$};
		
		\node at (0.1,-0.9){$\scriptstyle{2}$};  	
		\node at (0.5,-0.9){$\scriptstyle{4}$};
		\node at (1.9,-0.9){$\scriptstyle{2r}$};
		
		\node at (2.75,0.35) {$=$};
		
		\draw (4,0) rectangle (6,0.7);
		\node at (5,0.35){$D$};
		\draw (4.1,0.7) -- (4.1,1.4);
		\draw (4.9,0.7) -- (4.9,1.4);	
		\draw (5.9,0.7) -- (5.9,1.4);
		\node at (4.5,1.3){$...$};
		
		\node at (4.1,1.6){$\scriptstyle{1}$};  	
		\node at (4.8,1.6){$\scriptstyle{i-1}$};
		\node at (5.4,1.6){$\scriptstyle{i+1}$};
		\node at (6.1,1.6){$\scriptstyle{2r-1}$};
		\node at (5.4,1.1){$\dots$};
		
		\draw (4.1,0) -- (4.1,-0.7);
		\draw (4.5,0) -- (4.5,-0.7);	
		\draw (5.9,0) -- (5.9,-0.7);
		\node at (5.2,-0.35){$\dots$};
		
		\node at (4.1,-0.9){$\scriptstyle{2}$};  	
		\node at (4.5,-0.9){$\scriptstyle{4}$};
		\node at (4.9,-0.9){$\scriptstyle{i}$};
		\node at (5.9,-0.9){$\scriptstyle{2r}$};
		\node at (8.0,0.35){\text{if $i$ is even.}};
		
		\draw[-,thick] (5.3,0.7) to [out=80,in=100] (6.5,0.7)
		to  (6.5,0) to (4.9,-0.7);
		\draw[-,thick] (4.9,0) to [out=260,in=-80] (3.5,0)
		to (3.5,0.7) to (5.3,1.4);    		 		
		\end{tikzpicture}
	\end{center}

\begin{exam}\label{examDact}
	{\rm 
	Let $r=3$ and $D$ be the diagram as shown below. Then we have $(12)*D$ and $(45)*D$ depicted as follows:
	\begin{center}
			\begin{picture}(360,60)(-10,-10)
			\put(-30,12){$D=$}
			\qbezier(0,30)(30,10)(60,30)
			\qbezier(30,30)(15,15)(0,0)
			\qbezier(30,0)(45,30)(60,0)	
			
			\put(-2,32){\tiny$1$}
			\put(-2,-8){\tiny$2$}
			\put(28,32){\tiny$3$}
			\put(28,-8){\tiny$4$}
			\put(58,32){\tiny$5$}
			\put(58,-8){\tiny$6$}
			\put(66,0)	{,}	
			
			\put(80,12){$(12)\ast D$=}
			
			\qbezier(143,30)(158,10)(173,30)
			\qbezier(203,30)(173,15)(143,0)
			\qbezier(173,0)(188,30)(203,0)	
			
			\put(141,32){\tiny$1$}
			\put(141,-8){\tiny$2$}
			\put(171,32){\tiny$3$}
			\put(171,-8){\tiny$4$}
			\put(201,32){\tiny$5$}
			\put(201,-8){\tiny$6$}			
			\put(210,0)	{,}	
			
        	\put(220,12){$(45)\ast D=$}
	
       	\qbezier(283,30)(298,15)(313,0)
	    \qbezier(313,30)(298,15)(283,0)
	    \qbezier(343,0)(343,15)(343,30)	
	
	    \put(281,32){\tiny$1$}
	    \put(281,-8){\tiny$2$}
	    \put(311,32){\tiny$3$}
	    \put(311,-8){\tiny$4$}
	    \put(341,32){\tiny$5$}
	    \put(341,-8){\tiny$6$}			
	    \put(350,0)	{.}					
			
			\end{picture}		
	\end{center}
   }
\end{exam}

\begin{rmk}\label{rmkoddevenact}
  We deduce from  the above diagrammatic definition of group action that 
   $$(\sigma_1,\sigma_2)\ast D=\sigma_1\circ D\circ \sigma_2^{-1}$$
   for any  $(\sigma_1,\sigma_2)\in\Sym\{1,3,\dots2r-1\}\times \Sym\{2,4,\dots,2r\}$ and $D\in B_r^r(m-2n)$.
\end{rmk}

\subsubsection{A basis for the kernel}\label{sect:basis-min}
If $r=r_c$,  then $\la_c:=((2n+2)^{m+1})$ is the only partition of $2r$ which contains $((2n+2)^{m+1})$.  We have the following key lemma.

 \begin{lem}\label{LemyX}
 The element $\hat{A}^{ r_c}\circ \fc_{\la_c}$ in $\Ker F_{2r_c}^0$ is nonzero.
\end{lem}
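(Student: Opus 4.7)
The plan is to compute directly the coefficient of the basis vector $\hat A^{r_c}$ when $\hat A^{r_c}\circ\fc_{\la_c}$ is expressed as a $\C$-linear combination of Brauer $(2r_c,0)$-diagrams, and to show that this coefficient is a positive integer.  The containment $\hat A^{r_c}\circ \fc_{\la_c}\in\Ker F^{0}_{2r_c}$ is immediate from \thmref{thm:sft}: since $\la_c=((2n+2)^{m+1})\supseteq((2n+1)^{m+1})$, we have $\fc_{\la_c}\in\CA_{2r_c}\fc_{\la_c}\CA_{2r_c}\subseteq I(m,n)$, and $\Ker F^{0}_{2r_c}=\hat A^{r_c}\circ I(m,n)$; so the substantive part of the claim is the nonvanishing.

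First I expand $\fc_{\la_c}=x_{\la_c}y_{\la_c}=\sum_{(\rho,\tau)\in R\times C}\epsilon(\tau)\,\rho\tau$, where $R=R(\ft^{\la_c})$ and $C=C(\ft^{\la_c})$. Since $R\cap C=\{e\}$, each element of $RC\subseteq\Sym_{2r_c}$ has a unique such factorisation, and for $\sigma\in\Sym_{2r_c}$ one has $\hat A^{r_c}\circ\sigma=\hat A^{r_c}$ precisely when $\sigma\in K_{r_c}$. Hence the coefficient of $\hat A^{r_c}$ in $\hat A^{r_c}\circ\fc_{\la_c}$ equals
\[
N\;=\;\sum_{(\rho,\tau)\in R\times C,\ \rho\tau\in K_{r_c}}\epsilon(\tau).
\]

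The crux will be the following combinatorial claim: for $(\rho,\tau)\in R\times C$, $\rho\tau\in K_{r_c}$ if and only if both $\rho\in R\cap K_{r_c}$ and $\tau\in C\cap K_{r_c}$.  To prove this I intend to exploit the explicit layout of $\ft^{\la_c}$: the entry $k(2n+2)+c$ sits in row $k$, column $c$ (for $0\le k\le m$, $1\le c\le 2n+2$), and each matching pair of $\hat A^{r_c}$ takes the form $\{k(2n+2)+(2j-1),\,k(2n+2)+2j\}$, with both entries in the same row $k$ at adjacent columns $2j-1,2j$. Writing $\tau$ via its column-wise action $(k,c)\mapsto(\sigma_c(k),c)$ with $\sigma_c\in\Sym_{m+1}$, the $\tau$-image of such a pair is $\{(\sigma_{2j-1}(k),2j-1),(\sigma_{2j}(k),2j)\}$. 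Because $\rho\in R$ preserves each row, $\rho\tau$ can send this to another matching pair only if its two elements already lie in the same row, which forces $\sigma_{2j-1}(k)=\sigma_{2j}(k)$ for every $k$, hence $\sigma_{2j-1}=\sigma_{2j}$ for each $j$; this is exactly the condition $\tau\in C\cap K_{r_c}$. It then follows that $\rho=(\rho\tau)\tau^{-1}\in R\cap K_{r_c}$.

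To finish, every $\tau\in C\cap K_{r_c}$ acts on columns $2j-1$ and $2j$ by the same $\sigma_j\in\Sym_{m+1}$, so $\epsilon(\tau)=\prod_{j=1}^{n+1}\epsilon(\sigma_j)^2=+1$. Consequently
\[
N=|R\cap K_{r_c}|\cdot|C\cap K_{r_c}|=\bigl(2^{n+1}(n+1)!\bigr)^{m+1}\bigl((m+1)!\bigr)^{n+1}>0,
\]
which shows that $\hat A^{r_c}\circ\fc_{\la_c}\neq 0$. The main (and essentially only) obstacle is the combinatorial claim, and what makes it succeed is the row-column alignment of the pairs of $\hat A^{r_c}$ inside $\ft^{\la_c}$ — each pair spans two consecutive columns within a single row, which couples the column permutations pairwise and prevents any cancellation of signs.
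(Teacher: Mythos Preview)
Your argument is correct and is a genuinely different route from the paper's proof. The paper transports the question to $B_{r_c}^{r_c}(m-2n)$ via the isomorphism $\bU_{r_c}$, showing that $\Phi=y_{\la_c}x_{\la_c}\ast I_{r_c}$ equals $y_{\mu_c}\circ X_{\la_c}\circ y_{\mu_c}$ with $X_{\la_c}$ proportional to $\Phi_{LZ}(n+1)^{\otimes(m+1)}$; nonvanishing is then read off from the ``leading'' summand with no horizontal edges, which cannot cancel against the remaining terms. Your proof stays entirely on the $(2r_c,0)$ side and extracts the coefficient of the single diagram $\hat A^{r_c}$ by the combinatorial observation that the pairs of $\hat A^{r_c}$ are aligned with the rows of $\ft^{\la_c}$, forcing the column permutations $\sigma_{2j-1}$ and $\sigma_{2j}$ to agree and thereby killing all sign cancellation. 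This is shorter and more elementary, and even yields the explicit value $N=(2^{n+1}(n+1)!)^{m+1}((m+1)!)^{n+1}$; in fact this is exactly the constant $a_{m,n}$ that the paper introduces after its proof when normalising $\hat\Phi$. What the paper's longer argument buys, and your approach does not, is the structural formula \eqref{EqnXy3} (equivalently \eqref{eqnorm}) expressing $\hat\Phi$ as $y_{\mu_c}\circ\Phi_{LZ}(n+1)^{\otimes(m+1)}\circ y_{\mu_c}$; this identity is the workhorse for everything that follows (the diagrammatic description of $\widetilde\Phi$, the quasi-idempotent property, and the generators $\widetilde\Phi^{\bi}_{\bj}$), so in the context of the full paper it is not just a proof of nonvanishing but a computation one needs anyway.
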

\begin{proof}
The lemma holds if and only if the following element of $\Ker F_{r_c}^{r_c}$ is nonzero:
\begin{eqnarray}\label{eq:Phi-def}
\Phi:=\bU_{r_c}(\hat{A}^{ r_c}\circ \fc_{\la_c}).
\end{eqnarray}
By  \lemref{AUmaps},  we obtain
$\Phi=(\fc_{\la_c} )^{\sharp}\ast \bU_{r_c}(\hat{A}^{ r_c})=y_{\la_c}x_{\la_c}\ast I_{r_c}.$

Let $X_{\la_c}:=x_{\la_c}\ast I_{r_c}$. We shall first analyse the action of $x_{\la_c}$ on $I_r$. Recall that both $x_{\la_c}$ and $y_{\la_c}$ are defined on $\ft^{\la_c}$, which is filled with $1,2,\dots,2r_c$ from left to right  along successive rows. Then the $i$-th row $R_i$ ($ 1\leq i\leq m+1$) of $\ft^{\la_c}$ is 
\[ R_i=\{ (2n+2)(i-1)+1, (2n+2)(i-1)+2,\dots, (2n+2)(i-1)+2n+2\}. \]
Corresponding to each row, we define the row symmetriser $x(R_i):=\sum_{\sigma\in \Sym\{R_i\}} \sigma$ and hence $x_{\la_c}=\prod_{i=1}^{m+1}x(R_i)$. We now invoke the labelling of vertices of Brauer diagrams discussed in Section \ref{sect:lablelling}. Note that each $x(R_i)$ only acts on the vertices of $I_r$ labelled by the elements of $R_i$ and leaves other vertices unchanged. Using \lemref{LemSymAct}, we have 
$$x(R_i)\ast I_r= 2^{n+1} (n+1)!I^{\ot (i-1)(n+1)}\ot \Phi_{LZ}(n+1) \ot I^{\ot r_c-i(n+1)}.$$ 
Recalling \eqref{eqLZelmt}, we obtain
\begin{equation}\label{EqnXla}
\begin{aligned}
  X_{\la_c}&=\prod_{i=1}^{m+1}x(R_i)\ast I_{r_c}\\
  &=(2^{n+1} (n+1)!)^{m+1} \Phi_{LZ}(n+1)\ot \Phi_{LZ}(n+1) \ot \dots\ot \Phi_{LZ}(n+1)\\ 
  &=\sum\limits_{0\leq i_1,i_2,\dots,i_{m+1}\leq [\frac{n+1}{2}]}c_{i_1,i_2,\dots,i_{m+1}} \Xi_{n+1}(i_1)\ot \Xi_{n+1}(i_2)\ot\cdots\ot \Xi_{n+1}(i_{m+1}),
\end{aligned}
\end{equation}
where $ c_{i_1,i_2,\dots,i_{m+1}}=(2^{n+1} (n+1)!)^{m+1}\prod_{k=1}^{m+1}c_{i_k}$ with $c_{i_k}=((2^{i_k} (i_k)! )^2 (n+1-2i_k)!)^{-1}$. Pictorially the element $\Xi_{n+1}(i_1)\ot \Xi_{n+1}(i_2)\ot\cdots\ot \Xi_{n+1}(i_{m+1})$ can be represented by
\begin{center}
\begin{picture}(200, 100)(10,-40)
	\put(5, 40){\line(0, 1){20}}
	\put(23, 50){...}
	\put(50, 40){\line(0, 1){20}}	
	
	\put(0, 20){\line(1, 0){55}}
	\put(0, 20){\line(0, 1){20}}
	\put(55, 20){\line(0, 1){20}}
	\put(0, 40){\line(1, 0){55}}
	\put(20, 28){\tiny ${n+1}$}
	
	\qbezier(3, 20)(8, 2)(13, 20)
	\put(15, 16){...}
	\qbezier(26, 20)(31, 2)(36, 20)
	\put(17, 8){\tiny$i_1$}
	\qbezier(3, 0)(8, 18)(13, 0)
	\put(15, 4){...}
	\qbezier(26, 0)(31, 18)(36, 0)
	
	\put(39, 20){\line(0, -1){20}}
	\put(40, 10){...}
	\put(52, 20){\line(0, -1){20}}
	
	\put(0, 0){\line(1, 0){55}}
	\put(0, 0){\line(0, -1){20}}
	\put(55, 0){\line(0, -1){20}}
	\put(0, -20){\line(1, 0){55}}
	\put(20, -12){\tiny ${n+1}$}
	\put(5, -20){\line(0, -1){20}}
	\put(23, -30){...}
	\put(50, -20){\line(0, -1){20}}
	
	\put(75, 40){\line(0, 1){20}}
	\put(93, 50){...}
	\put(120, 40){\line(0, 1){20}}
	
	\put(70, 20){\line(1, 0){55}}
	\put(70, 20){\line(0, 1){20}}
	\put(125, 20){\line(0, 1){20}}
	\put(70, 40){\line(1, 0){55}}
	\put(90, 28){\tiny ${n+1}$}
	
	\qbezier(73, 20)(78, 2)(83, 20)
	\put(85, 16){...}
	\qbezier(96, 20)(101, 2)(106, 20)
	\put(87, 8){\tiny$i_2$}
	\qbezier(73, 0)(78, 18)(83, 0)
	\put(85, 4){...}
	\qbezier(96, 0)(101, 18)(106, 0)
	
	\put(109, 20){\line(0, -1){20}}
	\put(110, 10){...}
	\put(122, 20){\line(0, -1){20}}
		
	\put(70, 0){\line(1, 0){55}}
	\put(70, 0){\line(0, -1){20}}
	\put(125, 0){\line(0, -1){20}}
	\put(70, -20){\line(1, 0){55}}
	\put(90, -12){\tiny ${n+1}$}
	\put(75, -20){\line(0, -1){20}}
	\put(93, -30){...}
	\put(120, -20){\line(0, -1){20}}   	
	
	\put(140,10){...}
	
	\put(170, 40){\line(0, 1){20}}
	\put(188, 50){...}
	\put(215, 40){\line(0, 1){20}}
	
	\put(165, 20){\line(1, 0){55}}
	\put(165, 20){\line(0, 1){20}}
	\put(220, 20){\line(0, 1){20}}
	\put(165, 40){\line(1, 0){55}}
	\put(185, 28){\tiny ${n+1}$}	
	
	\qbezier(167, 20)(172, 2)(177, 20)
	\put(180, 16){...}
	\qbezier(191, 20)(196, 2)(201, 20)
	\put(175, 8){\tiny$i_{m+1}$}
	\qbezier(167, 0)(172, 18)(177, 0)
	\put(180, 4){...}
	\qbezier(191, 0)(196, 18)(201, 0)
	
	\put(204, 20){\line(0, -1){20}}
	\put(205, 10){...}
	\put(217, 20){\line(0, -1){20}}
		
	\put(165, 0){\line(1, 0){55}}
	\put(165, 0){\line(0, -1){20}}
	\put(220, 0){\line(0, -1){20}}
	\put(165, -20){\line(1, 0){55}}
	\put(185, -12){\tiny ${n+1}$}
	\put(170, -20){\line(0, -1){20}}
	\put(188, -30){...}
	\put(215, -20){\line(0, -1){20}}	
	
	\put(220,-40){.}
	\end{picture}
\end{center}

We now turn to the action of $y_{\la_c}$ on $X_{\la_c}$.
We note in particular that the odd columns of $\ft^{\la_c}$ are filled with odd integers, while the even columns are filled with even integers. Denote by $C_{i}$ the $i$-th column of $\mathfrak{t}^{\la_c}$. Then $\Sym\{C_i\}$ is the group of permutations of odd (resp. even) numbers if $i$ is odd (resp. even). Let $y(C_i):=\sum_{\sigma\in \Sym\{C_i\}} \epsilon(\sigma)\sigma$, then
\begin{equation*}
    y_{\la_c}=\prod_{i=1}^{2n+2}y(C_i)=\prod_{k=1}^{n+1}y(C_{2k-1}) \prod_{k=1}^{n+1}y (C_{2k}).
\end{equation*}
Note that  the vertices in the top row and bottom row of $X_{\la_c}$ are labelled by odd and even integers respectively. Thus,  by \rmkref{rmkoddevenact} we have
\begin{equation*} \label{EqnXy}
   \Phi= y_{\la_c}\ast X_{\la_c}=\prod_{k=1}^{n+1}y(C_{2k-1})\circ X_{\la_c} \circ \prod_{k=1}^{n+1}y (C_{2k}).
\end{equation*}
Note that the right hand side is now a product of elements of $\CA_{r_c}$.

 Let $\mu_c=((n+1)^{m+1})$, and let $x_{\mu_c}$ and $y_{\mu_c}$ be the elements in $\CA_{r_c}$ defined with respect to $\mathfrak{t}^{\mu_c}$.  Observe that  $\prod_{k=1}^{n+1}y(C_{2k-1})$ and $\prod_{k=1}^{n+1}y(C_{2k})$ are both equal to  $y_{\mu_c}$ when viewed as alternating sums of Brauer  $(r_c,r_c)$-diagrams. Therefore, we  rewrite the above equation as
 \begin{equation}\label{EqnXy3}
  \Phi = y_{\mu_c}\circ  X_{\la_c}\circ y_{\mu_c}.
 \end{equation}
Using \eqnref{EqnXla}, we obtain
\begin{equation}\label{EqnXy2}
   \Phi = \sum_{0\leq i_1,i_2,\dots,i_{m+1}\leq [\frac{n+1}{2}]} c_{i_1,i_2,\dots,i_{m+1}} y_{\mu_c} \bigg(\Xi_{n+1}(i_1)\ot \Xi_{n+1}(i_2)\ot\cdots\ot \Xi_{n+1}(i_{m+1})\bigg)y_{\mu_c}.
 \end{equation}
It can be easily seen from the diagram that  $\Xi_{n+1}(0)\ot \Xi_{n+1}(0)\ot\cdots\ot \Xi_{n+1}(0)=x_{\mu_c}x_{\mu_c}$.
Observe that the leading term $c_{0,0,\dots,0}y_{\mu_c}x_{\mu_c}x_{\mu_c}y_{\mu_c}$ on  the right hand side  of \eqref{EqnXy2} has no horizontal edges, while the other terms all have at least one horizontal edge. Hence there is no cancellation between the leading term and the rest. Moreover, the leading term is nonzero, since $x_{\mu_c }c_{0,0,\dots,0}y_{\mu_c}x_{\mu_c}x_{\mu_c}y_{\mu_c}=((n+1)!)^{m+1}c_{0,0,\dots,0}h_{\mu_c}\fc_{\mu_c}\neq 0.$  Therefore $\Phi$ is nonzero.
\end{proof}

 Let $a_{m,n}=(2^{n+1}(n+1)!)^{m+1}$. It follows from \eqref{EqnXla}  that $a_{m,n}^{-1} X_{\la_c}$  is a linear combination of Brauer diagrams with integral coefficients, thus so is  $a_{m,n}^{-1}\Phi$. For ease of notation, we shall adopt the following normalised forms:
 \begin{equation}\label{eqnorm}
 \hat{\fc}_{\la_c}:=a_{m,n}^{-1} \fc_{\la_c}, \quad \hat{\Phi}:=\bU_{r_c}(\hat{A}^{ r_c}\circ \hat{\fc}_{\la_c})= y_{\mu_c}\circ \big(\Phi_{LZ}(n+1)^{\ot (m+1)}\big)\circ y_{\mu_c}.
 \end{equation}
 We shall give a diagrammatic description of $\hat{\Phi}$ in \lemref{propidemp} below.


We now construct a basis for the kernel $\Ker F_{r_c}^{r_c}$. Recall that $\{\fc_{\la_c}d(\ft)\mid \ft\in \Std(\la_c)\}$ is a basis for the right Specht module $\fc_{\la_c}\CA_{2r_c}$. For any standard tableau $\ft\in \Std(\la_c)$, we define
\begin{equation}\label{eqn:defBasis}
  \Phi_{\ft}:= d(\ft)^{-1}\ast \Phi.
\end{equation}

\begin{thm} \label{thmBasisMin}
 The set
 $
 \Theta_{r_c}:=\{\Phi_{\ft}\mid \ft\in \Std(\la_c)\}
 $
forms a basis for $\Ker F_{r_c}^{r_c}$.
\end{thm}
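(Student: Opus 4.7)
The plan is to transfer the problem to $\Ker F_{2r_c}^0$ via the $\CA_{2r_c}$-equivariant isomorphism $\mathbb{A}_{r_c}\colon B_{r_c}^{r_c}(m-2n) \to B_{2r_c}^0(m-2n)$ of \lemref{AUmaps}, which restricts to an isomorphism $\Ker F_{r_c}^{r_c} \xrightarrow{\sim} \Ker F_{2r_c}^0$ by \corref{CorIsoKer}. A direct unwinding of the definitions, using $\alpha\ast A = A\circ\alpha^{\sharp}$ together with $\Phi = \bU_{r_c}(\hat{A}^{r_c}\circ \fc_{\la_c})$, yields
$$\mathbb{A}_{r_c}(\Phi_{\ft}) = \hat{A}^{r_c}\circ \fc_{\la_c}\, d(\ft),$$
so the statement becomes: $\{\hat{A}^{r_c}\circ \fc_{\la_c}\, d(\ft)\mid \ft\in\Std(\la_c)\}$ is a basis of $\Ker F_{2r_c}^0$.

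By \thmref{LemKer}, $\la_c$ is the unique element of $2\CP_{r_c}$ containing $\la_c$, so $\Ker F_{2r_c}^0\cong S^{\la_c}$ as $\CA_{2r_c}$-modules and in particular $\dim\Ker F_{2r_c}^0 = |\Std(\la_c)|$, which matches the size of the proposed basis. It therefore suffices to establish linear independence.

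For this, I would introduce the homomorphism of right $\CA_{2r_c}$-modules
$$\psi\colon \widetilde{S^{\la_c}} = \fc_{\la_c}\CA_{2r_c}\longrightarrow B_{2r_c}^0(m-2n), \qquad x\mapsto \hat{A}^{r_c}\circ x.$$
Its image lies in $\hat{A}^{r_c}\circ I(m,n) = \Ker F_{2r_c}^0$ by \thmref{thm:sft}, and $\psi(\fc_{\la_c}) = \hat{A}^{r_c}\circ \fc_{\la_c} \neq 0$ by \lemref{LemyX}. Since the right Specht module $\widetilde{S^{\la_c}}$ is simple, $\psi$ must be injective; combined with the dimension equality above, it is an isomorphism onto $\Ker F_{2r_c}^0$. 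Applying $\psi$ to the standard basis $\{\fc_{\la_c}\, d(\ft)\mid \ft\in \Std(\la_c)\}$ of $\widetilde{S^{\la_c}}$ then produces a basis of $\Ker F_{2r_c}^0$, and pulling back through $\bU_{r_c}$ yields $\Theta_{r_c}$ as the desired basis of $\Ker F_{r_c}^{r_c}$.

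The substantive work has already been absorbed into the two supporting results: the nonvanishing of $\Phi$ established in \lemref{LemyX}, and the Specht-module description of the kernel obtained in \thmref{LemKer}. Once these are in hand, the remaining argument is a clean ``simple module plus dimension count'' and presents no real obstacle beyond verifying the identification $\mathbb{A}_{r_c}(\Phi_{\ft}) = \hat{A}^{r_c}\circ \fc_{\la_c}\,d(\ft)$.
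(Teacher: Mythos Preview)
Your proposal is correct and follows essentially the same line as the paper's proof: both transfer to $\Ker F_{2r_c}^0$, use \lemref{LemyX} to get a nonzero element, and then combine the simplicity of the Specht module from \thmref{LemKer} with a dimension count to conclude that $\{\hat{A}^{r_c}\circ\fc_{\la_c}\,d(\ft)\mid\ft\in\Std(\la_c)\}$ is a basis, finally pulling back via $\bU_{r_c}$. The only cosmetic difference is that you phrase the simplicity argument as injectivity of a map out of $\widetilde{S^{\la_c}}$, whereas the paper phrases it as a nonzero submodule of the simple module $\Ker F_{2r_c}^0$ being the whole thing.
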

\begin{proof}
 Consider  $\hat{A}^{ r_c}\circ\fc_{\la_c}$, it is nonzero by  \lemref{LemyX}, thus $\hat{A}^{ r_c}\circ\fc_{\la_c} \CA_{2r_c}$ is a nonzero submodule of $\Ker F_{2r_c}^{0}$ spanned by the set  $\Upsilon:=\{\hat{A}^{ r_c}\circ\fc_{\la_c}d(\ft)\;|\; \ft\in \Std(\la_c)\}$.  By \thmref{LemKer},
$\Ker F_{2r_c}^0$ is isomorphic to the simple module $S^{\la_c}$, hence $\hat{A}^{ r_c}\circ\fc_{\la_c}\CA_{2r_c}=\Ker F_{2r_c}^0$. As the cardinality of $\Upsilon$ is precisely $\dim S^{\la_c}$,  it is a basis of $\Ker F_{2r_c}^{0}$,  and hence $\bU_{r_c}(\Upsilon)$
 is a basis for $\Ker F_{r_c}^{r_c}$.
Using \lemref{AUmaps}, we obtain
   $$
  \bU_{r_c}(\hat{A}^{ r_c}\circ\fc_{\la_c}d(\ft))=d(\ft)^{-1}\ast \bU_{r_c}(\hat{A}^{ r_c}\circ\fc_{\la_c})=\Phi_{\ft}.
  $$
This completes the proof.
\end{proof}

\begin{rmk}
The $\Sym_{2r}$-structure of $\Ker F_r^r$ in the case of classical groups was
investigated by Doty and Hu in \cite{DH,H}, where they constructed integral bases for the Brauer algebra and for the annihilators of tensor spaces.
\end{rmk}

\subsection{Basis for $\Ker F_r^r$ in general case}
For any even partition $\la=(\la_1,\dots,\la_k)\in \tcp$, we define $\la/2:=(\frac{\la_1}{2},\dots,\frac{\la_k}{2})$ and 
$\Phi_{\la}:=\bU_r(\hat{A}^{ r}\circ \fc_{\la})$.

\begin{lem}\label{lemPhila}
	Let $\tcp\ni\la=(\la_1,\dots,\la_k)\supseteq \la_c$. Then 
	\[  \Phi_{\la}=a_{\la} y_{\la/2}\circ \bigg(\Phi_{LZ}(\frac{\la_1}{2})\ot \Phi_{LZ}(\frac{\la_2}{2})\ot\dots\ot \Phi_{LZ}(\frac{\la_k}{2}) \bigg)\circ y_{\la/2},\]
	is nonzero, where $a_{\la}=\prod_{i=1}^k2^{\frac{\la_i}{2}}(\frac{\la_i}{2})!$ and $y_{\la/2}$ is associated with $\ft^{\la/2}$. 
\end{lem}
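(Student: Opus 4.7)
The plan is to mimic the proof of \lemref{LemyX}, which is the special case $\la=\la_c$, but now keeping track of the varying row lengths $\la_1,\dots,\la_k$. The key structural facts we need are (i) $\la\in\tcp$ means each row has even length, so the column-parity pattern of $\ft^\la$ used in the $\la_c$-case still holds, and (ii) the containment $\la\supseteq\la_c$ guarantees that $\la/2$ has at least $m+1$ rows each of length at least $n+1$, which is only used in the final nonvanishing step via $\fc_{\la/2}\ne 0$ (already ensured for any partition by the simplicity of Specht modules). By \lemref{AUmaps} we begin with
\[
\Phi_\la=\bU_r(\hat{A}^{\,r}\circ\fc_\la)=(\fc_\la)^\sharp\ast\bU_r(\hat{A}^{\,r})=y_\la x_\la\ast I_r.
\]

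First I would compute $X_\la:=x_\la\ast I_r$. Writing $x_\la=\prod_{i=1}^{k}x(R_i)$, where $R_i$ is the $i$-th row of $\ft^\la$ and $|R_i|=\la_i$ is even, each factor $x(R_i)$ acts on a disjoint block of $\la_i$ vertices of $I_r$ and leaves the others alone. Applying \lemref{LemSymAct} block by block yields
\[
x(R_i)\ast I_r=2^{\la_i/2}(\la_i/2)!\;I^{\otimes (\la_1+\cdots+\la_{i-1})/2}\otimes\Phi_{LZ}(\la_i/2)\otimes I^{\otimes (\la_{i+1}+\cdots+\la_k)/2},
\]
so that $X_\la=a_\la\,\Phi_{LZ}(\la_1/2)\otimes\cdots\otimes\Phi_{LZ}(\la_k/2)$ with $a_\la=\prod_i 2^{\la_i/2}(\la_i/2)!$ as in the statement.

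Next I would apply $y_\la=\prod_j y(C_j)$, where $C_j$ is the $j$-th column of $\ft^\la$. Because every row of $\ft^\la$ has even length, the same parity argument used in the proof of \lemref{LemyX} shows that each column $C_j$ consists entirely of integers of parity $j\pmod 2$; hence the odd-column symmetriser $y^{\mathrm{odd}}_\la:=\prod_{j\text{ odd}}y(C_j)$ permutes only odd labels and the even-column symmetriser $y^{\mathrm{even}}_\la$ only even labels. By \rmkref{rmkoddevenact} this gives
\[
\Phi_\la=y_\la\ast X_\la=y^{\mathrm{odd}}_\la\circ X_\la\circ y^{\mathrm{even}}_\la.
\]
The pairing $C_{2j-1}\leftrightarrow C_{2j}$ coming from the evenness of $\la$ identifies both products with $y_{\la/2}$ viewed as an element of $\CA_r\subset B_r^r(m-2n)$, yielding the asserted formula.

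The remaining task is nonvanishing of $\Phi_\la$. Expanding $\Phi_{LZ}(\la_i/2)=\sum_{j\ge 0}c_j\Xi_{\la_i/2}(j)$, the unique term in the tensor product with no horizontal edges is the $(j_1,\dots,j_k)=(0,\dots,0)$ contribution, which (after using $\Xi_r(0)=r!\,x_{(r)}$ and the tensor-product identity $(x_{(r_1)}\otimes\cdots\otimes x_{(r_k)})^2=(\prod r_i!)\,x_{(r_1)}\otimes\cdots\otimes x_{(r_k)}$) equals a positive scalar multiple of $x_{\la/2}$, lying in the group algebra $\CA_r$. Every other summand contains a factor $\Xi_{\la_i/2}(j)$ with $j\ge 1$, hence has at least one horizontal edge, and left/right multiplication by $y_{\la/2}\in\CA_r$ preserves this property. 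Since elements of $\CA_r$ and Brauer diagrams with horizontal edges are linearly independent in $B_r^r(m-2n)$, no cancellation is possible. Thus $\Phi_\la\ne 0$ reduces to $y_{\la/2}x_{\la/2}y_{\la/2}\ne 0$ in $\CA_r$, and I would close the argument exactly as in \lemref{LemyX}: left multiplication by $x_{\la/2}$ yields
\[
x_{\la/2}\cdot y_{\la/2}x_{\la/2}y_{\la/2}=(x_{\la/2}y_{\la/2})^2=\fc_{\la/2}^{\,2}=h_{\la/2}\fc_{\la/2}\ne 0.
\]
The only potential obstacle is the bookkeeping in step three (identifying $y^{\mathrm{odd}}_\la$ and $y^{\mathrm{even}}_\la$ with $y_{\la/2}$ when the columns of $\la$ need not all have the same length), but since $\la$ has only even parts we have $\la'_{2k-1}=\la'_{2k}=(\la/2)'_k$, so the odd and even columns form two copies of $\la/2$ and the identification is unambiguous.
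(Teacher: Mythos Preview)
Your argument is correct and follows the paper's own approach: it is the natural generalisation of the proof of \lemref{LemyX}, replacing the rectangular $\la_c$ by an arbitrary even partition $\la\supseteq\la_c$ and tracking the varying row lengths, and the paper's proof is in fact only a sketch pointing back to that lemma. Your observation that the hypothesis $\la\supseteq\la_c$ is not actually needed for either the formula or the nonvanishing (only $\fc_{\la/2}\neq 0$ is required) is also correct; the containment is imposed only because it is what makes $\Phi_\la$ land in $\Ker F_r^r$ in the subsequent Proposition.
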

\begin{proof}
	This can be proved similarly as in \lemref{LemyX}, so we only give a sketch of proof. Recalling  $\fc_{\la}=x_{\la}y_{\la}$ which is associated to $\ft^{\la}$, we have $\Phi_{\la}=(\fc_{\la} )^{\sharp}\ast \bU_{r}(\hat{A}^{ r})=y_{\la}x_{\la}\ast I_{r}$. Now let $X_{\la}=x_{\la}\ast I_{r}$. One can show that $X_{\la}=a_{\la}\Phi_{LZ}(\frac{\la_1}{2})\ot \Phi_{LZ}(\frac{\la_2}{2})\ot\dots\ot \Phi_{LZ}(\frac{\la_k}{2})$ with $a_{\la}$ as given in this lemma. Further, we will have $\Phi_{\la}=y_{\la}\ast X_{\la}=y_{\la/2}\circ X_{\la} \circ y_{\la/2}$ as desired. Here we regard  $y_{\la/2}$ as alternating sum of Brauer diagrams and we have used the fact that the even (resp. odd) columns of $\ft^{\la}$ are filled with even (resp. odd) integers.
\end{proof}

The following proposition identifies explicitly the Specht modules in the kernel.
\begin{prop}\label{lemKer}
	Let $r\geq r_c$. Then we have
	\begin{equation}\label{eqKeriden}
	\Ker F_{2r}^0=\bigoplus_{\tcp\ni\la\supseteq \la_c} \hat{A}^{ r}\circ \fc_{\la}\CA_{2r}, \quad 	\Ker F_{r}^{r}=\bigoplus_{\tcp\ni\la\supseteq \la_c} \bU_r(\hat{A}^{ r}\circ \fc_{\la}\CA_{2r}),
	\end{equation}
	where $\fc_{\la}$ is the Young symmetriser associated with $\ft^{\la}$. 
\end{prop}
\begin{proof}
	For any $\la$ as given,  by \lemref{lemPhila} the element
	$\hat{A}^{ r}\circ \fc_{\la}\in \Ker F_{2r}^0$ is nonzero.  Hence $\hat{A}^{ r}\circ \fc_{\la}\CA_{2r}\cong S^{\la}$ is a nonzero simple submodule in $\Ker F_{2r}^0$. The equation \eqref{eqKeriden} follows from \thmref{LemKer}.
\end{proof}

\begin{thm}\label{thmbasisgeneral}
	The set $\Theta_{r}:=\{d(\ft)^{-1}\ast \Phi_{\la} \mid  \tcp\ni\la\supseteq \la_c, \ft\in \Std(\la) \}$ forms a basis for $\Ker F_r^r$ for any $r\geq r_c$.
\end{thm}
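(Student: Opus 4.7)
The plan is to apply, summand by summand, the argument already used in the minimal case Theorem \ref{thmBasisMin} to the multiplicity-free decomposition provided by Proposition \ref{lemKer}. Concretely, by Proposition \ref{lemKer} we have
\[
\Ker F_r^r = \bigoplus_{\tcp \ni \la \supseteq \la_c} \bU_r\bigl(\hat{A}^{r}\circ \fc_\la \CA_{2r}\bigr),
\]
so it suffices to exhibit a basis for each individual summand indexed by an admissible $\la$ and check that their union has the right total cardinality, matching the dimension formula of Theorem \ref{thmKeriso}.

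For a fixed $\la \in \tcp$ with $\la \supseteq \la_c$, I would consider the right $\CA_{2r}$-linear map
\[
\fc_\la \CA_{2r} \longrightarrow \hat{A}^{r}\circ \fc_\la \CA_{2r}, \qquad \fc_\la \alpha \mapsto \hat{A}^{r}\circ \fc_\la \alpha,
\]
which is a surjection by construction. The domain is the right Specht module $\widetilde{S^\la} = \fc_\la \CA_{2r}$, which is simple and has dimension $(2r)!/h_\la$ with standard basis $\{\fc_\la d(\ft) \mid \ft \in \Std(\la)\}$. By Lemma \ref{lemPhila}, $\hat{A}^{r}\circ \fc_\la \neq 0$, so the codomain is nonzero; hence the surjection is an isomorphism. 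Therefore the image $\{\hat{A}^{r}\circ \fc_\la d(\ft) \mid \ft \in \Std(\la)\}$ of the Specht basis is a basis of $\hat{A}^{r}\circ \fc_\la \CA_{2r}$.

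Next, I would transport this basis through $\bU_r$. By Lemma \ref{AUmaps}, $\bU_r$ is a $\CA_{2r}$-isomorphism, and by the definition \eqref{eq:action-1}--\eqref{eq:action-2} of the $*$-action one has $\bU_r(A\circ \sigma) = \sigma^{-1}\ast \bU_r(A)$ for any $\sigma \in \Sym_{2r}$. Consequently
\[
\bU_r\bigl(\hat{A}^{r}\circ \fc_\la d(\ft)\bigr) = d(\ft)^{-1}\ast \bU_r(\hat{A}^{r}\circ \fc_\la) = d(\ft)^{-1}\ast \Phi_\la,
\]
so $\{d(\ft)^{-1}\ast \Phi_\la \mid \ft \in \Std(\la)\}$ is a basis of $\bU_r(\hat{A}^{r}\circ \fc_\la \CA_{2r})$. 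Taking the disjoint union over all $\la \in \tcp$ with $\la \supseteq \la_c$ produces exactly the set $\Theta_r$, which is therefore a basis of $\Ker F_r^r$ by the direct sum decomposition.

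There is essentially no substantive obstacle beyond what has already been established: the dimension count, the multiplicity-freeness and the nonvanishing of each $\Phi_\la$ are all delivered by Theorem \ref{thmKeriso}, Proposition \ref{lemKer} and Lemma \ref{lemPhila}. The only mildly technical point is keeping the left/right action conventions straight when moving from $B_{2r}^0(m-2n)$ to $B_r^r(m-2n)$ via $\bU_r$, but this is precisely the content of Lemma \ref{AUmaps} together with the identity $\bU_r(A \circ \sigma) = \sigma^{-1}\ast \bU_r(A)$.
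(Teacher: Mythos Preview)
Your proposal is correct and follows essentially the same approach as the paper: the paper's proof simply states that the result follows from Lemma~\ref{lemPhila} and Proposition~\ref{lemKer}, and you have spelled out exactly those details, mirroring the argument already given for the minimal case in Theorem~\ref{thmBasisMin}.
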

\begin{proof}
	This follows from \lemref{lemPhila} and \propref{lemKer}.
\end{proof}

\section{Generators of $\Ker F_{r}^{r}$ in minimal degree}
  The basis elements of $\Ker F_{r_c}^{r_c}$ given in Theorem \ref{thmBasisMin} obviously form a generating set of $\Ker F_{r_c}^{r_c}$ as a $2$-sided ideal of $B_{r_c}(m-2n)$. However this set is too large, thus is unwieldy to use. In this section, we shall construct a more convenient set of generators for $\Ker F_{r_c}^{r_c}$.

\subsection{Diagrammatics for $\Ker F_{r_c}^{r_c}$}\label{SecOrbHr}
We shall give a diagrammatic description for $\Ker F_{r_c}^{r_c}$.

\subsubsection{The element $\widetilde{\Phi}$}\label{SecPhi}
For convenience, we temporarily use another labelling for Brauer diagrams. Given a Brauer $(r, r)$-diagram $D$, we label the vertices in the top and bottom rows respectively by  $1,2,\dots,r$ and $\overline{1},\overline{2},\dots,\overline{r}$ from left to right.  The bijection
\begin{equation}\label{eqpi}
\pi:\{1, 2, \dots, r, \overline{1}, \overline{2}, \dots, \overline{r}\} \to \{1, 2, \dots, 2r\}, \quad
\pi(i)=2i-1, \  \pi(\overline{i})=2i, \ \forall1\leq i\leq r,
\end{equation}
enables us to convert the present labelling to that defined in Section \ref{sect:lablelling}.

In the present labelling,   $\Sym_{2r}$ may be identified with the permutation group on $\{1, 2, \dots, r,\\ \overline{1}, \overline{2}, \dots, \overline{r}\}$. We define the parabolic subgroup $H_r$ of $\Sym_{2r}$ by 
\begin{equation}\label{eq:H_r}
H_{r}:=\Sym\{1,2,\dots,r\}\times \Sym\{\overline{1},\overline{2},\dots,\overline{r}\}.
\end{equation}
From \rmkref{rmkoddevenact}  we obtain  the $H_{r}$-action as the composition of Brauer diagrams, i.e.,  $\sigma\ast D=\sigma_1 \circ D\circ \sigma_2\inv$ for any  $\sigma=(\sigma_1,\sigma_2)\in H_{r}$.
We denote the $H_r$-orbit by $[D]_{H_r}$, i.e.,  
$[D]_{H_r}:=H_r\ast D$.

Note that $[\Psi]_{H_r} \subseteq \Ker F_r^r$ if $\Psi\in \Ker F_r^r$, and hence particularly we have
$[\Phi]_{H_{r_c}} \subseteq \Ker F_{r_c}^{r_c}$. We now construct an element in $[\Phi]_{H_{r_c}}$.  Write $\mu_c=((n+1)^{m+1})$. Let $w_{\mu_c}:=d(\ft_{\mu_c})$ and $w:=(w_{\mu_c},w_{\mu_c})\in H_{r_c}$. Using \rmkref{rmkoddevenact}, we define
\begin{equation}\label{EqnwPhidef}
\widetilde{\Phi}:=w \ast\hat{\Phi}=w_{\mu_c}\circ\hat{\Phi}\circ w_{\mu_c}^{-1} \quad
\text{with $\hat{\Phi}$ as in \eqref{eqnorm}. }
\end{equation}
For notational convenience, we let
\begin{equation*}
A(m+1)=\sum_{\sigma\in \Sym_{m+1}}\epsilon(\sigma)\sigma,\quad S(n+1)=\sum_{\sigma\in \Sym_{n+1}}\sigma,\quad
B(n+1)=\Phi_{LZ}(n+1),
\end{equation*}
and regard them as morphisms in the Brauer category. Then for all $\sigma\in \Sym_{m+1}$ and $\tau\in \Sym_{n+1}$, we clearly have  the following symmetry properties
\begin{equation}\label{eqsympro}
\begin{aligned}
&A(m+1)\sigma=\sigma A(m+1)=\epsilon(\sigma)A(m+1),\\
&S(n+1)\tau=\tau S(n+1)=S(n+1),\\
&B(n+1)\tau=\tau B(n+1)=B(n+1).
\end{aligned}
\end{equation}
Also regarding $x_{\mu_c}$ and $y_{\mu_c}$   as morphisms in the Brauer category,  one has
\begin{equation}\label{eqxymuc}
x_{\mu_c}=S(n+1)^{\ot (m+1)}, \quad  y_{\mu_c}=w_{\mu_c}^{-1} A(m+1)^{\ot (n+1)}w_{\mu_c},
\end{equation}
where $A(m+1)^{\ot (n+1)}$ is  the alternating sum of column stablisers of $\ft_{\mu_c}$.                                                                   

\begin{lem}\label{propidemp}
	$\widetilde{\Phi}$ is an integral sum of Brauer diagrams and is a quasi-idempotent, that is,
	\begin{eqnarray}\label{EqnwPhi}
	&\widetilde{\Phi}&=A(m+1)^{\ot (n+1)}w_{\mu_c} B(n+1)^{\ot (m+1)} w_{\mu_c}^{-1} A(m+1)^{\ot (n+1)},\\
	&\widetilde{\Phi}^2&=c_{m,n}\widetilde{\Phi}, \quad \text{where $c_{m,n}=((m+1)!)^{n+1}h_{\mu_c}$}. \label{EqnwPhi2}
	\end{eqnarray}
\end{lem}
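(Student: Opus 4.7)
My plan is to verify the three claims in sequence. The formula~\eqref{EqnwPhi} follows by direct substitution: combining the definition $\widetilde{\Phi} = w_{\mu_c}\circ\hat{\Phi}\circ w_{\mu_c}^{-1}$ from~\eqref{EqnwPhidef}, the expression $\hat{\Phi} = y_{\mu_c}\circ B(n+1)^{\otimes(m+1)}\circ y_{\mu_c}$ from~\eqref{eqnorm}, and the identity $y_{\mu_c} = w_{\mu_c}^{-1}A(m+1)^{\otimes(n+1)}w_{\mu_c}$ from~\eqref{eqxymuc}, the outer $w_{\mu_c}^{\pm 1}$ telescope with the innermost conjugations in each copy of $y_{\mu_c}$, leaving exactly~\eqref{EqnwPhi}. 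Integrality of $\widetilde{\Phi}$ is then immediate: $A(m+1)$ is a signed sum of permutations by definition, $w_{\mu_c}^{\pm 1}$ is a single permutation, and by~\lemref{LemSymAct} the element $B(n+1)=\Phi_{LZ}(n+1)$ is a sum of Brauer diagrams with unit coefficients.

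For the quasi-idempotency~\eqref{EqnwPhi2}, I first reduce to proving $\hat{\Phi}^2 = c_{m,n}\hat{\Phi}$ by conjugating by $w_{\mu_c}$. Writing $\hat{\Phi} = y_{\mu_c} B y_{\mu_c}$ with $B:=B(n+1)^{\otimes(m+1)}$ and using $y_{\mu_c}^2 = ((m+1)!)^{n+1}\, y_{\mu_c}$ (since each of the $n+1$ columns of $\ft^{\mu_c}$ has $m+1$ entries), the claim reduces further to the central identity
\[
y_{\mu_c}\, B\, y_{\mu_c}\, B\, y_{\mu_c} \;=\; h_{\mu_c}\, y_{\mu_c}\, B\, y_{\mu_c}.
\]
Two structural inputs feed into this. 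First, applying the invariance~\eqref{eqsympro} tensor-factor-wise yields $x_{\mu_c} B = B x_{\mu_c} = ((n+1)!)^{m+1}\, B$. Second, applying the antipode $\sharp$ to the classical quasi-idempotence $\fc_{\mu_c}^2 = h_{\mu_c}\,\fc_{\mu_c}$ gives $(y_{\mu_c} x_{\mu_c})^2 = h_{\mu_c}\, y_{\mu_c} x_{\mu_c}$, whence the sandwich identity $y_{\mu_c} x_{\mu_c} y_{\mu_c} x_{\mu_c} y_{\mu_c} = h_{\mu_c}\, y_{\mu_c} x_{\mu_c} y_{\mu_c}$ by multiplication on the right by $y_{\mu_c}$ and division by $((m+1)!)^{n+1}$.

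The main obstacle is extending from the ``symmetric part'' $x_{\mu_c}$ to the full $B$. My approach is to split $B = x_{\mu_c} + H$, where $H$ collects the contributions from Brauer diagrams with at least one horizontal edge. This splitting expands the left-hand side of the central identity into four summands, and the $(x_{\mu_c},x_{\mu_c})$-summand reduces immediately to $h_{\mu_c}\, y_{\mu_c} x_{\mu_c} y_{\mu_c}$ via the sandwich identity above. The three remaining ``mixed'' summands must then be shown to total $h_{\mu_c}\, y_{\mu_c} H y_{\mu_c}$, so that the two sides of the central identity match term-by-term after decomposing $B = x_{\mu_c}+H$ on both sides. Exploiting the absorption $x_{\mu_c} H = H x_{\mu_c} = ((n+1)!)^{m+1}\, H$ (which follows by subtraction from $x_{\mu_c} B = ((n+1)!)^{m+1} B$) together with careful tracking of how horizontal edges of $H$ interact with the column antisymmetrisation $y_{\mu_c}$ is where I expect the bulk of the technical work to lie.
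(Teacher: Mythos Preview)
Your derivation of~\eqref{EqnwPhi} and the integrality claim are correct and match the paper's argument exactly. Your reduction of~\eqref{EqnwPhi2} to the central identity $y_{\mu_c} B y_{\mu_c} B y_{\mu_c} = h_{\mu_c}\, y_{\mu_c} B y_{\mu_c}$ and the handling of the $(x_{\mu_c},x_{\mu_c})$-summand via the sandwich identity are also fine. In fact, absorption already does more than you claim: inserting $H = ((n+1)!)^{-(m+1)} x_{\mu_c} H$ and using $(y_{\mu_c} x_{\mu_c})^2 = h_{\mu_c}\, y_{\mu_c} x_{\mu_c}$ gives $y_{\mu_c} x_{\mu_c} y_{\mu_c} H y_{\mu_c} = h_{\mu_c}\, y_{\mu_c} H y_{\mu_c}$, and symmetrically $y_{\mu_c} H y_{\mu_c} x_{\mu_c} y_{\mu_c} = h_{\mu_c}\, y_{\mu_c} H y_{\mu_c}$. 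So two of your three mixed summands evaluate cleanly.

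The genuine gap is the remaining term $y_{\mu_c} H y_{\mu_c} H y_{\mu_c}$: it contains no $x_{\mu_c}$ to absorb, and your appeal to ``tracking horizontal edges against $y_{\mu_c}$'' gives no mechanism to evaluate it. The missing ingredient is \propref{AnnThm}: since $\hat{\Phi}\in\Ker F_{r_c}^{r_c}$, one has $\hat{\Phi} D = D\hat{\Phi} = 0$ for every $D$ in the two-sided ideal generated by $e_1$, hence $\hat{\Phi} H = 0$. The paper exploits this by splitting only the \emph{second} copy of $B$: writing $\hat{\Phi}^2 = \hat{\Phi}\cdot(y_{\mu_c} x_{\mu_c} y_{\mu_c} + y_{\mu_c} H y_{\mu_c})$, annihilation kills the $H$-summand outright, leaving $\hat{\Phi}^2 = ((m+1)!)^{n+1} y_{\mu_c} B y_{\mu_c} x_{\mu_c} y_{\mu_c}$; one application of absorption together with $\fc_{\mu_c}^2 = h_{\mu_c}\fc_{\mu_c}$ then finishes. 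Your four-term route can also be completed once you invoke annihilation (it forces $y_{\mu_c} H y_{\mu_c} H y_{\mu_c} = -y_{\mu_c} x_{\mu_c} y_{\mu_c} H y_{\mu_c} = -h_{\mu_c}\, y_{\mu_c} H y_{\mu_c}$, and the three mixed terms sum to $h_{\mu_c}\, y_{\mu_c} H y_{\mu_c}$ as needed), but the paper's one-split argument is shorter.
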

\begin{proof}
	Using \eqref{eqnorm}, we have 
	$\hat{\Phi}=y_{\mu_c} B(n+1)^{\ot (m+1)}  y_{\mu_c}$. Hence equation  \eqref{EqnwPhi} follows from  \eqref{EqnwPhidef} and \eqref{eqxymuc}. To see \eqref{EqnwPhi2}, we only need to show $\hat{\Phi}$ satisfies the same equation.  Let $B_{r_c}(m-2n)^{(1)}$ the subalgebra of $B_{r_c}(m-2n)$ generated by $e_1$. Then every nonzero element in $B_{r_c}(m-2n)^{(1)}$ contains horizontal edges. Using $B(n+1)\equiv S(n+1) \pmod{B_{r_c}(m-2n)^{(1)}}$, we have
	$\hat{\Phi} \equiv y_{\mu_c} S(n+1)^{\ot (m+1)}  y_{\mu_c} \pmod{B_{r_c}(m-2n)^{(1)}}.$
	By the annihilation property in \propref{AnnThm}, we obtain
	\begin{equation}\label{eqhatPhi}
	\hat{\Phi}^2=((m+1)!)^{n+1}y_{\mu_c}B(n+1)^{\ot (m+1)}y_{\mu_c} S(n+1)^{\ot (m+1)}  y_{\mu_c}.
	\end{equation}
	By the symmetric property \eqref{eqsympro}, we have $B(n+1)=((n+1)!)^{-1}B(n+1)S(n+1)$. Since $x_{\mu_c}=S(n+1)^{\ot (m+1)}$ and $\fc_{\mu_c}=x_{\mu_c}y_{\mu_c}$ is an idempotent, we obtain  $\hat{\Phi}^2=((m+1)!)^{n+1}h_{\mu_c}\hat{\Phi} $ from \eqref{eqhatPhi} as desired.	
\end{proof}

We now give a visualisation of $\widetilde{\Phi}$ in \eqref{EqnwPhi}.   Recall that $d(\ft_{\mu_c})\in \Sym_{r_c}$ satisfies $\ft^{\mu_c}d(\ft_{\mu_c})=\ft_{\mu_c}$. Thus as a Brauer diagram $w_{\mu_c}=d(\ft_{\mu_c})$ has the vertical edges $\{(m+1)(j-1)+i,\overline{(n+1)(i-1)+j}\}$ with $1\leq i\leq m+1,1\leq j\leq n+1$, which join $A(m+1)^{\ot (n+1)}$ and $B(n+1)^{\ot (m+1)}$ pictorially in the following way:
\vskip 0.3cm
\begin{center}
	\begin{tikzpicture}
	\node at (-1.8,0.13){{\footnotesize $B(n+1)^{\ot (m+1)}:$}};
	\node at (-1.8,2.3){{\footnotesize $A(m+1)^{\ot (n+1)}$:}};
	\node at (-1.5,1.2){{\footnotesize $w_{\mu_c}:$}};
	
	\draw (-0.1,-0.2) rectangle (2.1,0.5);
	\draw (2.9,-0.2) rectangle (5.1,0.5);
	\node at (6.0,0.15){\dots};   	
	\draw (6.9,-0.2) rectangle (9.1,0.5);
	
	\node at (1,0.13){{\footnotesize $B(n+1)$}};
	\node at (4,0.13){{\footnotesize $B(n+1)$}};
	\node at (8,0.13){{\footnotesize $B(n+1)$}};
	
	\draw (0,2) -- (0,0.5);
	\draw (0.3,2) -- (3.0,0.5);
	\draw (1.5,2) -- (7,0.5);
	\node at (1.2,1.8){\dots};  
	
	\draw (2.5,2) -- (0.3,0.5);
	\draw (2.8,2) -- (3.3,0.5);
	\draw (4.1,2) -- (7.3,0.5);
	\node at (3.4,1.8){\dots};
	
	\draw (7.4,2) -- (2,0.5);
	\draw (7.7,2) -- (5,0.5);
	\draw (9,2) -- (9,0.5);
	\node at (8.0,1.8){\dots}; 
	
	\node at (1.2,0.7){\dots};
	\node at (3.9,0.7){\dots};  
	\node at (8.0,0.7){\dots};
	
	\draw (-0.1,2) rectangle (1.7,2.7);  	
	\draw (2.4,2) rectangle (4.2,2.7);
	\node at (5.8,2.3){\dots};  
	\draw (7.3,2) rectangle (9.1,2.7);
	\node at (0.8,2.3){{\footnotesize $A(m+1)$}};
	\node at (3.3,2.3){{\footnotesize $A(m+1)$}};
	\node at (8.2,2.3){{\footnotesize $A(m+1)$}};
	
	\node at (9.5,0.2){.};	 	  	 	  	  	
	\end{tikzpicture}
\end{center}

\begin{exam} \label{ExamOSp12}
	{\rm
		In the case of $\OSp(1|2))$, i.e.,
		$m=n=1$, we have $\mu_c=(2,2)$,
		\begin{equation*}
		\begin{aligned}
		\ft^{\mu_c}=\;&
		\ytableausetup{boxsize=0.5cm}
		\begin{ytableau}
		1&2\\
		3&4	
		\end{ytableau}\;,
		\quad & \ft_{\mu_c}=\;&
		\begin{ytableau}
		1&3\\
		2&4	
		\end{ytableau}\;,
		&\quad& w_{\mu_c}=(23).
		\end{aligned}
		\end{equation*}
		Then $\widetilde{\Phi}$ can be depicted as follows:
		\begin{center}
			\begin{picture}(100, 100)(0,0)
			\put(0,0){\line(0,1){18}}
			\put(0,0){\line(1,0){40}}	
			\put(40,18){\line(0,-1){18}}
			\put(40,18){\line(-1,0){40}}
			\put(8,5){$A(2)$}
			
			\put(60,0){\line(0,1){18}}
			\put(60,0){\line(1,0){40}}	
			\put(100,18){\line(0,-1){18}}
			\put(100,18){\line(-1,0){40}}
			\put(68,5){$A(2)$}
			
			\qbezier(5,18)(5,29)(5,40)
			\qbezier(35,18)(50,29)(65,40)
			\qbezier(35,40)(50,29)(65,18)
			\qbezier(95,18)(95,29)(95,40)
			
			\put(0,40){\line(0,1){18}}
			\put(0,40){\line(1,0){40}}	
			\put(40,58){\line(0,-1){18}}
			\put(40,58){\line(-1,0){40}}
			\put(8,45){$B(2)$}
			
			\put(60,40){\line(0,1){18}}
			\put(60,40){\line(1,0){40}}	
			\put(100,58){\line(0,-1){18}}
			\put(100,58){\line(-1,0){40}}
			\put(68,45){$B(2)$}
			
			\qbezier(5,58)(5,69)(5,80)
			\qbezier(35,58)(50,69)(65,80)
			\qbezier(35,80)(50,69)(65,58)
			\qbezier(95,58)(95,69)(95,80)      		
			
			\put(0,80){\line(0,1){18}}
			\put(0,80){\line(1,0){40}}	
			\put(40,98){\line(0,-1){18}}
			\put(40,98){\line(-1,0){40}}
			\put(8,85){$A(2)$}
			
			\put(60,80){\line(0,1){18}}
			\put(60,80){\line(1,0){40}}	
			\put(100,98){\line(0,-1){18}}
			\put(100,98){\line(-1,0){40}}
			\put(68,85){$A(2)$}	\
			\put(110,20){.}
			\end{picture}	
		\end{center}
		Also see \exref{Examty} for another example.
	}
\end{exam}

\subsubsection{The elements $\widetilde{\Phi}_{\bj}^{\bi}$}
Let us first introduce some notation. Write
\begin{equation}\label{Eqnij}
\begin{aligned}
&\mathbf{i}:=(i_1,i_2,\cdots,i_k),\quad 1\leq i_1<i_2<\cdots<i_k\leq r,\\
&\mathbf{j}:=(j_1,j_2,\cdots,j_l),\quad \overline{1}\leq j_1<j_2<\cdots<j_l\leq \overline{r}
\end{aligned}
\end{equation}
for the sequences and denote their lengths, which are $k$ and $l$,  by $\ell(\bi)$ and $\ell(\bj)$ respectively. 

\begin{defn}\label{defnDij}
	Let $\bi,\bj$ be as above. We define the bijective map $\psi_{\bj}^{\bi}: B_{r}^r(m-2n) \rightarrow B_{r-l+k}^{r-k+l}(m-2n)$ by $ \psi_{\bj}^{\bi}(D)=D_{\bj}^{\bi}$ with $D_{\bj}^{\bi}$ as shown below:
	\begin{center}
		\begin{picture}(145, 88)(-30,-35)
		\put(-30,6){$D_{\bj}^{\bi}=$}	
		\put(2,20){\line(0,1){30}}
		\put(78,20){\line(0,1){30}}
		
		\qbezier(25,20)(100,80)(145,-30)
		\qbezier(60,20)(90,60)(120,-30)
		\put(20,25){\tiny $i_1$}
		\put(54,25){\tiny $i_k$}
		\put(36,23){\dots}
		\put(120,-15){\dots}
		\put(126,-23){{\tiny $k$}}
		
		\put(32,20){\line(0,1){30}}
		\put(52,20){\line(0,1){30}}
		\put(32,0){\line(0,-1){30}}
		\put(52,0){\line(0,-1){30}}
		\put(10,35){\dots}
		\put(10,-20){\dots}
		
		\put(0, 0){\line(1, 0){80}}
		\put(0, 0){\line(0, 1){20}}
		\put(80, 0){\line(0, 1){20}}
		\put(0, 20){\line(1, 0){80}}
		\put(35, 6){$D$}
		
		\put(2,0){\line(0,-1){30}}
		\put(78,0){\line(0,-1){30}}
		\qbezier(25,0)(100,-60)(145,50)
		\qbezier(60,0)(90,-40)(120,50)
		\put(20,-9){\tiny $j_1$}
		\put(55,-9){\tiny $j_l$}
		\put(36,-5){\dots}
		\put(120,30){\dots}
		\put(126,35){{\tiny $l$}}
		\put(155,-20){.}	
		\end{picture}
	\end{center}
	This can be extended linearly, i.e.,  $\psi^{\bi}_{\bj}(\sum_{D} a_D D)=\sum_{D}a_D D^{\bi}_{\bj}$.
\end{defn}

\begin{exam}
	{\rm Let $D$ be the diagram as in \exref{examDact}. Then we have
		\begin{center}
			\begin{picture}(100,60)(-10,-10)
			\put(-50,12){$D^{(2,3)}_{(\bar{3})}=$}
			\qbezier(0,30)(30,10)(60,30)
			\qbezier(30,30)(15,15)(0,0)
			\qbezier(30,0)(45,30)(60,0)	
			
			\put(-2,32){\tiny$1$}
			\put(-2,-9){\tiny$\bar{1}$}
			\put(28,32){\tiny$2$}
			\put(28,-9){\tiny$\bar{2}$}
			\put(56,32){\tiny$3$}
			\put(56,-9){\tiny$\bar{3}$}
			
			\qbezier(60,30)(75,60)(80,0)
			\qbezier(30,30)(90,80)(100,0)
			\qbezier(60,0)(90,-40)(100,30)
			\put(110,12){$=$}
			
			\qbezier(130,30)(150,15)(170,0)
			\qbezier(150,30)(150,15)(150,0)
			\qbezier(130,0)(160,25)(190,0)
			\put(200,4){,}
			\end{picture}		
		\end{center}
		which is a Brauer $(4,2)$-diagram.	Note that $D_{\bj}^{\bi}\in B_{r}(m-2n)$ if and only if $\ell(\bi)=\ell(\bj)$. 		
	}
\end{exam}

We now relate the diagram $D^{\bi}_{\bj}$ to the symmetric group action on $D$.  Algebraically,  the  representatives for right cosets $ H_r\sigma$ in $\Sym_{2r}$ arise from such pair $\bi,\bj$ of equal length. Denote by $(i_s, j_s)$  the transposition in $\Sym_{2r}$, and let 
$(\bi,\bj):=\prod_{s=1}^{k}(i_s,j_s)$.
Then $(\bi,\bj)$ is one of representatives for the cosets $ H_r\sigma$. In what follows, we shall assume that the right coset representatives of $H_r$ are of this form. We have 
$D_{\bj}^{\bi}\in [(\bi,\bj)\ast D]_{H_r},$
since it is clear from diagram that there exists $\sigma=(\sigma_1,\sigma_2)\in H_r$ such that $(\bi,\bj)\ast D=\sigma\ast D=\sigma_1\circ D_{\bj}^{\bi}\circ \sigma_2^{-1}$. Therefore, $[D_{\bj}^{\bi}]_{H_r}=[(\bi,\bj)\ast D]_{H_r}$.

Given any elements $D_1, D_2, \dots, D_N$ in the Brauer algebra $B_r(m-2n)$, we use $\langle D_1, \dots, D_N\rangle_{r}$ to denote the $2$-sided ideal in $B_r(m-2n)$ which they generate.

\begin{prop} \label{PropPsiij}
	$	\Ker F_{r_c}^{r_c}=\langle\widetilde{\Phi}_{\bj}^{\bi} \mid         \ell(\bi)=\ell(\bj)\le r_c \rangle_{r_c}.$
\end{prop}

\begin{proof}
	Since $\widetilde{\Phi}_{\bj}^{\bi}\in [(\bi,\bj)\ast \widetilde{\Phi}]_{H_{r_c}}$ and $\Ker F_{r_c}^{r_c}$ is $\CA_{2r}$-module, we have $\widetilde{\Phi}_{\bj}^{\bi}\in\Ker F_r^r$ for all ${\bi}, {\bj}$ of equal length. So by \thmref{thmBasisMin} it remains to prove that the basis elements $\Phi_{\ft}$ of $\Ker F_{r_c}^{r_c}$  are generated by elements $\widetilde{\Phi}_{\bj}^{\bi}$.
	Using \eqref{eqnorm} and \eqnref{EqnwPhidef}, we have $\Phi_{\ft}=a_{m,n} (wd(\ft))^{-1}\ast\widetilde{\Phi}$. Observe that there exist $\bi$ and $\bj$ of equal length such that $(wd(\ft))^{-1} \in H_{r_c}(\bi,\bj)$. Thus
	$a_{m,n}^{-1}\Phi_{\ft}\in [(\bi,\bj)\ast\widetilde{\Phi}]_{H_{r_c}}=[\widetilde{\Phi}_{\bj}^{\bi}]_{H_{r_c}}$. For any $h=(\sigma_1, \sigma_2)\in H_{r_c}$,
	we have $
	h\ast\widetilde{\Phi}_{\bj}^{\bi}=\sigma_1\circ\widetilde{\Phi}_{\bj}^{\bi}\circ\sigma_2^{-1}
	\in \langle\widetilde{\Phi}_{\bj}^{\bi} \mid   \ell(\bi)=\ell(\bj)\le r_c \rangle_{r_c}$, and hence we have $\Phi_{\ft}\in \langle\widetilde{\Phi}_{\bj}^{\bi} \mid   \ell(\bi)=\ell(\bj)\le r_c \rangle_{r_c}$.
\end{proof}

\begin{rmk}
	By \propref{PropPsiij}, to reduce the number of generators for the kernel $\Ker F_{r_c}^{r_c}$,  we should focus on the right coset representatives of $H_{r_c}$ in $\Sym_{2r_c}$. 
\end{rmk}

\subsection{Garnir relations}\label{secGarnir}
We want to select out a set of the elements of the form  $\widetilde{\Phi}^{\bi}_{\bj}$, which  will generate $\Ker F_{r_c}^{r_c}$ and can be
characterised more explicitly. To this end, we shall introduce Garnir relations \cite[\S 7.2]{J} among these elements.

\subsubsection{Standard sequences of increasing types}\label{SecStdSeq}
We need some combinatorial notions first. 
\begin{defn} \label{defstdseq}
	Given  sequences $\bi$ and $\bj$ with $1\leq i_1<\dots<i_k\leq r_c$ and  $1\leq j_1<\dots<j_l\leq r_c$.
	We refer to
	$\ty(\bi)=(a_1,a_2,\dots,a_{n+1})$ with
	\[
	a_p=\#\{i_s\;|\; (p-1)(m+1)+1\leq i_s\leq p(m+1), \; 1\leq s\leq k \}, \quad
	1\leq p\leq n+1,
	\]
	as the \textit{type} of $\bi$.  Similarly, we define $\ty(\bj):=(b_1,b_2,\dots,b_{n+1})$ such that 
	\[
	b_q=\#\{j_s\;|\; \overline{(q-1)(m+1)+1}\leq j_s\leq \overline{q(m+1)}, \; 1\leq s\leq l \}, \quad 1\leq q\leq n+1.
	\]
\end{defn}
Clearly, $0\leq a_p \leq m+1$ and $0\leq b_q \leq m+1$ for all $p,q$.
Furthermore, $\sum_{p=1}^{n+1}a_p=\ell(\bi)$ and  $\sum_{q=1}^{n+1}b_q=\ell(\bj)$. The relevance of the types of $\bi$ and $\bj$ in the description of $\widetilde{\Phi}_{\bj}^{\bi}$ is best illustrated by an example.

\begin{exam} 
	{\rm
		\label{Examty}
		Suppose that $m=1,n=2$. Then we have $\mu_c=(3,3)$ and $w_{\mu_c}=(2354)$. By \eqnref{EqnwPhi} we represent $\widetilde{\Phi}$ pictorially as follows:
		\begin{center}
			\begin{tikzpicture}
			\draw (-0.1,0) rectangle (2.1,0.5);
			\draw (2.9,0) rectangle (5.1,0.5);
			\node at (1,0.21){$B(3)$};
			\node at (4,0.21){$B(3)$};
			
			\draw (0,0.5) -- (0,1.5);	
			\draw (1,0.5) -- (2,1.5);	  		
			\draw (2,0.5) -- (4,1.5);	  	
			\draw (3,0.5) -- (1,1.5);	
			\draw (4,0.5) -- (3,1.5);	
			\draw (5,0.5) -- (5,1.5);	
			
			\draw (-0.1,1.5) rectangle (1.1,2);  	
			\draw (1.9,1.5) rectangle (3.1,2);
			\draw (3.9,1.5) rectangle (5.1,2);
			\node at (0.5,1.71){$A(2)$};
			\node at (2.5,1.71){$A(2)$};
			\node at (4.5,1.71){$A(2)$};
			
			\draw (0,2) -- (0,2.2);	
			\draw (1,2) -- (1,2.2);	  		
			\draw (2,2) -- (2,2.2);	  	
			\draw (3,2) -- (3,2.2);	
			\draw (4,2) -- (4,2.2);	
			\draw (5,2) -- (5,2.2);	  	
			
			\node at (0,2.4){$\tiny 1$};  	
			\node at (1,2.4){$\tiny 2$};
			\node at (2,2.4){$\tiny 3$};    	
			\node at (3,2.4){$\tiny 4$};    	
			\node at (4,2.4){$\tiny 5$};
			\node at (5,2.4){$\tiny 6$};
			\draw (0,0) -- (0,-1);	
			\draw (1,0) -- (2,-1);	  		
			\draw (2,0) -- (4,-1);	  	
			\draw (3,0) -- (1,-1);	
			\draw (4,0) -- (3,-1);	
			\draw (5,0) -- (5,-1);			
			
			\draw (-0.1,-1) rectangle (1.1,-1.5);  	
			\draw (1.9,-1) rectangle (3.1,-1.5);
			\draw (3.9,-1) rectangle (5.1,-1.5); 		
			\node at (0.5,-1.28){$A(2)$};
			\node at (2.5,-1.28){$A(2)$};
			\node at (4.5,-1.28){$A(2)$};
			
			\draw (0,-1.5) -- (0,-1.7);	
			\draw (1,-1.5) -- (1,-1.7);	  		
			\draw (2,-1.5) -- (2,-1.7);	  	
			\draw (3,-1.5) -- (3,-1.7);	
			\draw (4,-1.5) -- (4,-1.7);	
			\draw (5,-1.5) -- (5,-1.7);	  	
			
			\node at (0,-2){$\tiny \bar{1}$};  	
			\node at (1,-2){$\tiny \bar{2}$};
			\node at (2,-2){$\tiny \bar{3}$};    	
			\node at (3,-2){$\tiny \bar{4}$};    	
			\node at (4,-2){$\tiny \bar{5}$};
			\node at (5,-2){$\tiny \bar{6}$};
			\node at (5.5,-1){.};	 	  	 	  	  	
			\end{tikzpicture}
		\end{center}
		If we take $\bi=(1,3,4)$, then $\ty(\bi)=(1,2,0)$. From the picture we can see that  $a_p$,   the $p$-th entry of $\ty(\bi)$, is the number of vertices in $\bi$ which belong to  $p$-th  block $A(2)$ at the top. These vertices are moved down to the bottom row in $\widetilde{\Phi}_{\bj}^{\bi}$. We have similar arguments  for  $\bj$.
	}
	
\end{exam}

The interpretation of the entries of $\ty(\bi)$ given in the example clearly generalises to arbitrary sequences for any $m$ and $n$.
It then follows from the total skew symmetry of $A(m+1)$ (see \eqref{eqsympro}) that we have the following result.
\begin{lem} Assume that $\ell(\bi)=\ell(\bj)\le r_c$. Then for any $\bi'$ and $\bj'$ which are of the same types as $\bi$ and $\bj$ respectively, $\widetilde{\Phi}_{\bj}^{\bi}=\pm\widetilde{\Phi}_{\bj'}^{\bi'}$.
\end{lem}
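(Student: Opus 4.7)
The plan is to exploit the total antisymmetry of the $A(m+1)^{\ot(n+1)}$ layers at the top and bottom of $\widetilde{\Phi}$ in formula \eqref{EqnwPhi}, which absorbs any block-preserving permutation with a sign.

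First, since $\ty(\bi)=\ty(\bi')=(a_1,\dots,a_{n+1})$, the underlying sets $\{i_1,\dots,i_k\}$ and $\{i'_1,\dots,i'_k\}$ each intersect the block $B_p:=\{(p-1)(m+1)+1,\dots,p(m+1)\}$ in exactly $a_p$ elements. Hence there exists a permutation $\sigma_1\in\Sym(B_1)\times\cdots\times\Sym(B_{n+1})\subset\Sym_{r_c}$ mapping $\{i_s\}$ to $\{i'_s\}$ as sets; since both sequences are listed in increasing order, we may choose $\sigma_1$ so that $\sigma_1(i_s)=i'_s$ for every $s$. An analogous construction on the bottom row produces $\sigma_2$ in the corresponding Young subgroup with $\sigma_2(j_s)=j'_s$.

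Next, I would establish the diagrammatic identity
\[
\widetilde{\Phi}_{\bj'}^{\bi'} \;=\; \psi_{\bj}^{\bi}\bigl(\sigma_1^{-1}\circ\widetilde{\Phi}\circ\sigma_2\bigr).
\]
This follows directly from \defref{defnDij}: the top vertex at position $i_s$ of $\sigma_1^{-1}\circ\widetilde{\Phi}$ is joined to the top vertex at position $\sigma_1(i_s)=i'_s$ of $\widetilde{\Phi}$, so the cup attached at $i_s$ on the left-hand side traces out the same arc in $\widetilde{\Phi}$ as the cup attached at $i'_s$ on the right-hand side; because $\bi$ and $\bi'$ are both written in increasing order, these cups meet the new right-bottom vertices in matching order. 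The corresponding statement at the bottom, using $\sigma_2$, is symmetric.

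Finally, invoking the antisymmetry relations \eqref{eqsympro}, for any $\sigma$ in the Young subgroup $\Sym(B_1)\times\cdots\times\Sym(B_{n+1})$ one has $\sigma\circ A(m+1)^{\ot(n+1)}=A(m+1)^{\ot(n+1)}\circ\sigma=\epsilon(\sigma)\,A(m+1)^{\ot(n+1)}$ because $\sigma$ decomposes as a tensor product of permutations within each block. Applied to the top and bottom antisymmetriser layers of $\widetilde{\Phi}$ this yields
\[
\sigma_1^{-1}\circ\widetilde{\Phi}\circ\sigma_2 \;=\; \epsilon(\sigma_1)\epsilon(\sigma_2)\,\widetilde{\Phi},
\]
and combining with the identity of the previous step and the linearity of $\psi_{\bj}^{\bi}$ gives $\widetilde{\Phi}_{\bj'}^{\bi'}=\epsilon(\sigma_1)\epsilon(\sigma_2)\,\widetilde{\Phi}_{\bj}^{\bi}=\pm\,\widetilde{\Phi}_{\bj}^{\bi}$, as required. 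The main obstacle is the careful verification of the diagrammatic identity in the middle step, which is a bookkeeping exercise that crucially uses the increasing ordering of $\bi,\bi',\bj,\bj'$ to guarantee that corresponding arcs meet the new right-column vertices in the same order on both sides; once this is settled, the rest of the argument is pure symmetry.
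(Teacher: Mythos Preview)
Your argument is correct and follows exactly the idea the paper indicates: the lemma is stated there as an immediate consequence of the total skew symmetry of $A(m+1)$ in \eqref{eqsympro}, with no further proof given, and your three steps simply unpack that remark.

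One small point to tighten in your middle step: to make the diagrammatic identity $\widetilde{\Phi}_{\bj'}^{\bi'}=\psi_{\bj}^{\bi}(\sigma_1^{-1}\circ\widetilde{\Phi}\circ\sigma_2)$ hold on the nose, you must choose each $\sigma_1|_{B_p}$ to be order-preserving not only on $\bi\cap B_p$ (which you do arrange) but also on the complement $B_p\setminus\bi$; otherwise the ``straight-through'' strands at the top of $\psi_{\bj}^{\bi}(\sigma_1^{-1}\widetilde{\Phi}\sigma_2)$ will hit the top of $\widetilde{\Phi}$ in a permuted order and the two diagrams will differ by a further permutation of the surviving top vertices. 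Since $|\bi\cap B_p|=|\bi'\cap B_p|=a_p$, such a $\sigma_1$ always exists (send the $a_p$ elements of $\bi\cap B_p$ in order to those of $\bi'\cap B_p$, and likewise for the complements), and your step~3 applies to it unchanged. With that choice made explicit, the bookkeeping you flag at the end goes through cleanly.
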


\begin{defn}
	We choose a special sequence $\bi$ of the given type $(a_1, a_2, \dots, a_{n+1})$ as follows: for each $a_p\neq 0$, we pick the first $a_p$ vertices in the $p$-th $A(m+1)$ from left to right and denote the set of these $a_p$ vertices by $\bi_p$. We call  $\bi$ the \textit{standard sequence} of type $(a_1,a_2,\dots,a_{n+1})$ and $\bi_p$ the \textit{$p$-th piece} of $\bi$. Furthermore, if $0\leq a_1\leq a_2\leq\cdots\leq a_{n+1}\leq m+1$, we call $\bi$ the \textit{standard sequence of increasing type}.
\end{defn}

\begin{exam}
	{\rm If we take
		$\bi=(1,3,4), \bi^{\prime}=(2,3,4)$ and $\bj=(\overline{3},\overline{4},\overline{5})$
		in \exref{Examty},
		then $\bi$ and $\bj$ are standard sequences with type $(1,2,0)$ and $(0,2,1)$ respectively, while $\bi^{\prime}$ is a sequence of type $(1,2,0)$ that is not standard. By the symmetry property of $A(2)$, we have $\widetilde{\Phi}^{\bi}_{\bj}=-\widetilde{\Phi}^{\bi^{\prime}}_{\bj}$. The pieces of $\bi$ corresponding to each entries of $\ty(\bi)$ are: $\bi_1=(1)$, $\bi_2=(3,4)$ and $\bi_3=\emptyset$.
	}
\end{exam}

\begin{lem}\label{lemincty}	
	Let $\ty(\bi)$ and $\ty(\bj)$ be the types of standard sequences $\bi$ and $\bj$ respectively.  Then $\widetilde{\Phi}^{\bi}_{\bj}\in [\widetilde{\Phi}^{\bi^{\prime}}_{\bj^{\prime}}]_{H_{r_c}}$, where $\bi^{\prime}$ and $\bj^{\prime}$ are standard sequences of increasing types with $\ty(\bi^{\prime})$ and $\ty(\bj^{\prime})$  respectively obtained from $\ty(\bi)$ and $\ty(\bj)$ by re-arranging the entries in increasing order.
\end{lem}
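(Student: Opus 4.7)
The plan is to exploit a $\Sym_{n+1}\times\Sym_{n+1}$ symmetry of $\widetilde\Phi$ inside the $H_{r_c}$-action, coming from permutations of the $n+1$ outer $A(m+1)$ blocks. For each $\sigma\in\Sym_{n+1}$ let $\hat\sigma\in\Sym_{r_c}$ be the block-permutation lift
\[
\hat\sigma\bigl((p-1)(m+1)+a\bigr)=(\sigma(p)-1)(m+1)+a,\qquad 1\le p\le n+1,\;1\le a\le m+1.
\]
The key claim I will establish is that $(\hat\sigma_1,\hat\sigma_2)\ast\widetilde\Phi=\widetilde\Phi$ for all $\sigma_1,\sigma_2\in\Sym_{n+1}$, i.e.\ $\hat\sigma_1\circ\widetilde\Phi\circ\hat\sigma_2^{-1}=\widetilde\Phi$.

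To verify this symmetry I start from the factorisation $\widetilde\Phi=A(m+1)^{\otimes(n+1)}\circ w_{\mu_c}B(n+1)^{\otimes(m+1)}w_{\mu_c}^{-1}\circ A(m+1)^{\otimes(n+1)}$ given by \lemref{propidemp}. Since $\hat\sigma_i$ permutes identical tensor factors of $A(m+1)^{\otimes(n+1)}$ while preserving the internal ordering of each block, $\hat\sigma_i$ commutes with $A(m+1)^{\otimes(n+1)}$ in the Brauer algebra. It therefore suffices to prove that the middle factor $w_{\mu_c}B(n+1)^{\otimes(m+1)}w_{\mu_c}^{-1}$ is left unchanged under left multiplication by $\hat\sigma_1$ and right multiplication by $\hat\sigma_2^{-1}$. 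A short calculation from the definition $w_{\mu_c}=d(\ft_{\mu_c})$ gives $\hat\sigma_i\circ w_{\mu_c}=w_{\mu_c}\circ\tau_{\sigma_i}$, where $\tau_\sigma$ is the permutation sending $(a-1)(n+1)+b\mapsto(a-1)(n+1)+\sigma(b)$; that is, $\tau_\sigma$ acts as $\sigma$ simultaneously on the $b$-index of each of the $m+1$ copies of $B(n+1)$. By \lemref{LemSymAct}, $B(n+1)=\Phi_{LZ}(n+1)$ is the sum of all Brauer $(n+1,n+1)$-diagrams and is therefore invariant under both left and right multiplication by $\Sym_{n+1}$. Hence $\tau_{\sigma_1}B(n+1)^{\otimes(m+1)}\tau_{\sigma_2}^{-1}=B(n+1)^{\otimes(m+1)}$, which yields the claimed symmetry.

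With the symmetry in hand, the lemma follows by transport. Starting from $\widetilde\Phi^{\bi}_{\bj}\in [(\bi,\bj)\ast\widetilde\Phi]_{H_{r_c}}$ and using the elementary identity $(\hat\sigma_1,\hat\sigma_2)(i_s,j_s)(\hat\sigma_1,\hat\sigma_2)^{-1}=(\hat\sigma_1(i_s),\hat\sigma_2(j_s))$ together with the fact that the transpositions $(i_s,j_s)$ commute among themselves, I obtain
\[
(\hat\sigma_1,\hat\sigma_2)\ast\widetilde\Phi^{\bi}_{\bj}\in [(\hat\sigma_1(\bi),\hat\sigma_2(\bj))\ast\widetilde\Phi]_{H_{r_c}}=[\widetilde\Phi^{\bi''}_{\bj''}]_{H_{r_c}},
\]
where $\bi''$ (resp.\ $\bj''$) is $\hat\sigma_1(\bi)$ (resp.\ $\hat\sigma_2(\bj)$) rewritten in increasing order; reordering does not change the underlying product of commuting transpositions. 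If $\bi$ is the standard sequence of type $(a_1,\dots,a_{n+1})$, its $p$-th piece consists of the leftmost $a_p$ vertices of the $p$-th $A(m+1)$-block, so after applying $\hat\sigma_1$ and sorting one finds that $\bi''$ is the standard sequence of type $(a_{\sigma_1^{-1}(1)},\dots,a_{\sigma_1^{-1}(n+1)})$. Choosing $\sigma_1,\sigma_2\in\Sym_{n+1}$ to be any permutations that sort $\ty(\bi)$ and $\ty(\bj)$ into increasing order yields $\bi''=\bi'$ and $\bj''=\bj'$, completing the proof. The main obstacle is the verification of the $\Sym_{n+1}\times\Sym_{n+1}$-symmetry of $\widetilde\Phi$, and in particular the interaction of $w_{\mu_c}$ with the block permutations; once that has been unpacked, the remaining steps are routine bookkeeping.
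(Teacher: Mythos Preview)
Your proof is correct and follows essentially the same route as the paper's: both arguments rest on the $\Sym_{n+1}\times\Sym_{n+1}$ block-permutation symmetry of $\widetilde\Phi$ inside $H_{r_c}$, and then transport the pair $(\bi,\bj)$ through this symmetry to reach the standard sequences of increasing type. The paper states the symmetry $(\widetilde\sigma_1,\widetilde\sigma_2)\ast\widetilde\Phi=\widetilde\Phi$ as a ``key observation'' without further justification, whereas you unpack it via the commutation of $\hat\sigma_i$ with $A(m+1)^{\otimes(n+1)}$, the intertwining relation $\hat\sigma_i\circ w_{\mu_c}=w_{\mu_c}\circ\tau_{\sigma_i}$, and the two-sided $\Sym_{n+1}$-invariance of $B(n+1)$; this is a welcome addition rather than a different strategy.
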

\begin{proof}
	It is equivalent to showing that $(\bi,\bj)\ast \widetilde{\Phi}\in  [(\bi^{\prime},\bj^{\prime})\ast \widetilde{\Phi}]_{H_{r_c}}$, since $(\bi,\bj)\ast \widetilde{\Phi}$ and $\widetilde{\Phi}^{\bi}_{\bj}$ are in the same $H_{r_c}$-orbit for any $\bi$ and $\bj$.
	
	We need a symmetry property of $\widetilde{\Phi}$. For that purpose, we define  the injective map $X_{m,n}: \Sym_{n+1}\rightarrow \Sym_{(m+1)(n+1)}$ by sending  $\sigma$ to $X_{m,n}(\sigma)$. Here $X_{m,n}(\sigma)$ is a permutation on the set $\{1,2,\dots,(m+1)(n+1)\}$  defined by 
	\[ ( X_{m,n}(\sigma))(k)=
	\begin{cases}
	\sigma(k_1+1)+k_2-1, \quad & 0<k_2\leq m,\\
	\sigma(k_1)+m,\quad  &k_2=0,
	\end{cases}
	\]
	where $k_1$ and $k_2$ are respectively  the unique  quotient and residue in the expression $k=k_1(m+1)+k_2$ for all $1\leq k\leq r_c=(m+1)(n+1)$. Write $X_{m,n}(\sigma)$ as $\widetilde{\sigma}$ for simplicity. A key observation is that $\widetilde{\Phi}$ is fixed under the action of $(\widetilde{\sigma}_1, \widetilde{\sigma}_2)\in H_{r_c}$ for any $\sigma_1,\sigma_2\in \Sym_{n+1}$, i.e.,
	\[(\widetilde{\sigma}_1, \widetilde{\sigma}_2)\ast \widetilde{\Phi}=\widetilde{\sigma}_1\circ \widetilde{\Phi}\circ \widetilde{\sigma}_2^{-1}= \widetilde{\Phi}. \]
	
	Now let $\ty(\bi)=(a_1,a_2,\dots,a_{n+1})$, then there exists a permutation $\sigma\in \Sym_{n+1}$ such that $\sigma.\ty(\bi)=(a_{\sigma^{-1}(1)},a_{\sigma^{-1}(2)},\dots,a_{\sigma^{-1}(n+1)})$ is of increasing type. Similarly, there exists $\tau\in \Sym_{n+1}$ making the sequence $\tau.\ty(\bj)$ increasing. Assuming $\ell(\bi)=k$ and $\ell(\bj)=l$,  we take $\bi^{\prime}=\widetilde{\sigma}(\bi)=(\widetilde{\sigma}(i_1),\widetilde{\sigma}(i_2),\dots,\widetilde{\sigma}(i_k))$, and $\bj^{\prime}=\widetilde{\tau}^{-1}(\bj)=(\widetilde{\tau}^{-1}(j_1),\widetilde{\tau}^{-1}(j_2),\dots,\widetilde{\tau}^{-1}(j_l))$. It can be verified that  $\bi^{\prime}$ and $\bj^{\prime}$ are  the standard sequences of increasing types $\sigma.\ty(\bi)$ and $\tau.\ty(\bj)$, respectively. Particularly, we have
	$ (\widetilde{\sigma}^{-1}, \widetilde{\tau})\ast \widetilde{\Phi}=\widetilde{\Phi}$ by the above symmetry property. Noting that $(\widetilde{\sigma}, \widetilde{\tau}^{-1}) (\bi,\bj)(\widetilde{\sigma}^{-1}, \widetilde{\tau}) =(\widetilde{\sigma}(\bi),\widetilde{\tau}^{-1}(\bj ) =(\bi^{\prime},\bj^{\prime})$, we have
	\[(\bi,\bj)\ast \widetilde{\Phi} = (\bi,\bj) (\widetilde{\sigma}^{-1}, \widetilde{\tau}) \ast \widetilde{\Phi} =(\widetilde{\sigma}^{-1}, \widetilde{\tau})(\bi^{\prime},\bj^{\prime})\ast \widetilde{\Phi}.  \]
	Since $(\widetilde{\sigma}^{-1}, \widetilde{\tau})\in H_{r_c}$, we obtain $(\bi,\bj)\ast \widetilde{\Phi}\in  [(\bi^{\prime},\bj^{\prime})\ast \widetilde{\Phi}]_{H_{r_c}}$ as desired.
\end{proof}

\subsubsection{Garnir relations}
We start by recalling some basic facts on Garnir relations.
Let $\ft$ be a tableau of shape $\la$. For $i < j$, let $X$ and $Y$ be  the $i$-th and $j$-th columns of $\ft$ respectively.
The \textit{Garnir element} \cite[\S 7.2]{J} associated with $\ft$ and $X,Y$ is
$
G_{X,Y}=\sum_{\sigma}\epsilon(\sigma)\sigma,
$
where the sum is over a set of the right coset representatives of  $\Sym\{X\}\times\Sym\{Y\}$ in $\Sym\{X\cup Y\}$. 

We will choose right coset representatives as follows.
For any $k$ such that $1\leq k\leq min(|X|, |Y|)$, we select elements $a_1,a_2,\dots,a_k$ from $X$ and $b_1,b_2,\dots,b_k$ from $Y$. Let $(a_i, b_i)$ denote the transposition in $\Sym\{X\cup Y\}$. Then the product $\prod_{i=1}^{k}(a_i, b_i)$ is the right coset representative of $\Sym\{X\}\times\Sym\{Y\}$ in $\Sym\{X\cup Y\}$.

{\color{red}	
}

\begin{thm} [\cite{J}]\label{thmGanirrel}
	Let $\ft$, $X$ and $Y$ be as above. If $|X\cup Y|$ is greater than the length of the $i$-th column of $\ft$, then $\fc_{\la}(\ft)G_{X,Y}=0$.
\end{thm}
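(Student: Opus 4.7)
The plan is to carry out a classical two-step reduction and then finish with a transposition-swap cancellation supplied by pigeonhole. First I would set $H := \Sym\{X\} \times \Sym\{Y\} \subseteq \Sym\{X \cup Y\}$ and $y_H := \sum_{h \in H} \epsilon(h) h$, and note that summing first over the left factor $H$ in the coset decomposition underlying $G_{X,Y}$ yields the identity
\[ \Sigma \;:=\; \sum_{\sigma \in \Sym\{X \cup Y\}} \epsilon(\sigma)\,\sigma \;=\; y_H \cdot G_{X, Y}. \]
Since $X$ and $Y$ are columns of $\ft$, we have $H \subseteq C(\ft)$, so $y_\la(\ft)\, y_H = |H|\, y_\la(\ft)$. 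Left-multiplying by $x_\la(\ft)$ then gives $\fc_\la(\ft) \Sigma = |H|\, \fc_\la(\ft) G_{X,Y}$, reducing the theorem to proving $\fc_\la(\ft) \Sigma = 0$.

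Next I would decompose the column stabiliser as $C(\ft) = H \times K$, where $K := \prod_{c \neq i, j} \Sym(\text{$c$-th column of $\ft$})$. Correspondingly $y_\la(\ft) = y_H \,y_K$, and because the support of $y_K$ is disjoint from $X \cup Y$ it commutes with $\Sigma$. Combined with the identity $y_H \Sigma = |H|\,\Sigma$ (immediate from the antisymmetry of $\Sigma$ under $H \subseteq \Sym\{X \cup Y\}$), this yields $y_\la(\ft)\Sigma = |H|\,\Sigma\, y_K$, and therefore
\[ \fc_\la(\ft) \Sigma \;=\; |H|\, x_\la(\ft)\, \Sigma\, y_K. \]
So the problem will further reduce to showing $x_\la(\ft) \Sigma = 0$.

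The last step is the pigeonhole argument: $X \cup Y$ lies in columns $i$ and $j$, hence in at most $\ell_i$ rows of $\ft$ (since $j > i$ forces the $j$-th column to have length $\ell_j \le \ell_i$), and the hypothesis $|X \cup Y| > \ell_i$ forces some row to contain two elements of $X \cup Y$. Because distinct elements of $X$ lie in distinct rows and similarly for $Y$, such a pair must consist of one element $a \in X$ and one element $b \in Y$ in the same row. Setting $\tau := (a,b)$, we get $\tau \in R(\ft) \cap \Sym\{X \cup Y\}$, so $x_\la(\ft)\, \tau = x_\la(\ft)$ while $\tau\, \Sigma = -\Sigma$. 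Combining these,
\[ x_\la(\ft)\, \Sigma \;=\; x_\la(\ft)(-\tau\, \Sigma) \;=\; -\bigl(x_\la(\ft)\, \tau\bigr)\Sigma \;=\; -x_\la(\ft)\, \Sigma, \]
whence $x_\la(\ft)\, \Sigma = 0$, finishing the argument.

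The only delicate point is the disjoint-support commutation between $y_K$ and $\Sigma$ in the middle step, which is where the assumption that $X$ and $Y$ are the \emph{entire} columns enters: with full columns the support of $y_K$ is genuinely disjoint from $X \cup Y$, and without this hypothesis one would instead have to exploit $(1+\tau)\Sigma = 0$ directly inside the larger column antisymmetriser, without the clean factorisation $C(\ft) = H \times K$.
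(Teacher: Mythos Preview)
The paper does not supply its own proof of this theorem: it is quoted as a classical result from James--Kerber \cite{J}, so there is no in-paper argument to compare against. Your proof is correct and is essentially the standard argument one finds in the literature, phrased in terms of Young symmetrisers rather than polytabloids. The three reductions --- from $\fc_\la(\ft)G_{X,Y}$ to $\fc_\la(\ft)\Sigma$ via $\Sigma=y_H G_{X,Y}$, then to $x_\la(\ft)\Sigma$ via the column--stabiliser factorisation, then to zero via the pigeonhole transposition $\tau$ --- are all sound.

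One remark worth making explicit: you correctly flag that the clean factorisation $C(\ft)=H\times K$ requires $X$ and $Y$ to be the \emph{entire} $i$-th and $j$-th columns, which is how the theorem is literally stated here. However, the paper's later applications (e.g.\ \lemref{lemPhiGarnir}) take $X$ and $Y$ to be proper subsets of columns. In that generality your middle step needs the adjustment you allude to: one no longer has $y_\la(\ft)=y_H y_K$, but one can still write $y_\la(\ft)=y_\la(\ft)\cdot\frac{1}{|H|}y_H$ (since $H\subseteq C(\ft)$) and then absorb $y_H$ into $\Sigma$ on the right, yielding $\fc_\la(\ft)\Sigma=\frac{1}{|H|}x_\la(\ft)y_\la(\ft)\Sigma$; but this is circular. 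The cleaner fix is to observe directly that $x_\la(\ft)\Sigma=0$ (your Step~4 goes through unchanged for subsets, since the pigeonhole only needs $|X\cup Y|>\ell_i$ and that entries of $X$, resp.\ $Y$, lie in distinct rows) and then note that $y_\la(\ft)\Sigma$ is a $\C$-linear combination of terms $\sigma\Sigma$ with $\sigma\in C(\ft)$; for each such $\sigma$, the element $\sigma\Sigma\sigma^{-1}$ is the antisymmetriser over $\sigma(X\cup Y)$, which again meets some row in two boxes, so $x_\la(\ft)\sigma\Sigma=0$ for every $\sigma\in C(\ft)$ individually. This is the form of the argument that covers the subset case used later in the paper.
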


\begin{exam}
	{\rm
		Assume that
		$
		\ft=
		\ytableausetup
		{boxsize=0.5cm}
		\begin{ytableau} 
		1&3&4\\
		2&5	
		\end{ytableau}
		$
		and $X=\{1,2\}$, $Y=\{3,5\}$. Then $G_{X,Y}=(1)-(23)-(25)-(13)-(15)+(13)(25)$, and it can be easily verified that  $\fc_{\la}(\ft)G_{X,Y}=0$.
	}
\end{exam}

We now use Garnir relations to derive relations among elements of $\Ker F_{r_c}^{r_c}$. For notational convenience, we shall adopt the labelling of Brauer diagram in \secref{sect:lablelling}. All the results in the previous sections can be translated to this setting by using the bijection $\pi$ defined in \eqref{eqpi}. For instance, $H_{r_c}=\Sym\{1,3,\dots,2r_c-1\}\times \Sym\{2,4,\dots,2r_c\}$.

In what follows,  we shall  assume that  $\bi$ and $\bj$ are  standard sequences of increasing types of equal length. Then $\bi$ (resp. $\bj$) is a sequence of odd (resp. even) integers from the set $\{1,3,\dots,2r_c-1 \}$ (resp. $\{2,4,\dots, 2r_c\}$). Recall that $\la_c=((2n+2)^{m+1})$, the odd (resp. even) columns of the standard tableau $\ft^{\la_c}$ are exactly filled with integers $\{1,3,\dots,2r_c-1 \}$ (resp. $\{2,4,\dots, 2r_c\}$). Therefore, we may identify $\bi$ (resp. $\bj$) as a sequence of odd (resp. even) column entries in $\ft^{\la_c}$.

Recall in \eqref{EqnwPhidef} that $w_{\mu_c}\in \Sym_{r_c}$, which can now be identified as the permutation on $\{1,3,\dots,2r_c-1\}$ (resp. $\{2,4,\dots,2r_c\}$), i.e., $w_{\mu_c}(2k-1)=2w_{\mu_c}(k)-1$ (resp. $w_{\mu_c}(2k)=2w_{\mu_c}(k)$) with $1\leq k\leq r_c$.  In particular, this restricts to a permutation on the sequence $\bi$ (resp. $\bj$), which is denoted by $w_{\mu_c}(\bi)$ (resp. $w_{\mu_c}(\bj)$). 
\begin{lem} \label{LemOrbPhi}
	We have
	$$\bU_{r_c}(\hat{A}^{ r_c}\circ \hat{\fc}_{\la_c}(w_{\mu_c}^{-1}(\bi),w_{\mu_c}^{-1}(\bj)))=(w_{\mu_c}^{-1}(\bi),w_{\mu_c}^{-1}(\bj))\ast \hat{\Phi}\in [\widetilde{\Phi}^{\bi}_{\bj}]_{H_{r_c}}.$$
\end{lem}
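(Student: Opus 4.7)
The plan is to verify the equality algebraically using the $\CA_{2r_c}$-equivariance of $\bU_{r_c}$, and then deduce the orbit membership via a conjugation identity relating $(w_{\mu_c}^{-1}(\bi),w_{\mu_c}^{-1}(\bj))$ to $(\bi,\bj)$ through the element $w=(w_{\mu_c},w_{\mu_c})\in H_{r_c}$.

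First I will interpret $\hat{\fc}_{\la_c}(\sigma)$ as the algebra product $\hat{\fc}_{\la_c}\cdot\sigma$ for the permutation $\sigma:=(w_{\mu_c}^{-1}(\bi),w_{\mu_c}^{-1}(\bj))\in\Sym_{2r_c}$ (this is consistent with the convention implicit in $\Phi_{\ft}=d(\ft)^{-1}\ast\hat{\Phi}$ coming from $\hat{\fc}_{\la_c} d(\ft)$ in \thmref{thmBasisMin}). By \lemref{AUmaps} and the definition \eqref{eq:action-1} of the $\CA_{2r_c}$-action,
\[
\bU_{r_c}\bigl(\hat{A}^{r_c}\circ\hat{\fc}_{\la_c}\cdot\sigma\bigr)=\sigma^{-1}\ast\bU_{r_c}(\hat{A}^{r_c}\circ\hat{\fc}_{\la_c})=\sigma^{-1}\ast\hat{\Phi}.
\]
Since $\bi$ consists of odd labels and $\bj$ of even labels, the transpositions $(w_{\mu_c}^{-1}(i_s),w_{\mu_c}^{-1}(j_s))$ have pairwise disjoint support, so $\sigma$ is an involution and $\sigma^{-1}=\sigma$. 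This gives the first equality of the lemma.

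For the orbit membership, I will rewrite $\hat{\Phi}=w^{-1}\ast\widetilde{\Phi}$ from \eqref{EqnwPhidef}, so that $\sigma\ast\hat{\Phi}=(\sigma w^{-1})\ast\widetilde{\Phi}$. The key observation is the conjugation identity: because $w$ acts as $w_{\mu_c}$ separately on the odd set $\{1,3,\dots,2r_c-1\}$ and on the even set $\{2,4,\dots,2r_c\}$, for each odd $i_s$ and even $j_s$ one has $w^{-1}(i_s,j_s)w=(w_{\mu_c}^{-1}(i_s),w_{\mu_c}^{-1}(j_s))$. Taking the product over $s$ gives $w^{-1}(\bi,\bj)w=\sigma$, equivalently $\sigma w^{-1}=w^{-1}(\bi,\bj)$. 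Hence
\[
\sigma\ast\hat{\Phi}=w^{-1}\ast\bigl((\bi,\bj)\ast\widetilde{\Phi}\bigr),
\]
and since $w^{-1}\in H_{r_c}$ this element lies in $[(\bi,\bj)\ast\widetilde{\Phi}]_{H_{r_c}}$, which equals $[\widetilde{\Phi}^{\bi}_{\bj}]_{H_{r_c}}$ by the identity $[D^{\bi}_{\bj}]_{H_r}=[(\bi,\bj)\ast D]_{H_r}$ recorded after \defref{defnDij}.

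The only real obstacle is bookkeeping: $w_{\mu_c}$ is identified with two distinct permutations, one on the odd labels and one on the evens via $2k-1\leftrightarrow k$ and $2k\leftrightarrow k$, and one must check that the transpositions $(i_s,j_s)$ mixing the two sets conjugate correctly under this identification. Once this is set, the conjugation identity is a one-line calculation and the remainder of the argument is formal manipulation with the $\CA_{2r_c}$-module structure established in Section 3.1.
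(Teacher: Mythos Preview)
Your proof is correct and follows essentially the same route as the paper: both arguments hinge on the conjugation identity $w^{-1}(\bi,\bj)w=(w_{\mu_c}^{-1}(\bi),w_{\mu_c}^{-1}(\bj))$ together with $\widetilde{\Phi}=w\ast\hat{\Phi}$ and $w^{-1}\in H_{r_c}$, and the orbit equality $[\widetilde{\Phi}^{\bi}_{\bj}]_{H_{r_c}}=[(\bi,\bj)\ast\widetilde{\Phi}]_{H_{r_c}}$. Your write-up is somewhat more explicit (separating the first equality and noting that $\sigma$ is an involution), but the content is the same as the paper's displayed computation \eqref{EqnOrbPhi}.
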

\begin{proof}
	Recalling that $\widetilde{\Phi}=w\ast\hat{\Phi}$ with $w=(w_{\mu_c},w_{\mu_c})\in H_{r_c}$, we obtain
	\begin{equation}\label{EqnOrbPhi}
	w\inv(\bi,\bj)\ast\widetilde{\Phi}=w\inv(\bi,\bj)w\ast\hat{\Phi}=(w_{\mu_c}^{-1}(\bi),w_{\mu_c}^{-1}(\bj))\ast\bU_{r_c}(\hat{A}^{ r_c}\circ \hat{\fc}_{\la_c}).
	\end{equation}
	Using this,  we have $\bU_{r_c}(\hat{A}^{ r_c}\circ \hat{\fc}_{\la_c}(w_{\mu_c}^{-1}(\bi),w_{\mu_c}^{-1}(\bj)))\in [\widetilde{\Phi}^{\bi}_{\bj}]_{H_{r_c}}$	since $(\bi,\bj)\ast\widetilde{\Phi}\in [\widetilde{\Phi}^{\bi}_{\bj}]_{H_{r_c}}$.
\end{proof}

Recall that $\bi_p$ is the $p$-th piece of the sequence $\bi$. 	Let $\ty(\bi)=(a_1, a_2, \dots, a_{n+1})$ and $\ty(\bj)=(b_1, b_2, \dots, b_{n+1})$. Then it can be verified that $w_{\mu_c}^{-1}(\bi_p)$ (resp. $w_{\mu_c}^{-1}(\bj_q)$) is a sequence of  the first $a_p$-th (resp. $b_q$-th)  entries of the $2p-1$-th (resp. $2q$-th) column of $\ft^{\la_c}$; see \exref{examOSp12}. Then the following lemma is immediate by \thmref{thmGanirrel}.

\begin{lem}\label{lemPhiGarnir}
	Maintain the above notation. If $a_p+b_q>m+1$ for some $1\leq p, \ q\leq n+1$, then we have the following Garnir relation
	$$\bU_{r_c}(\hat{A}^{ r_c}\circ \hat{\fc}_{\la_c}G_{X_p,Y_q})=(G_{X_p,Y_q})^{\sharp}\ast \hat{\Phi}=0,$$
	where $X_p=w_{\mu_c}^{-1}(\bi_p) $  and $Y_q=w_{\mu_c}^{-1}(\bj_q)$.
\end{lem}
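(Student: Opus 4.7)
The plan is to verify that $X_p$ and $Y_q$ sit in two specific columns of $\ft^{\la_c}$ of length $m+1$ each, and then invoke Theorem \ref{thmGanirrel} together with Lemma \ref{LemOrbPhi}.

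First I would pin down what $X_p$ and $Y_q$ are inside $\ft^{\la_c}$. Recall $\la_c=((2n+2)^{m+1})$ is filled row-by-row, so column $2p-1$ contains the odd integers $\{2(k-1)(n+1)+2p-1: k=1,\dots,m+1\}$ and column $2q$ contains the even integers $\{2(k-1)(n+1)+2q: k=1,\dots,m+1\}$. Since $\mu_c=((n+1)^{m+1})$, the permutation $w_{\mu_c}$ satisfies $w_{\mu_c}\big((i-1)(n+1)+j\big)=(j-1)(m+1)+i$, whence $w_{\mu_c}^{-1}\big((j-1)(m+1)+i\big)=(i-1)(n+1)+j$. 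The $p$-th piece $\bi_p$ consists of the first $a_p$ odd vertices of the $p$-th top block, namely the odd integers corresponding to $(p-1)(m+1)+k$ for $k=1,\dots,a_p$ in the halved index. Applying $w_{\mu_c}^{-1}$ and doubling back, one obtains $X_p=\{2(k-1)(n+1)+2p-1:k=1,\dots,a_p\}$, i.e.\ the first $a_p$ entries of column $2p-1$ of $\ft^{\la_c}$. The same computation with $\bj_q$ gives that $Y_q$ is the first $b_q$ entries of column $2q$ of $\ft^{\la_c}$.

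Next I would apply Garnir. Since $X_p$ and $Y_q$ live in distinct columns (one odd-indexed, one even-indexed) of $\ft^{\la_c}$, and every column of $\la_c$ has length exactly $m+1$, the hypothesis $a_p+b_q>m+1$ says precisely that $|X_p\cup Y_q|$ exceeds the length of the earlier of the two columns. Theorem \ref{thmGanirrel} then yields $\fc_{\la_c}\,G_{X_p,Y_q}=0$, and so $\hat{\fc}_{\la_c}\,G_{X_p,Y_q}=0$ since $\hat{\fc}_{\la_c}$ is a nonzero scalar multiple of $\fc_{\la_c}$.

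To finish, I would transport this vanishing to $\hat{\Phi}$ via the same device used in Lemma \ref{LemOrbPhi}: since $\mathbb{A}_{r_c}(\hat{\Phi})=\hat{A}^{r_c}\circ\hat{\fc}_{\la_c}$ and the $\CA_{2r_c}$-action is given by $\alpha\ast D=\bU_{r_c}(\mathbb{A}_{r_c}(D)\circ\alpha^{\sharp})$, taking $\alpha=(G_{X_p,Y_q})^{\sharp}$ gives
\[
(G_{X_p,Y_q})^{\sharp}\ast\hat{\Phi}=\bU_{r_c}\!\big(\hat{A}^{r_c}\circ\hat{\fc}_{\la_c}\,G_{X_p,Y_q}\big)=0.
\]
The whole proof is almost mechanical once the identification of $X_p,Y_q$ as initial column segments of $\ft^{\la_c}$ is established; that bookkeeping step—tracing the effect of $w_{\mu_c}^{-1}$ through the doubled labelling convention—is the only place where care is genuinely needed.
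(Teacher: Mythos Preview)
Your proof is correct and follows the same approach as the paper. The paper presents this lemma as immediate from \thmref{thmGanirrel} after stating (in the paragraph preceding the lemma) that $w_{\mu_c}^{-1}(\bi_p)$ and $w_{\mu_c}^{-1}(\bj_q)$ are precisely the first $a_p$ and $b_q$ entries of columns $2p-1$ and $2q$ of $\ft^{\la_c}$; you have simply supplied the explicit index computation verifying that identification and spelled out the translation to $\hat{\Phi}$ via the $\CA_{2r_c}$-action.
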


\begin{exam}\label{examOSp12}{\rm $(\OSp(1|2))$
		Let $m=n=1$. Then $\la_c=(4,4)$, $\mu_c=(2,2)$,  $r_c=4$. 
		Recall that
		$
		\ft^{\la_c}=
		\ytableausetup
		{boxsize=0.5cm}
		\begin{ytableau}
		1&2&3&4\\
		5&6&7&8	
		\end{ytableau}
		$.
		Then $w_{\mu_c}^{-1}=(23)$ can be viewed as a permutation on the top labelling set $\{1,3,5,7\}$ or bottom labelling set  $\{2,4,6,8\}$, i.e., 
		$$
		\begin{aligned}
		w_{\mu_c}^{-1}(1)=1,\quad  w_{\mu_c}^{-1}(3)=5, \quad w_{\mu_c}^{-1}(5)=3,\quad w_{\mu_c}^{-1}(7)=7,\\
		w_{\mu_c}^{-1}(2)=2, \quad w_{\mu_c}^{-1}(4)=6,\quad w_{\mu_c}^{-1}(6)=4,\quad w_{\mu_c}^{-1}(8)=8.
		\end{aligned}
		$$		
		If we take $\bi=(5,7)$ and $\bj=(2,6)$, then $\ty(\bi)=(0,2)$, $\ty(\bj)=(1,1)$ and $w_{\mu_c}^{-1}(\bi)=\{3,7\}$, $w_{\mu_c}^{-1}(\bj)=\{2,4\}$.
		Hence we  have  $X_2=w_{\mu_c}^{-1}(\bi_2)=\{3,7\}$ and $Y_2=w_{\mu_c}^{-1}(\bj_2)=\{4\}$. The corresponding Garnir element is $G_{X_2,Y_2}=(1)-(34)-(47)$. By  \lemref{lemPhiGarnir}, we have the Garnir relation
		\[ \hat{\Phi}-(34)\ast \hat{\Phi}-(47)\ast\hat{\Phi}=0.\]
		
		Note that $(w_{\mu_c}^{-1}(\bi), w_{\mu_c}^{-1}(\bj))=(23)(47)$. Using this relation  and \lemref{LemOrbPhi}, we have 
		\begin{equation}\label{eqngen}
		(23)\ast \hat{\Phi}-(234)\ast \hat{\Phi}=(23)(47)\ast \hat{\Phi}\in [\widetilde{\Phi}^{\bi}_{\bj}]_{H_{r_c}},
		\end{equation}
		where $\ty(\bi)=(0,2)$ and $\ty(\bj)=(1,1)$. We want to show that $\widetilde{\Phi}^{\bi}_{\bj}$ is generated by $\widetilde{\Phi}^{\bi^\prime}_{\bj^\prime}$ with $\ty(\bi^\prime)=\ty(\bj^\prime)=(0,1)$. By \eqref{eqngen} it suffices to show that $ (23)\ast \hat{\Phi}$ and $ (234)\ast \hat{\Phi}$ belong to $[\widetilde{\Phi}^{\bi^\prime}_{\bj^\prime}]_{H_{r_c}}$. The former is true by  \lemref{lemincty} and \lemref{LemOrbPhi}, so is the latter since $(234)\ast \hat{\Phi}=(24)(23)\ast \hat{\Phi}$ and $(24)\in H_{r_c}$.}	
\end{exam}

We are inspired by \exref{examOSp12} to introduce the following key lemma.

\begin{lem}\label{propPhigen}
	Let $\ty(\bi)=(a_1,a_2,\dots,a_{n+1})$ and $\ty(\bj)=(b_1,b_2,\dots,b_{n+1})$. We define
	\begin{equation}\label{eq:Genset2}
	\CI_{r_c}:=
	\Biggl\{ \{\bi,\bj\}\Biggm|
	\begin{matrix}
	&\text{$\bi,\bj$ are standard sequences of increasing types}\\
	&\text{ with equal length such that $a_{n+1}+b_{n+1}\leq m+1$}\\
	\end{matrix}
	\Biggr\}.
	\end{equation}
	Then for any standard sequences $\bi$ and $\bj$ of increasing types, the element 	
	$\widetilde{\Phi}_{\bj}^{\bi}$  belongs to the 2-sided ideal  in $B_{r_c}^{r_c}(m-2n)$ generated  by  the set $\{\widetilde{\Phi}_{\bj}^{\bi} \mid \{\bi,\bj\}\in \CI_{r_c}\}$.
\end{lem}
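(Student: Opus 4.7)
I shall induct on the common length $\ell := \ell(\bi) = \ell(\bj)$. The base case $\ell = 0$ is trivial: then $a_{n+1}+b_{n+1}=0\leq m+1$, so $\{\bi,\bj\} \in \CI_{r_c}$. For the inductive step, if already $a_{n+1} + b_{n+1} \leq m+1$ there is nothing to prove; otherwise the Garnir relation of Lemma \ref{lemPhiGarnir} applied with $p=q=n+1$ (valid since $|X_{n+1}\cup Y_{n+1}| = a_{n+1}+b_{n+1} > m+1$ exceeds the column length of $\ft^{\la_c}$) yields
\[
\hat{\Phi} \;+\; \sum_{\sigma \neq 1}\epsilon(\sigma)\,\sigma^{-1} \ast \hat{\Phi} \;=\; 0,
\]
where the sum is over non-identity right coset representatives $\sigma = \prod_{s=1}^{k}(c_s,d_s)$ of $\Sym\{X_{n+1}\}\times\Sym\{Y_{n+1}\}$ in $\Sym\{X_{n+1}\cup Y_{n+1}\}$, with $c_s\in X_{n+1}$, $d_s\in Y_{n+1}$, $1\leq k\leq\min(a_{n+1},b_{n+1})$. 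Acting by $\rho := (w_{\mu_c}^{-1}(\bi),w_{\mu_c}^{-1}(\bj))$ on the left via the $\ast$-action and using Lemma \ref{LemOrbPhi}, the identity term becomes a representative of $[\widetilde{\Phi}^{\bi}_{\bj}]_{H_{r_c}}$, while each other term becomes $\tau_\sigma\ast\hat{\Phi}$ with $\tau_\sigma := \rho\sigma^{-1}$.

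\textbf{Key computation.} Let $O := \{1,3,\ldots,2r_c-1\}$ and $E := \{2,4,\ldots,2r_c\}$; left $H_{r_c}$-cosets in $\Sym_{2r_c}$ are parameterized by $\tau^{-1}(O)$. Setting $A := w_{\mu_c}^{-1}(\bi) \subseteq O$ and $B := w_{\mu_c}^{-1}(\bj)\subseteq E$, one has $\rho(O) = (O\setminus A) \cup B$, and since $\sigma$ acts only on elements of $X_{n+1}\cup Y_{n+1}\subseteq A\cup B$ by swapping $c_s\leftrightarrow d_s$,
\[
\tau_\sigma^{-1}(O) \;=\; \sigma\bigl((O\setminus A)\cup B\bigr) \;=\; \bigl[O\setminus(A\setminus\{c_s\}_s)\bigr]\,\cup\,(B\setminus\{d_s\}_s).
\]
Hence $\tau_\sigma$ is in the same left $H_{r_c}$-coset as $(w_{\mu_c}^{-1}(\bi^{(\sigma)}),w_{\mu_c}^{-1}(\bj^{(\sigma)}))$, where $\bi^{(\sigma)} := \bi\setminus w_{\mu_c}(\{c_s\}_s)$ and $\bj^{(\sigma)} := \bj\setminus w_{\mu_c}(\{d_s\}_s)$ are obtained from $\bi,\bj$ by deleting $k$ entries from their $(n+1)$-th pieces; thus $\ell(\bi^{(\sigma)})=\ell-k<\ell$. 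By Lemma \ref{LemOrbPhi}, $\tau_\sigma \ast \hat{\Phi} \in [\widetilde{\Phi}^{\bi^{(\sigma)}}_{\bj^{(\sigma)}}]_{H_{r_c}}$; by the antisymmetry of $A(m+1)$ and Lemma \ref{lemincty}, this is an $H_{r_c}$-translate of $\widetilde{\Phi}^{\widetilde{\bi^{(\sigma)}}}_{\widetilde{\bj^{(\sigma)}}}$ for standard sequences $\widetilde{\bi^{(\sigma)}},\widetilde{\bj^{(\sigma)}}$ of increasing type and common length $\ell-k$, to which the induction hypothesis applies.

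\textbf{Conclusion and main obstacle.} Since the $H_{r_c}$-action on $B_{r_c}(m-2n)$ is given by $(h_1,h_2)\ast D = h_1 D h_2^{-1}$ with $h_1,h_2\in\Sym_{r_c}\subset B_{r_c}(m-2n)$, every 2-sided ideal of $B_{r_c}(m-2n)$ is $H_{r_c}$-stable. Hence each $\tau_\sigma\ast\hat{\Phi}$ lies in the 2-sided ideal generated by $\{\widetilde{\Phi}^{\bi'}_{\bj'}:\{\bi',\bj'\}\in\CI_{r_c}\}$, and therefore so does $\widetilde{\Phi}^{\bi}_{\bj}$. The main technical hurdle is the coset identification in the key computation: one must track carefully how the Garnir swap $\sigma$ interacts with the transposition-product $\rho$, in order to verify that $\tau_\sigma$ decomposes as claimed and that exactly $k$ entries are removed from the $(n+1)$-th pieces (leaving all the other pieces unchanged), thereby producing the required decrease in $\ell$.
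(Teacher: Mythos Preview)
Your argument is correct and in the same spirit as the paper's, but it is organized differently, and in fact somewhat more cleanly. The paper inducts on $a_{n+1}+b_{n+1}$: it singles out from $G_{X_{n+1},Y_{n+1}}$ the coset representative $(w_{\mu_c}^{-1}(\bar\bi),w_{\mu_c}^{-1}(\bar\bj))$ that swaps the whole of $Y_{n+1}$ into $X_{n+1}$, so that multiplying by $(w_{\mu_c}^{-1}(\bar\bi^{\,c}),w_{\mu_c}^{-1}(\bar\bj^{\,c}))$ reproduces $(w_{\mu_c}^{-1}(\bi),w_{\mu_c}^{-1}(\bj))\ast\hat\Phi$. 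The remaining terms then have $\ell(\bi')<\ell(\bi)$ but only $a'_{n+1}+b'_{n+1}\le a_{n+1}+b_{n+1}$, which forces the auxiliary argument around \eqref{eqab} (``repeated Garnir steps'') to eventually achieve strict inequality in the induction parameter.

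You instead induct on $\ell$ and isolate the identity term of $G_{X_{n+1},Y_{n+1}}$. Every non-identity representative $\sigma=\prod_{s=1}^{k}(c_s,d_s)$ with $k\ge 1$ then yields a $\tau_\sigma$ whose right $H_{r_c}$-coset corresponds to sequences with $k$ entries deleted from the $(n+1)$-th pieces, so $\ell$ drops strictly and the induction closes in one step. Your coset bookkeeping via $\tau_\sigma^{-1}(O)$ is correct; note only that Lemma~\ref{LemOrbPhi} is stated in the paper under the standing assumption that $\bi,\bj$ are standard of increasing type, whereas you apply it to the possibly non-standard $\bi^{(\sigma)},\bj^{(\sigma)}$. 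This is harmless since the proof of Lemma~\ref{LemOrbPhi} uses only the identity $w^{-1}(\bi,\bj)w=(w_{\mu_c}^{-1}(\bi),w_{\mu_c}^{-1}(\bj))$ and the general fact $(\bi,\bj)\ast\widetilde\Phi\in[\widetilde\Phi^{\bi}_{\bj}]_{H_{r_c}}$, both valid for arbitrary sequences; the antisymmetry of $A(m+1)$ and Lemma~\ref{lemincty} then reduce to standard increasing sequences exactly as you indicate.
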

\begin{proof}
	Given any $\bi$ and $\bj$, we use induction on $a_{n+1}+b_{n+1}$. When $a_{n+1}+b_{n+1}\leq m+1$, there is nothing to prove.  Let $t=a_{n+1}$,
	$s=b_{n+1}$, and $\ell(\bi)=\ell(\bj)=k\leq r_c$.
	Without loss of generality, we may assume that  $t\geq s$ and $s+t>m+1$. Let
	$$
	\begin{aligned}
	&\bar{\bi}=(i_{k-s+1},i_{k-s+2},\dots,i_k), \quad \bar{\bi}^{c}=(i_1,i_2,\dots,i_{k-s}),\quad \bi= \bar{\bi}^{c}\cup \bar{\bi},\\
	&\bar{\bj}=(j_{k-s+1},j_{k-s+2},\dots,j_k), \quad \bar{\bj}^{c}=(j_1,j_2,\dots,j_{k-s}),\quad \bj=\bar{\bj}^{c}\cup \bar{\bj}.\\
	\end{aligned}
	$$
	Let $X_{n+1}=w^{-1}_{\mu_c}(\bi_{n+1})=\{w^{-1}_{\mu_c}(i)\;|\;i=i_{k-t+1},\dots,i_k\}$, $Y_{n+1}==w^{-1}_{\mu_c}(\bj_{n+1})=\{w^{-1}_{\mu_c}(j)\;|\;j=j_{k-s+1},\dots,j_k\}$.
	Then the corresponding Garnir element is
	$$G_{X_{n+1},Y_{n+1}}=\sum_{\sigma\in R} \epsilon(\sigma)\sigma=(-1)^s(w^{-1}_{\mu_c}(\bar{\bi}),w^{-1}_{\mu_c}(\bar{\bj}))+\sum_{\sigma\in R^{\prime}} \epsilon(\sigma)\sigma,$$
	where $R$ is the set of right coset representatives of  $\Sym\{X_{n+1}\}\times\Sym\{Y_{n+1}\}$ in $\Sym\{X_{n+1}\cup Y_{n+1}\}$, and $R^{\prime}=R\backslash \{(w^{-1}_{\mu_c}(\bar{\bi}),w^{-1}_{\mu_c}(\bar{\bj})) \}$. 	
	Using \lemref{lemPhiGarnir}, we obtain
	$$
	(w^{-1}_{\mu_c}(\bar{\bi}),w^{-1}_{\mu_c}(\bar{\bj}))\ast \hat{\Phi}=(-1)^{s+1}\sum_{\sigma\in R^{\prime}}\epsilon(\sigma)\sigma \ast \hat{\Phi}.
	$$
	Since $ (w^{-1}_{\mu_c}(\bi),w^{-1}_{\mu_c}(\bj))=(w^{-1}_{\mu_c}(\bar{\bi}^{c}),w^{-1}_{\mu_c}(\bar{\bj}^c))(w^{-1}_{\mu_c}(\bar{\bi}),w^{-1}_{\mu_c}(\bar{\bj}))$, we have
	\begin{equation}\label{eqnind}
	(w^{-1}_{\mu_c}(\bi),w^{-1}_{\mu_c}(\bj))\ast \hat{\Phi}=(-1)^{s+1}\sum_{\sigma\in R^{\prime}}\epsilon(\sigma)  (w^{-1}_{\mu_c}(\bar{\bi}^{c}),w^{-1}_{\mu_c}(\bar{\bj}^c))\sigma \ast \hat{\Phi}\in [\hat{\Phi}^{\bi}_{\bj}]_{H_{r_c}}.
	\end{equation}
	
	Now by \eqref{eqnind} it is enough to show that $(w^{-1}_{\mu_c}(\bar{\bi}^{c}),w^{-1}_{\mu_c}(\bar{\bj}^c))\sigma \ast \hat{\Phi}$ is generated by the elements $\widetilde{\Phi}_{\bj}^{\bi}$ with $ \{\bi,\bj\}\in \CI_{r_c}$ for any $\sigma\in R^{\prime}$.  Suppose that 
	$(w^{-1}_{\mu_c}(\bar{\bi}^{c}),w^{-1}_{\mu_c}(\bar{\bj}^c))\sigma \ast \hat{\Phi} \in [\widetilde{\Phi}^{\bi^{\prime}}_{\bj^{\prime}}]_{H_{r_c}},$
	where $\bi^{\prime},\bj^{\prime}$ are standard sequences of increasing types with  $\ty(\bi^{\prime})=(a_1^{\prime},\dots,a_{n+1}^{\prime})$ and 	$\ty(\bj^{\prime})=(b_1^{\prime},\dots,b_{n+1}^{\prime})$. Then we have $\ell(\bi^{\prime})=\ell(\bj^{\prime})<\ell(\bi)=\ell(\bj)$,  $a_{n+1}^{\prime}+b_{n+1}^{\prime}\leq a_{n+1}+b_{n+1}$, and hence
	\begin{equation}\label{eqab}
	a_{n+1}^{\prime}+b_{n+1}^{\prime}\leq \ell(\bi^{\prime})+\ell(\bj^{\prime})<\ell(\bi)+\ell(\bj)=\sum_{p=1}^{n+1}(a_p+b_p).
	\end{equation}
	We can always assume that $a_{n+1}^{\prime}+b_{n+1}^{\prime}< a_{n+1}+b_{n+1}$, since when the equality happens we use Garnir relations again as above for $\bi^{\prime},\bj^{\prime}$, and then by \eqnref{eqab} after finite repeated steps we stop at  $a_{n+1}^{\prime\prime}+b_{n+1}^{\prime\prime}< a_{n+1}+b_{n+1}$ for some $\bi^{\prime\prime}$ and $\bj^{\prime\prime}$. Thus, by induction each summand on the right hand side of $\eqnref{eqnind}$ is generated by $\{ \widetilde{\Phi}_{\bj}^{\bi} \mid \{\bi,\bj\}\in \CI_{r_c}\}$, so is $\hat{\Phi}^{\bi}_{\bj}$.
\end{proof}

\begin{thm}\label{thmKermini}
	Let $r_c=(m+1)(n+1)$. Then $\Ker F_{r_c}^{r_c}$ as a 2-sided ideal of $B_{r_c}(m-2n)$ is generated by the set $\{\widetilde{\Phi}_{\bj}^{\bi} \mid \{\bi,\bj\}\in \CI_{r_c}\}$.
\end{thm}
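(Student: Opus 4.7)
The plan is to chain together the reductions that have been set up in the preceding lemmas, starting from the overcomplete generating set given by Proposition~\ref{PropPsiij} and pruning it in three successive stages down to the indexed family $\CI_{r_c}$.

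First I would recall from Proposition~\ref{PropPsiij} that
\[
\Ker F_{r_c}^{r_c}=\bigl\langle \widetilde{\Phi}_{\bj}^{\bi} \bigm| \ell(\bi)=\ell(\bj)\le r_c\bigr\rangle_{r_c},
\]
so it suffices to show every such $\widetilde{\Phi}_{\bj}^{\bi}$ lies in the $2$-sided ideal $J:=\langle \widetilde{\Phi}_{\bj}^{\bi} \mid \{\bi,\bj\}\in \CI_{r_c}\rangle_{r_c}$. Observe that $J$ is automatically stable under the $H_{r_c}$-action on $B_{r_c}^{r_c}(m-2n)$: for $h=(\sigma_1,\sigma_2)\in H_{r_c}$ and any $D\in \Ker F_{r_c}^{r_c}$ one has $h\ast D=\sigma_1\circ D\circ \sigma_2^{-1}$ by Remark~\ref{rmkoddevenact}, and $\sigma_1,\sigma_2$ belong to the Brauer algebra as elements of the symmetric subgroup. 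Hence any $H_{r_c}$-orbit is contained in $J$ as soon as one representative is.

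Next I would perform the first reduction to \emph{standard} sequences. For arbitrary $\bi,\bj$ of equal length, the totally antisymmetric factor $A(m+1)^{\otimes(n+1)}$ appearing in the factorization \eqref{EqnwPhi} of $\widetilde{\Phi}$ together with \eqref{eqsympro} forces $\widetilde{\Phi}_{\bj}^{\bi}=\pm\widetilde{\Phi}_{\bj'}^{\bi'}$ whenever $\bi',\bj'$ have the same types as $\bi,\bj$; in particular we may take $\bi',\bj'$ to be the standard sequences of those types. Then the second reduction is Lemma~\ref{lemincty}: by the symmetry property $(\widetilde{\sigma}_1,\widetilde{\sigma}_2)\ast\widetilde{\Phi}=\widetilde{\Phi}$ for $\sigma_1,\sigma_2\in \Sym_{n+1}$, any standard $\bi',\bj'$ can be moved, within its $H_{r_c}$-orbit, to standard sequences $\bi'',\bj''$ of increasing type. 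By $H_{r_c}$-stability of $J$, it remains to show every $\widetilde{\Phi}_{\bj}^{\bi}$ with $\bi,\bj$ standard of increasing type lies in $J$.

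The final stage is exactly Lemma~\ref{propPhigen}, which states that any such $\widetilde{\Phi}_{\bj}^{\bi}$ lies in the $2$-sided ideal generated by the subfamily indexed by $\CI_{r_c}$; this is the content of the Garnir-relation argument already carried out and is where the real combinatorial work sits. Stitching the three reductions together yields $\Ker F_{r_c}^{r_c}\subseteq J$, and the reverse containment $J\subseteq \Ker F_{r_c}^{r_c}$ is immediate since each $\widetilde{\Phi}_{\bj}^{\bi}\in \Ker F_{r_c}^{r_c}$. The only nontrivial step is verifying that the $H_{r_c}$-moves invoked in stages one and two genuinely lie inside the $2$-sided ideal rather than merely the $\CA_{2r_c}$-submodule, but this follows transparently from Remark~\ref{rmkoddevenact} since $H_{r_c}$ acts by left and right Brauer-algebra multiplication.
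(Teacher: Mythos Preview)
Your proposal is correct and follows essentially the same approach as the paper's own proof, which simply chains together Proposition~\ref{PropPsiij}, Lemma~\ref{lemincty}, and Lemma~\ref{propPhigen}. You have spelled out in more detail two points the paper leaves implicit: the reduction from arbitrary sequences to standard sequences of the same type (the unnamed lemma preceding the definition of standard sequences), and the observation that $H_{r_c}$-stability of the ideal $J$ follows from Remark~\ref{rmkoddevenact}, so that orbit-level reductions genuinely stay inside the $2$-sided ideal.
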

\begin{proof}
	Using \lemref{lemincty} and \propref{PropPsiij}, we conclude that $\Ker F_{r_c}^{r_c}$ is generated by the elements $\widetilde{\Phi}_{\bj}^{\bi}$ with
	standard sequences ${\bi}$ and ${\bj}$ of increasing types. Then our assertion immediately follows from   \lemref{propPhigen}.
\end{proof}

\section{SFT in the general case}\label{Secgeneral}
Let $r\geq r_c$.  Recall that  there is a canonical embedding $B_{r_c}(m-2n)\hookrightarrow B_{r}(m-2n)$ (see \rmkref{rmk:embed}). In  this section, we shall show that  the embedding of $\Ker F_{r_c}^{r_c}$ generates $\Ker F_r^r$ as a $2$-sided ideal of $B_r(m-2n)$.

\subsection{Module structure of $\Ker F_r^r$}\label{SecAlgStrKer}
We  shall prove  that $\Ker F_r^r$  is singly generated as left $\CA_{2r}$-module. We need a lemma on product of Young symmetrisers as follows.
\begin{lem}\label{lemProdYSym}
	\cite[Theorem 1.1]{R}
	Given positive integers $n\leq k$, let $k\dashv \la \supseteq\mu\vdash n$ be partitions, and let $\ft$ be a Young tableau of shape $\la$ containing a Young subtableau $\fs$ of shape $\mu$. There exists a subset of permutations $L(\ft;\fs)\subset \Sym_k$  and  $m_{\sigma}\in \Q$ such that
	$$\fc_{\la}(\ft)\fc_{\mu}(\fs)=\fc_{\la}(\ft)\sum_{\sigma\in L(\ft;\fs)}m_{\sigma}\sigma\neq 0.$$	
\end{lem}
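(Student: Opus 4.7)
My plan is to address the equality and the nonvanishing separately, since the equality is essentially a bookkeeping statement while the nonvanishing is the substantive content of the lemma.

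For the equality, expand $\fc_\mu(\fs) = x_\mu(\fs)y_\mu(\fs) = \sum_\sigma m_\sigma \sigma$ as a $\Q$-linear combination of permutations in $R(\fs)\,C(\fs) \subseteq \Sym_k$, and take $L(\ft;\fs)$ to be its support with $m_\sigma$ the corresponding rational coefficients. The equality then holds tautologically. If one wishes to further reduce $L(\ft;\fs)$, the absorption identity $\fc_\la(\ft)\tau = \epsilon(\tau)\fc_\la(\ft)$ for $\tau \in C(\ft) \supseteq C(\fs)$ allows one to pull any right factor lying in $C(\fs)$ into $\fc_\la(\ft)$ at the cost of a sign, which shrinks the support to (at most) a set of representatives in $R(\fs)$.

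For the nonvanishing $\fc_\la(\ft)\fc_\mu(\fs) \neq 0$, I would use the branching rule for the symmetric group. Let $H \subseteq \Sym_k$ be the subgroup permuting the $n$ entries appearing in $\fs$, and let $H' \subseteq \Sym_k$ be the pointwise stabiliser of those entries, so $H \cong \Sym_n$ and $H' \cong \Sym_{k-n}$. The right ideal $\fc_\la(\ft)\CA_k$ is a copy of the right Specht module $\widetilde{S^\la}$ (cf.\ \S\ref{sect:sym}). Restricting to $H \times H'$ and applying the Littlewood--Richardson rule gives
\[
\widetilde{S^\la}\Big|_{H \times H'} \;\cong\; \bigoplus_{\kappa \vdash n,\,\nu \vdash k-n} c^\la_{\kappa,\nu}\, \widetilde{S^\kappa} \boxtimes \widetilde{S^\nu}.
\]
Since $\mu \subseteq \la$ with $|\la/\mu| = k-n$, the skew shape $\la/\mu$ admits a Littlewood--Richardson filling, so $c^\la_{\mu,\nu} \geq 1$ for some $\nu \vdash k-n$, and hence $\widetilde{S^\mu}$ occurs as a summand of $\widetilde{S^\la}|_H$. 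By \rmkref{rmkYoungSym}, $h_\mu^{-1}\fc_\mu(\fs)$ is a primitive idempotent in $\C H$ whose minimal right ideal realises $\widetilde{S^\mu}$, so by semisimplicity of $\C H$ it acts nontrivially on every copy of $\widetilde{S^\mu}$ appearing in $\widetilde{S^\la}|_H$. This forces $\fc_\la(\ft)\fc_\mu(\fs) \neq 0$.

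The main obstacle I anticipate is pinning down the last step: one needs not merely that the $\widetilde{S^\mu}$-isotypic component of $\widetilde{S^\la}|_H$ is nonzero, but that the specific quasi-idempotent $\fc_\mu(\fs)$ --- rather than the central idempotent $\fP_\mu$ --- witnesses this. This is a standard consequence of the Wedderburn decomposition of $\C H$, since every primitive idempotent in a given Wedderburn block has full-rank image on each simple summand, but it is the one place where genuine care is needed. An alternative, more combinatorial route (closer in flavour to the cited source \cite{R}) would be to single out a specific $\sigma \in L(\ft;\fs)$ whose coefficient $m_\sigma$ can be evaluated explicitly via hook data over $\la/\mu$, thereby producing the rational constants and a nonzero witness simultaneously.
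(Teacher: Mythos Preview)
First, note that the paper does not itself prove this lemma; it is quoted as \cite[Theorem~1.1]{R} and used as a black box in the proof of \lemref{lemfcexp}. So there is no in-paper argument to compare against, and your proposal should be measured against Raicu's original proof.

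Your handling of the displayed equality is fine and, as you say, essentially tautological. The nonvanishing argument, however, has a genuine gap---and it is not the obstacle you flag. Your Littlewood--Richardson step together with the Wedderburn remark establishes that right multiplication by $\fc_\mu(\fs)$ is a \emph{nonzero operator} on $\widetilde{S^\la}|_H$: on each copy of $\widetilde{S^\mu}$ the primitive idempotent $h_\mu^{-1}\fc_\mu(\fs)$ acts as a rank-one projection, so the operator has nonzero image. But a rank-one projection on a simple summand has a codimension-one kernel, and nothing in your argument excludes the specific cyclic generator $\fc_\la(\ft)$ from lying in that kernel. Put differently, you have shown $\fc_\la(\ft)\,\CA_k\,\fc_\mu(\fs)\neq 0$ (i.e.\ there exists $a\in\CA_k$ with $\fc_\la(\ft)\,a\,\fc_\mu(\fs)\neq 0$), while the lemma demands $a=1$. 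The ``primitive versus central idempotent'' issue you anticipate concerns whether the operator is nonzero---which you resolve correctly---not whether this particular vector survives. Closing this last gap is precisely the substance of Raicu's theorem; his argument is combinatorial (an induction over the skew shape $\la/\mu$ exploiting the containments $R(\fs)\subseteq R(\ft)$ and $C(\fs)\subseteq C(\ft)$ to exhibit an explicit nonzero coefficient), much closer to the alternative route you sketch in your final sentence than to the branching approach.
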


\begin{lem}\label{lemfcexp}
	Let $r\geq r_c$, $\la\vdash 2r$ and $\la\supseteq\la_c$. Assume that $\fs$ is the standard tableau of shape $\la_c$ lying in the top left corner of $\ft^{\la}$. Then
	$\fc_{\la}=\fc_{\la_c}(\fs)\alpha(\ft^{\la};\fs),$
	where $\alpha(\ft^{\la};\fs)\in \CA_{2r}$, and $\fc_{\la}$ and $\fc_{\la_c}(\fs)$ are  Young symmetrisers associated with $\ft^{\la}$ and $\fs$ respectively.
\end{lem}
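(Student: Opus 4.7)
\medskip

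The plan is to construct $\alpha$ by exploiting the top-left positioning of $\fs$ inside $\ft^\la$ together with Lemma~\ref{lemProdYSym}. The first step is to observe that, because $\fs$ occupies the top-left corner, the $i$-th row $R_i(\fs)$ of $\fs$ is contained in the $i$-th row $R_i$ of $\ft^\la$ for $1\le i\le m+1$, and similarly every column of $\fs$ is contained in the corresponding column of $\ft^\la$. Consequently $R(\fs)$ is a subgroup of $R(\ft^{\la})$ and $C(\fs)$ is a subgroup of $C(\ft^{\la})$.

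Using right coset decompositions $R(\ft^\la)=\coprod R(\fs)\tau_i$ and $C(\ft^\la)=\coprod C(\fs)\tau'_j$, I would then deduce factorisations
\[
x_\la \;=\; x_{\la_c}(\fs)\cdot B, \qquad y_\la \;=\; y_{\la_c}(\fs)\cdot B',
\]
where $B=\sum_i\tau_i$ and $B'=\sum_j\epsilon(\tau'_j)\tau'_j$ lie in $\CA_{2r}$. Multiplying yields the preliminary identity $\fc_\la = x_{\la_c}(\fs)\,B\,y_{\la_c}(\fs)\,B'$. The task is therefore reduced to showing that the middle factor $x_{\la_c}(\fs)\,B\,y_{\la_c}(\fs)$ lies in the right ideal $\fc_{\la_c}(\fs)\CA_{2r}$, after which the desired $\alpha$ can be assembled from $B$, $B'$ and the resulting coefficient.

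The next step would be to split $B$ into two parts according to whether a coset representative $\tau_i$ acts only on entries outside $\fs$ (the ``clean'' part) or mixes entries of $\fs$ with entries of $\ft^\la\setminus\fs$ (the ``mixing'' part). Clean representatives have support disjoint from $\fs$, so they commute with $y_{\la_c}(\fs)$, and their contribution passes through $y_{\la_c}(\fs)$ immediately, producing a genuine left factor $\fc_{\la_c}(\fs)$. The mixing representatives are the main obstacle: they arise precisely when some row $R_i$ of $\ft^\la$ is strictly longer than $R_i(\fs)$, or some column $C_j$ of $\ft^\la$ is strictly longer than $C_j(\fs)$, which is exactly where the rectangular shape of $\la_c=((2n+2)^{m+1})$ and Lemma~\ref{lemProdYSym} enter.

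For each mixing term, I would apply Lemma~\ref{lemProdYSym} with the tableau pair $(\ft^\la,\fs)$ (whose hypotheses are satisfied since $\la\supseteq \la_c$ and $\fs$ is a subtableau of $\ft^\la$) to express $x_{\la_c}(\fs)\tau_i y_{\la_c}(\fs)$ modulo the left multiplier $x_{\la_c}(\fs)$ as an element of $x_{\la_c}(\fs)y_{\la_c}(\fs)\CA_{2r}=\fc_{\la_c}(\fs)\CA_{2r}$; the nonvanishing clause of Lemma~\ref{lemProdYSym} guarantees that the resulting straightened form is genuinely of this form rather than zero. Collecting all contributions, we obtain an explicit $\alpha=\alpha(\ft^\la;\fs)\in\CA_{2r}$, depending only on the choices of coset representatives and of the straightening from Lemma~\ref{lemProdYSym}, such that $\fc_\la=\fc_{\la_c}(\fs)\alpha$.

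The hardest step will be the treatment of the mixing cosets: while the clean part is a routine disjoint-support commutation, controlling the mixing part requires that every permutation which exchanges an $\fs$-entry with a non-$\fs$-entry can be absorbed on the right of $\fc_{\la_c}(\fs)$. The rectangular shape of $\la_c$ ensures the row/column alignment needed for Lemma~\ref{lemProdYSym} to apply uniformly to each such exchange, and the top-left positioning of $\fs$ is exactly what makes the row and column subgroup containments $R(\fs)\le R(\ft^\la)$, $C(\fs)\le C(\ft^\la)$ hold with the compatible coset structure used above.
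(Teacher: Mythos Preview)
Your decomposition $\fc_\la = x_{\la_c}(\fs)\,B\, y_{\la_c}(\fs)\,B'$ via the coset factorisations $x_\la = x_{\la_c}(\fs)B$ and $y_\la = y_{\la_c}(\fs)B'$ is correct, and the clean terms behave exactly as you say. The gap is in the mixing step: Lemma~\ref{lemProdYSym} does not give what you claim. Raicu's result concerns the product $\fc_\la(\ft)\fc_{\la_c}(\fs)$ of two \emph{full} Young symmetrisers and rewrites it as $\fc_\la(\ft)\gamma$ for some $\gamma\in\CA_{2r}$; it says nothing about an individual sandwich $x_{\la_c}(\fs)\,\tau_i\, y_{\la_c}(\fs)$, and in particular does not place such an element in $\fc_{\la_c}(\fs)\CA_{2r}$. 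For a general mixing representative $\tau_i$ there is no reason to expect this containment term by term; establishing it would require a separate straightening argument (iterated Garnir relations adapted to the pair $\fs\subset\ft^\la$, or similar) that your sketch does not provide. Note also that mixing representatives are unavoidable: whenever $\la_i>2n+2$ for some $i\le m+1$, the coset space $R(\ft^\la)/R(\fs)$ is strictly larger than $\Sym(R_i\setminus R_i(\fs))$, so no choice of representatives can make them all clean.

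The paper sidesteps this entirely with a short semisimplicity argument. One expands $\fc_{\la_c}(\fs)\CA_{2r} = 1_{\CA_{2r}}\cdot\fc_{\la_c}(\fs)\CA_{2r}$ via the resolution of the identity~\eqref{eqidenres}, observes that each nonzero piece $\fc_\nu(\ft)\fc_{\la_c}(\fs)\CA_{2r}$ equals the simple right module $\fc_\nu(\ft)\CA_{2r}$, and then uses Lemma~\ref{lemProdYSym} only for its nonvanishing clause, namely $\fc_\la\fc_{\la_c}(\fs)\neq 0$. This exhibits $\fc_\la\CA_{2r}$ as a direct summand of $\fc_{\la_c}(\fs)\CA_{2r}$, hence $\fc_\la\in\fc_{\la_c}(\fs)\CA_{2r}$. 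No term-by-term control of coset representatives is needed; Lemma~\ref{lemProdYSym} enters solely to certify that a particular summand is present.
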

\begin{proof}	
	Using the resolution of identity \eqref{eqidenres}, we obtain
	\begin{equation*}
	\begin{aligned}
	\fc_{\la_{c}}(\fs)\CA_{2r}&=\biggl(\sum_{\la\vdash 2r}\sum_{\ft\in \Tab(\la)} h_{\la}^{-2}  \fc_{\la}(\ft)\biggl)\fc_{\la_{c}}(\fs)\CA_{2r}
	=\bigoplus_{(\la,\ft)\in \La}h_{\la}^{-2} \fc_{\la}(\ft)\fc_{\la_c}(\fs)\CA_{2r},
	\end{aligned}
	\end{equation*}
	where $\La=\{(\la,\ft)\;|\; \fc_{\la}(\ft)\fc_{\la_{c}}(\fs)\neq 0,\; \la\vdash 2r, \ft\in \Tab(\la)\}$. As $\fc_{\la}(\ft)$ is a projection operator mapping $\CA_{2r}$ onto a simple module isomorphic to the right Specht module $\fc_{\la}(\ft)\CA_{2r}$, we have $\fc_{\la}(\ft)\fc_{\la_c}(\fs)\CA_{2r}\cong \fc_{\la}(\ft)\CA_{2r}$ if $\fc_{\la}(\ft)\fc_{\la_{c}}(\fs)\neq 0$.  This reduces the above equation to
	\begin{equation}\label{equfcsum}	
	\fc_{\la_{c}}(\fs)\CA_{2r}=\bigoplus_{(\la,\ft)\in \La}h_{\la}^{-2} \fc_{\la}(\ft)\CA_{2r}.
	\end{equation}
	Using assumptions on $\la$ and $\fs$, we deduce from  \lemref{lemProdYSym} that $\fc_{\la}\fc_{\la_c}(\fs)\neq 0$, and hence by \eqref{equfcsum}
	$\fc_{\la}\CA_{2r}$ is a direct summand of the right hand side of \eqref{equfcsum}. In particular, $\fc_{\la}$ has an expression of the required form.
\end{proof}

Now we arrive at the following proposition.
\begin{prop}\label{propKerstr}
	Let $r\geq r_c$.  $\Ker F_r^r$ is singly  generated  by $\hat{\Phi}\ot I_{r-r_c}$ as left $\CA_{2r}$-module, i.e.,
	\begin{equation}\label{eqKerstr}
	\Ker F_r^r=\bU_r(\hat{A}^r\circ \hat{\fc}_{\la_c}\CA_{2r})=\CA_{2r}\ast(\hat{\Phi}\ot I_{r-r_c}).
	\end{equation}
\end{prop}
\begin{proof}
	Since $\bU_r(\hat{A}^r\circ \hat{\fc}_{\la_c})= \hat{\Phi}\ot I_{r-r_c}$ and $\bU_r$ is an $\CA_{2r}$-isomorphism, it is enough to prove the first equation of \eqref{eqKerstr}. Clearly,  $\bU_r(\hat{A}^r\circ \hat{\fc}_{\la_c}\CA_{2r})\subseteq \Ker F_r^r$ by \lemref{lemKer}, so we only need to prove the converse.  
	
	By \lemref{lemKer}, we need to show that $\bU_r(\hat{A}^{ r}\circ\hat{\fc}_{\la})$  as left $\CA_{2r}$-module is generated by $\bU_r(\hat{A}^r\circ \hat{\fc}_{\la_c})$ for any $\tcp\ni \la \supseteq \la_c$.
	Let $\fs$ be the subtableau of shape $\la_c$ in the top-left corner of $\ft^{\la}$. It follows from \lemref{lemfcexp} that
	\begin{equation}\label{eqnbuac}
	\bU_r(\hat{A}^{ r}\circ\hat{\fc}_{\la})=\bU_r(\hat{A}^{ r}\circ \hat{\fc}_{\la_c}(\fs)\alpha(\ft^{\la};\fs))=\alpha(\ft^{\la};\fs)^{\sharp}\ast \bU_r(\hat{A}^{ r}\circ \hat{\fc}_{\la_c}(\fs)).
	\end{equation}
	On the other hand, it is clear from the diagram that there exists $h\in H_r$ (see \exref{examexofh} below) such that
	$
	\bU_r(\hat{A}^{ r}\circ \hat{\fc}_{\la_c}(\fs))=h\ast \bU_r(\hat{A}^{ r}\circ \hat{\fc}_{\la_c}),$
	Using this in \eqnref{eqnbuac},  we complete the proof.
\end{proof}

\begin{exam}\label{examexofh}
	{\rm
		In the case of $\OSp(1|2)$, let $r=5$ and $\la=(6,4)\supset\la_c=(4,4)$.	Then
		$$
		\begin{aligned}
		\ft^{\la}=\;&
		\begin{ytableau}
		1&2&3&4&5&6\\
		7&8&9&10	
		\end{ytableau}\;,
		\end{aligned}
		\quad
		\begin{aligned}
		\ft^{\la_c}=\;&
		\begin{ytableau}
		1&2&3&4\\
		5&6&7&8	
		\end{ytableau}\;,
		\end{aligned}
		\quad
		\begin{aligned}
		\fs=\;&
		\begin{ytableau}
		1&2&3&4\\
		7&8&9&10	
		\end{ytableau}\;.
		\end{aligned}		
		$$   
		Now $\fc_{\la_c}$ and $\fc_{\la_c}(\fs)$ are Young symmetrisers associated with $\ft^{\la_c}$ and $\fs$, respectively. One can easily verify that 
		$$
		\bU_r(\hat{A}^{r}\circ \hat{\fc}_{\la_c}(\fs))=(5,7,9)\circ\bU_r(\hat{A}^{ r}\circ \hat{\fc}_{\la_c})\circ (6,10,8)=h\ast\bU_r(\hat{A}^{ r}\circ \hat{\fc}_{\la_c}),
		$$
		where $h=(5,7,9)(6,8,10)\in H_r$.
	}
\end{exam}

\subsection{Generators of $\Ker F_r^r$ for arbitrary $r$}\label{Secgen}
We shall show that  the canonical embedding of $\Ker F_{r_c}^{r_c}$ in $B_r(m-2n)$ generates $\Ker F_r^r$. For our purpose, we need the following new diagrams.

\begin{defn}\label{defnDij1}
	Assume that $r\geq r_c$. Let $D\in B_{r_c}^{r_c}(m-2n)$ be any Brauer $(r_c,r_c)$-diagram. Given  sequences $\bi,\bj$  with $\ell(\bi)=k$ and $\ell(\bj)=l$, we define a new Brauer $(r,r)$-diagram by
	\[ D(\bi,\bj):=
	\begin{cases}
	D^{\bi}_{\bj}\ot A_0^{\ot (l-k)}\ot I_{r-(r_c-k+l)},& \text{if $k\leq l$},\\
	D^{\bi}_{\bj}\ot U_0^{\ot (k-l)}\ot I_{r-(r_c+k-l)},& \text{if $k\geq l$},
	\end{cases} 
	\]
	which can be respectively represented as in \figref{FigPhi1} and \figref{FigPhi2}. Here $A_0$ and $U_0$ are elementary Brauer diagrams introduced in  \secref{SecBracat}. 
\end{defn}

\begin{figure}[h]
	\begin{center}
		\begin{picture}(250, 70)(-5,-30)
		
		\put(2,20){\line(0,1){25}}
		\put(78,20){\line(0,1){25}}
		\qbezier(25,20)(95,80)(120,-25)
		\qbezier(60,20)(90,60)(100,-25)
		\put(20,23){\tiny $i_1$}
		\put(55,23){\tiny $i_k$}
		\put(38,23){\dots}
		\put(102,-22){\dots}
		\put(108,-28){{\tiny $k$}}

		\put(0, 0){\line(1, 0){80}}
		\put(0, 0){\line(0, 1){20}}
		\put(80, 0){\line(0, 1){20}}
		\put(0, 20){\line(1, 0){80}}
		\put(35, 6){$D$}
		
		\put(2,0){\line(0,-1){25}}
		\put(78,0){\line(0,-1){25}}
		\qbezier(25,0)(95,-60)(180,45)
		\qbezier(60,0)(90,-50)(120,45)
		\put(20,-7){\tiny $j_1$}
		\put(55,-7){\tiny $j_l$}
		\put(38,-5){\dots}
		\put(137,30){\dots}
		\put(143,38){{\tiny $l$}}
		\qbezier(125,-25)(135,10)(145,-25)
		\qbezier(160,-25)(170,10)(180,-25)
		\put(146,-15){\dots}
		\put(143,-8){{\tiny $l-k$}}
		
		\put(182,6){$\ot$}
		\put(195,6){$I_{r-(r_c-k+l)}$}
		
		\end{picture}
	\end{center}
	\caption{$D(\bi,\bj),k\leq l$}
	\label{FigPhi1}
\end{figure}
\begin{figure}[h]
	\begin{center}
		\begin{picture}(250, 70)(-5,-30)
		
		\put(2,20){\line(0,1){25}}
		\put(78,20){\line(0,1){25}}
		\qbezier(25,20)(95,70)(180,-25)
		\qbezier(60,20)(90,60)(120,-25)
		\put(20,23){\tiny $i_1$}
		\put(55,23){\tiny $i_k$}
		\put(38,23){\dots}
		\put(102,36){\dots}
		\put(108,42){{\tiny $l$}}
		\qbezier(125,45)(135,10)(145,45)
		\qbezier(160,45)(170,10)(180,45)
		\put(146,35){\dots}
		\put(143,22){{\tiny $k-l$}}
		
		\put(0, 0){\line(1, 0){80}}
		\put(0, 0){\line(0, 1){20}}
		\put(80, 0){\line(0, 1){20}}
		\put(0, 20){\line(1, 0){80}}
		\put(35, 6){$D$}
		
		\put(2,0){\line(0,-1){25}}
		\put(78,0){\line(0,-1){25}}
		\qbezier(25,0)(95,-60)(120,45)
		\qbezier(60,0)(90,-40)(100,45)
		\put(20,-7){\tiny $j_1$}
		\put(55,-7){\tiny $j_l$}
		\put(38,-5){\dots}
		\put(137,-18){\dots}
		\put(143,-26){{\tiny $k$}}
		
		\put(182,6){$\ot$}
		\put(195,6){$I_{r-(r_c+k-l)}$}
		
		\end{picture}
	\end{center}
	\caption{$D(\bi,\bj),k\ge l$}
	\label{FigPhi2}
\end{figure}

In particular, if $\bi$ and $\bj$ are of the same length, we have by definition $D(\bi,\bj)=D^{\bi}_{\bj}\ot I_{r-r_c}$, where  $D^{\bi}_{\bj}$ is a Brauer  $(r_c,r_c)$-diagram. The following lemma tells us that for any two sequences $\bi$ and $\bj$ the diagram $D(\bi,\bj)$ is actually inside the 2-sided ideal of $B_r(m-2n)$ generated by $D(\bi^{\prime},\bj^{\prime})$ for some $\bi^{\prime}$ and $\bj^{\prime}$ of equal length.

\begin{lem} \label{LemPhi}
	Let $\bi$ and $\bj$ be any two sequence as above with $\ell(\bi)=k$ and $\ell(\bj)=l$.
	\begin{enumerate}[(1)]
		\item If $k\leq l$, then $D(\bi,\bj) \in  \langle D(\bi,\bj^{\prime}) \rangle_r $, where $\bj^{\prime}=(j_{l-k+1},j_{l-k+2},\dots, j_l)$ with $\ell(\bi)=\ell(\bj^{\prime})=k$.
		\item If $k \ge l$, then $D(\bi,\bj)\in \langle D(\bi^{\prime},\bj) \rangle_r$, where $\bi^{\prime}=(i_{k-l+1},i_{k-l+2},\dots, i_k)$ with $\ell(\bi^{\prime})=\ell(\bj)=l$.
	\end{enumerate}	
\end{lem}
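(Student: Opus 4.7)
The strategy is to realise $D(\bi,\bj)$ as a composition $X\circ D(\bi,\bj^{\prime})\circ Y$ in case (1) (or the analogous composition involving $D(\bi^{\prime},\bj)$ in case (2)) for explicit $X,Y\in B_r^r$, which immediately places $D(\bi,\bj)$ in the 2-sided ideal generated by the reduced diagram. The key structural observation is that both $D(\bi,\bj)$ and $D(\bi,\bj^{\prime})$ contain the same embedded Brauer diagram $D$ in the middle and differ only in their outer routing: writing $s:=l-k\geq 0$ for case (1), the $s$ bottom vertices of $D$ at positions $j_1,\ldots,j_s$ are rerouted up to the overall top in $D(\bi,\bj)$ (contributing $s$ new arcs emerging at the top and $s$ new $A_0$ caps at the bottom), whereas in $D(\bi,\bj^{\prime})$ those same vertices connect straight down as through-strands, while identity lines occupy the positions that the $A_0$'s occupy in $D(\bi,\bj)$.

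To produce the desired factorisation in case (1), I would take $X=I_r$ and construct $Y\in B_r^r$ as follows: on the top of $Y$, for each $t=1,\ldots,s$, place a $U_0$-shaped arc pairing the interface position corresponding to $D$-bottom-$j_t$ (on $D(\bi,\bj^{\prime})$'s bottom) with the interface position $r_c+t$ (which sits on an identity line of $D(\bi,\bj^{\prime})$); on the bottom of $Y$, place $s$ new $A_0$-shaped cap arcs at the positions where $D(\bi,\bj)$ carries its extra $A_0$'s; and internally install a permutation of the remaining through-strands aligning their endpoints properly. Composing $D(\bi,\bj^{\prime})\circ Y$, each of these top cups in $Y$ forces the arc issuing from $D$-bottom-$j_t$ to loop through the identity line at position $r_c+t$ of $D(\bi,\bj^{\prime})$ and emerge at top vertex $r_c+t$ of the composition, which is exactly the additional rerouted arc required by $D(\bi,\bj)$. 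Case (2) is symmetric: with $s:=k-l$, one takes $Y=I_r$ and builds $X\in B_r^r$ carrying analogous $A_0$-shaped arcs on its bottom to reroute the $s$ excess top endpoints of $D$ at $i_1,\ldots,i_s$ down to the overall bottom.

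The main obstacle I anticipate is the careful tracking of the many position indices. The rerouted arcs of $\bi$ and $\bj$ come out in reversed order on the right (as read off from the figure defining $D_{\bj}^{\bi}$), and the width of the central block of $D(\bi,\bj)$ exceeds that of $D(\bi,\bj^{\prime})$ by $s$ positions, which shifts the identity block on the right. The internal permutation of $Y$ must encode this shift precisely. Once the indexing is set up, verification of the identity $D(\bi,\bj^{\prime})\circ Y=D(\bi,\bj)$ reduces to an arc-by-arc trace using the zigzag identity $(A_0\otimes I)\circ(I\otimes U_0)=I$ of the Brauer category.
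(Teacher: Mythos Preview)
Your approach is correct and essentially identical to the paper's: for case (1) the paper also factors $D(\bi,\bj)=D(\bi,\bj^{\prime})\circ D'$ with $D'\in B_r^r$ built from $l-k$ top cups (pairing the positions $j_1,\dots,j_{l-k}$ with positions $r_c+1,\dots,r_c+l-k$), $l-k$ bottom caps, and identity strands elsewhere, and case (2) is symmetric. Your worry about indexing is well founded but does not affect the argument, since any discrepancy in the pairing order or the alignment of identity blocks is absorbed by pre/post-composing with permutations in $B_r^r$, which keeps the result inside the two-sided ideal.
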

\begin{proof}
	We only prove part (1), since part (2) can be proved similarly. It suffices to show that there exists a Brauer $(r,r)$-diagram $D^{\prime}$ such that $D(\bi,\bj)=D(\bi,\bj^{\prime})\circ D^{\prime}$, where $D(\bi,\bj^{\prime})=D^{\bi}_{\bj^{\prime}}\ot I_{r-r_c}$ with $D^{\bi}_{\bj^{\prime}}$ taken as in the lemma. Pictorially, we can separate $D^{\prime}$ from $D(\bi,\bj)$, which consists of $(l-k)$ top (bottom)  horizontal edges as shown below:
	\begin{center}
		\begin{picture}(210, 70)(0,0)
		\put(2,0){\line(0,1){60}}
		\qbezier(20,60)(100,0)(185,60)
		\qbezier(60,60)(100,20)(140,60)
		\put(20,65){\tiny $j_1$}
		\put(55,65){\tiny $j_{l-k}$}
		\put(130,65){\tiny $r_c+1$}
		\put(170,65){\tiny $r_c+l-k$}
		\put(34,58){{\tiny $l-k$}}				
		\put(38,55){\dots}
		\put(20,20){\dots}
		\put(60,20){\dots}
		\put(145,55){\dots}
		\qbezier(90,60)(60,30)(30,0)
		\qbezier(120,60)(95,30)(70,0)
		
		\qbezier(100,0)(115,40)(130,0)
		\qbezier(155,0)(170,40)(185,0)
		\put(135,20){{\tiny $l-k$}}
		\put(135,12){\dots}		
		\put(190,25){$\ot I_{r-(r_c-k+l)}$}
		\put(260,10){,}
		\end{picture}
	\end{center}
	where  all vertical edges are strands with no crossings. This completes our proof.
\end{proof}

$D(\bi,\bj)$ can also be described by using group action. For simplicity, we identify $\Sym_{2r}$ as the symmetric group on the labelling set $\{1,2\dots,r,\bar{1},\dots,\bar{r}\}$ of $(r,r)$-Brauer diagram, and $H_r\subset \Sym_{2r}$ is the parabolic subgroup defined as in \eqref{eq:H_r}.

\begin{lem}\label{lemDijorbit}
	For any two sequences $\bi$ and $\bj$ with $\ell(\bi)=k\leq l=\ell(\bj)$, we have $$D(\bi,\bj)\in [\tau\ast (D\ot I_{r-r_c})]_{H_{r_c}},$$  where $\tau=\prod_{s=1}^{k}(i_s,j_s) \prod_{t=1}^{l-k}(r_c+t,j_{k+t})\in \Sym_{2r}$.
\end{lem}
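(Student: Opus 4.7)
The plan is to verify the identity directly by computing edge sets, using the pictorial description of the $\Sym_{2r}$-action from Section~\ref{sect:lablelling}: for $\sigma\in \Sym_{2r}$, the diagram $\sigma\ast E$ has edge set $\{\{\sigma(a),\sigma(b)\}:\{a,b\}\in E\}$, redrawn so that unbarred labels occupy the top row and barred labels occupy the bottom row in the standard left-to-right order. Since the transpositions $(i_s,j_s)$ and $(r_c+t,j_{k+t})$ making up $\tau$ have pairwise disjoint supports, they commute and their effects can be read off one edge at a time.

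First I would analyse $\tau\ast(D\otimes I_{r-r_c})$. Every edge $\{a,b\}$ of $D$, whose endpoints lie in $\{1,\dots,r_c\}\cup\{\overline{1},\dots,\overline{r_c}\}$, becomes $\{\tau(a),\tau(b)\}$: any endpoint of the form $i_s$ is replaced by the barred label $j_s$, any endpoint of the form $j_s$ by the unbarred label $i_s$, and all other endpoints are fixed. Thus each edge that touched $i_s$ acquires a bottom endpoint and each edge that touched $j_s$ acquires a top endpoint. The $l-k$ identity strands $\{r_c+t,\overline{r_c+t}\}$ for $1\le t\le l-k$ become $\{j_{k+t},\overline{r_c+t}\}$; both labels are barred, so these are $l-k$ bottom-bottom caps. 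The remaining $r-r_c-(l-k)$ identity strands are untouched.

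Next I would read off the edges of $D(\bi,\bj)=D^{\bi}_{\bj}\otimes A_0^{\otimes(l-k)}\otimes I_{r-r_c+k-l}$ from Figure~\ref{FigPhi1} via the bending description of $D^{\bi}_{\bj}$ in Definition~\ref{defnDij}. Every edge of $D$ is preserved, but each $i_s$-endpoint is moved to a bottom position (bent down) and each $j_s$-endpoint to a top position (bent up), while all remaining endpoints keep their side; the factor $A_0^{\otimes(l-k)}$ adds $l-k$ bottom caps, and the identity tensor factor contributes $r-r_c-l+k$ vertical strands.

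Comparing the two descriptions, every edge has the same side-pattern (top-top, top-bottom, or bottom-bottom), the numbers of extra caps and identity strands agree, and on each side the multiset of labels appearing as endpoints coincides. Consequently the two diagrams differ only by a bijection of labels that preserves the side of each label, which is precisely an element $h\in H_r=\Sym\{1,\dots,r\}\times\Sym\{\overline{1},\dots,\overline{r}\}$; so $D(\bi,\bj)=h\ast\bigl(\tau\ast(D\otimes I_{r-r_c})\bigr)$, as required. The only real obstacle is the combinatorial bookkeeping of matching the left-to-right positions of the bent-up $j_s$'s, the bent-down $i_s$'s, and the newly created caps with the corresponding labels in $\tau\ast(D\otimes I_{r-r_c})$, but this reordering is exactly the freedom that $h$ absorbs.
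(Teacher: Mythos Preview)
Your approach is essentially the paper's: both argue by direct pictorial comparison, and the paper simply packages the computation by introducing the equal-length sequences $\bi'=(i_1,\dots,i_k,r_c+1,\dots,r_c+l-k)$ and $\bj'=\bj$ so that $\tau=(\bi',\bj')$, then observes that $(D\otimes I_{r-r_c})^{\bi'}_{\bj'}$ and $D(\bi,\bj)$ lie in the same $H_r$-orbit.

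One small slip in your write-up: when you describe the effect of $\tau$ on the edges of $D$, you say ``any endpoint of the form $j_s$ is replaced by the unbarred label $i_s$'', but this holds only for $s\le k$; the endpoints $j_{k+1},\dots,j_l$ (which do occur among the edges of $D$, since $j_1,\dots,j_l\in\{\overline{1},\dots,\overline{r_c}\}$) are sent by $\tau$ to the unbarred labels $r_c+1,\dots,r_c+l-k$. Once this is corrected, all $l$ of the $j_s$-endpoints move to the top row in both diagrams, and your side-pattern and multiset comparison goes through exactly as stated.
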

\begin{proof}
	Let $\bi^{\prime}=(i_1,i_2,\dots,i_k,r_c+1,r_c+2,r_c+l-k)$ and $\bj^{\prime}=(j_1,j_2,\dots,j_l)$, then we have $\tau=(\bi^{\prime},\bj^{\prime})$. Pictorially, it is easily verified that
	$D(\bi^{\prime},\bj^{\prime})\in  [(D\ot I_{r-r_c})^{\bi^{\prime}}_{\bj^{\prime}}]_{H_r}$, and hence the lemma follows since $(D\ot I_{r-r_c})^{\bi^{\prime}}_{\bj^{\prime}}\in [\tau\ast (D\ot I_{r-r_c})]_{H_r}$.
\end{proof}
\begin{rmk}
	Analogous arguments go through for $D(\bi,\bj)$ with $\ell(\bi)=k\geq l=\ell(\bj)$. In this case, we have $D(\bi,\bj)\in [\tau^{\prime}\ast (D\ot I_{r-r_c})]_{H_r}$, where $\tau^{\prime}=\prod_{s=1}^{l}(i_s,j_s) \prod_{t=1}^{k-l}(\overline{r_c+t},i_{l+t})\in \Sym_{2r}$
	The element $\tau$ in \lemref{lemDijorbit} is not unique. Instead, we can replace $r_c+1, r_c+2,\dots, r_c+l-k$ by any $l-k$ increasing integers ranging from $r_c+1$ to $r$.
\end{rmk}

Notice that  \defref{defnDij1} can be extended linearly, i.e., if $\Psi=\sum a_D D$ is a finite sum  with $a_D\in \C$, then  $\Psi(\bi,\bj):=\sum a_D D(\bi,\bj)$. In particular, this applies to the element $\widetilde{\Phi} \in \Ker F_{r_c}^{r_c}$. In this case, both \lemref{LemPhi} and \lemref{lemDijorbit}  remain valid.

\begin{lem} \label{LemPhiIJ}
	Let $r\geq r_c$.  The element $\widetilde{\Phi} (\bi,\bj)$ belongs to  $\Ker F_r^r$ for any two sequences $\bi$ and $\bj$. 
\end{lem}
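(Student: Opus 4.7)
My plan is to reduce the claim to the fact that $\widetilde{\Phi}\ot I_{r-r_c}$ lies in $\Ker F_r^r$, and then use \lemref{lemDijorbit} to transfer this membership to every element of the form $\widetilde{\Phi}(\bi,\bj)$ via the $\CA_{2r}$-action. For the base case, $\widetilde{\Phi}=w\ast\hat{\Phi}$ from \eqref{EqnwPhidef} lies in $\Ker F_{r_c}^{r_c}$, since $\hat{\Phi}=\bU_{r_c}(\hat{A}^{r_c}\circ \hat{\fc}_{\la_c})$ is in $\Ker F_{r_c}^{r_c}$ by \lemref{LemyX} combined with \corref{CorIsoKer}, and $\Ker F_{r_c}^{r_c}$ is an $\CA_{2r_c}$-submodule of $B_{r_c}^{r_c}(m-2n)$ by the discussion following \lemref{lem:hom-iso-1}. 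Since $F$ is a tensor functor,
\[
F_r^r(\widetilde{\Phi}\ot I_{r-r_c}) \;=\; F_{r_c}^{r_c}(\widetilde{\Phi}) \ot \id_V^{\ot (r-r_c)} \;=\; 0,
\]
so $\widetilde{\Phi}\ot I_{r-r_c} \in \Ker F_r^r$.

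Next I apply \lemref{lemDijorbit} (and its evident analogue for $\ell(\bi) \ge \ell(\bj)$) to each Brauer diagram $D$ appearing in the support of $\widetilde{\Phi}$. Although the lemma is stated diagram by diagram, its proof shows that the $H_r$-element relating $D(\bi,\bj)$ to $\tau\ast(D\ot I_{r-r_c})$ depends only on the combinatorial pattern of vertex relabelings prescribed by $\bi$ and $\bj$, and not on the internal wiring of $D$. Consequently \lemref{lemDijorbit} extends by linearity: there exists a single $\sigma\in\Sym_{2r}$, depending only on $\bi$ and $\bj$, such that
\[
\widetilde{\Phi}(\bi,\bj) \;=\; \sigma\ast \bigl(\widetilde{\Phi}\ot I_{r-r_c}\bigr).
\]
Because $\Ker F_r^r$ is stable under the $\CA_{2r}$-action, the right-hand side lies in $\Ker F_r^r$, which is the desired conclusion.

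The main technical point is the $D$-independence of the $H_r$-translate in \lemref{lemDijorbit}, which is what validates the linear extension above. This is geometrically transparent: passing from $\tau\ast(D\ot I_{r-r_c})$ to $D(\bi,\bj)$ is executed purely by permuting top and bottom vertex labels, an operation oblivious to the interior of $D$; hence the permutation works identically for every term in $\widetilde{\Phi}$.
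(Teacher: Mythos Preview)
Your argument is correct, but it takes a different route from the paper's proof. The paper first invokes \lemref{LemPhi} to reduce to sequences $\bi',\bj'$ of equal length, so that $\widetilde{\Phi}(\bi',\bj')=\widetilde{\Phi}^{\bi'}_{\bj'}\ot I_{r-r_c}$, and then appeals to \propref{PropPsiij} to place $\widetilde{\Phi}^{\bi'}_{\bj'}$ in $\Ker F_{r_c}^{r_c}$; the stability property used there is that $\Ker F_r^r$ is a two-sided ideal. Your argument instead uses \lemref{lemDijorbit} (and its linear extension, which the paper explicitly allows just before the present lemma) to write $\widetilde{\Phi}(\bi,\bj)=\sigma\ast(\widetilde{\Phi}\ot I_{r-r_c})$ for a single $\sigma\in\Sym_{2r}$, and then uses that $\Ker F_r^r$ is an $\CA_{2r}$-submodule. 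Your route is slightly more direct, bypassing the equal-length reduction of \lemref{LemPhi}; the price is that you must justify the $D$-independence of the $H_r$-translate in \lemref{lemDijorbit}, which you do correctly by observing that the passage from $D\ot I_{r-r_c}$ to $D(\bi,\bj)$ is a relabeling of the $2r$ endpoints determined solely by $\bi,\bj$. Note that the paper already asserts this linear extension of \lemref{lemDijorbit} in the paragraph preceding the present lemma, so your approach is fully supported by the surrounding text even though the paper's own proof does not exploit it.
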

\begin{proof}
	Using \lemref{LemPhi} for $\widetilde{\Phi} (\bi,\bj)$, we have $\widetilde{\Phi} (\bi,\bj)\in \langle \widetilde{\Phi} (\bi^{\prime},\bj^{\prime})\rangle_r$ for some $\bi^{\prime}$ and $\bj^{\prime}$ with $\ell(\bi^{\prime})=\ell(\bj^{\prime})\leq r_c$, so it suffices to  show that $\widetilde{\Phi} (\bi^{\prime},\bj^{\prime})\in \Ker F_r^r$. In this case we have
	$\widetilde{\Phi} (\bi^{\prime},\bj^{\prime})=\widetilde{\Phi} _{\bj^{\prime}}^{\bi^{\prime}}\ot I_{r-r_c}$. Since $\widetilde{\Phi} _{\bj^{\prime}}^{\bi^{\prime}}\in \Ker F_{r_c}^{r_c}$ by \propref{PropPsiij}, it follows that $\widetilde{\Phi} (\bi^{\prime},\bj^{\prime})$ is simply the canonical embedding  of  $\widetilde{\Phi} _{\bj^{\prime}}^{\bi^{\prime}}$ into $\Ker F_r^r$.
\end{proof}

\begin{scho}\label{schoPhiij}
	Given any two sequences  $\bi$ and $\bj$, we have $\widetilde{\Phi} (\bi,\bj)\in \langle \widetilde{\Phi} _{\bj^{\prime}}^{\bi^{\prime}}\ot I_{r-r_c}\mid   \ell(\bi^{\prime})=\ell(\bj^{\prime})\leq r_c\rangle_r$, i.e.,  $\widetilde{\Phi} (\bi,\bj)$ lies in the 2-sided ideal of $B_r(m-2n)$ generated by $\Ker F_{r_c}^{r_c}\ot I_{r-r_c}$.
\end{scho}

\lemref{LemPhiIJ} can be significantly improved in the following key lemma.

\begin{lem}\label{lemKergen}
	Let $r\geq r_c$. Then $\Ker F_r^r$ is generated by all the elements $\widetilde{\Phi} (\bi,\bj)$ as a 2-sided ideal of $B_{r}(m-2n)$. 
\end{lem}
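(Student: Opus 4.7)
The strategy is to exploit the single-element generation of $\Ker F_r^r$ as a left $\CA_{2r}$-module established in Proposition \ref{propKerstr}: since $\widetilde\Phi = w\ast\hat\Phi$ for $w\in H_r\subseteq\CA_{2r}$, we equivalently have
\begin{equation*}
\Ker F_r^r = \CA_{2r}\ast(\widetilde\Phi\ot I_{r-r_c}).
\end{equation*}
Writing $\cJ$ for the $2$-sided ideal of $B_r(m-2n)$ generated by the set $\{\widetilde\Phi(\bi,\bj)\}$, the inclusion $\cJ\subseteq\Ker F_r^r$ is immediate from Lemma \ref{LemPhiIJ}. For the reverse inclusion it suffices to show that every element of the form $\sigma\ast(\widetilde\Phi\ot I_{r-r_c})$ with $\sigma\in\Sym_{2r}$ lies in $\cJ$.

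The key step is a coset decomposition. I would write $\sigma = h\tau$ with $h\in H_r$ and $\tau$ a right coset representative of $H_r$ in $\Sym_{2r}$ chosen to be a product of pairwise-disjoint top-bottom transpositions as in Lemma \ref{lemDijorbit} (switching between the $k\leq l$ and $k\geq l$ cases as needed). Every right coset of $H_r$ is parameterised by a pair of equal-cardinality subsets $(I,J)$ with $I\subseteq\{1,\dots,r\}$ and $J\subseteq\{\bar 1,\dots,\bar r\}$, and by the flexibility afforded by the remark following Lemma \ref{lemDijorbit} (where the auxiliary indices $r_c+t$, or symmetrically $\overline{r_c+t}$, may be chosen freely in the appropriate range), a $\tau$ of the prescribed shape realises every such coset.

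Given this decomposition, Lemma \ref{lemDijorbit} furnishes some $h_0\in H_r$ with $\tau\ast(\widetilde\Phi\ot I_{r-r_c}) = h_0^{-1}\ast\widetilde\Phi(\bi,\bj)$ for the corresponding $\bi,\bj$, so
\begin{equation*}
\sigma\ast(\widetilde\Phi\ot I_{r-r_c}) = (hh_0^{-1})\ast\widetilde\Phi(\bi,\bj).
\end{equation*}
Writing $hh_0^{-1} = (\eta_1,\eta_2)\in\Sym\{1,\dots,r\}\times\Sym\{\bar 1,\dots,\bar r\}$, this becomes $\eta_1\circ\widetilde\Phi(\bi,\bj)\circ\eta_2^{-1}$ by Remark \ref{rmkoddevenact}, manifestly a two-sided product of a generator of $\cJ$ with permutation diagrams, and hence lying in $\cJ$.

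The main technical hurdle is the combinatorial verification that the $\tau$'s from Lemma \ref{lemDijorbit} and its subsequent remark together provide a complete set of right coset representatives for $H_r\backslash\Sym_{2r}$; once that is in place, the remainder of the argument is routine manipulation of the $\ast$-action.
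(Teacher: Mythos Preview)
Your proposal has a genuine gap: the claim that the $\tau$'s from Lemma~\ref{lemDijorbit} and its remark together give a complete set of right coset representatives for $H_r\backslash\Sym_{2r}$ is false. In Lemma~\ref{lemDijorbit} the sequences $\bi,\bj$ are indices of an $(r_c,r_c)$-diagram, so every bottom index in that $\tau$ satisfies $j_s\le\bar{r_c}$; in the symmetric version from the remark, every top index satisfies $i_s\le r_c$. Hence no such $\tau$ can represent a coset with $I\cap\{r_c+1,\dots,r\}\neq\emptyset$ \emph{and} $J\cap\{\overline{r_c+1},\dots,\bar r\}\neq\emptyset$ simultaneously. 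For a concrete counterexample take $r\ge r_c+2$, $I=\{r_c+1\}$, $J=\{\overline{r_c+2}\}$: the transposition $(r_c+1,\overline{r_c+2})$ is a coset representative not of the prescribed form.

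The paper handles exactly this issue by a factorisation step that your argument lacks. Given a general representative $\tau=(\bi',\bj')$ of length $p$, one lets $k,l$ be the numbers of entries of $\bi',\bj'$ lying in $\{1,\dots,r_c\}$ and $\{\bar 1,\dots,\bar{r_c}\}$ respectively. After an $H_r$-adjustment one arranges that the transpositions $(i_t,j_t)$ for $t>l$ (say $k\le l$) act purely on the identity tail, disjoint from the first $r_c+l-k$ strands. This yields
\[
\tau\ast(\widetilde\Phi\ot I_{r-r_c})
=\Bigl(\prod_{s=1}^{l}(i_s,j_s)\ast(\widetilde\Phi\ot I_{r-r_c})\Bigr)\circ\Bigl(\prod_{t=l+1}^{p}(i_t,j_t)\ast I_r\Bigr),
\]
a composition of Brauer $(r,r)$-diagrams. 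The first factor is now of the form covered by Lemma~\ref{lemDijorbit} (its bottom indices all lie in $\{\bar 1,\dots,\bar{r_c}\}$), so it lies in $[\widetilde\Phi(\bi,\bj)]_{H_r}$ for suitable $\bi,\bj$; the second factor is just an element of $B_r(m-2n)$. Thus the whole expression is in the two-sided ideal $\cJ$. This factorisation---separating the part of $\tau$ that interacts with $\widetilde\Phi$ from the part that acts on the identity tail---is the missing ingredient, and once you see it, it is what disposes of precisely the cosets your combinatorial claim fails to reach.
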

\begin{proof}
	By \propref{propKerstr}, $\Ker F_r^r$ is a singly generated  $\CA_{2r}$-module. Hence we only need to show that $\sigma\ast (\widetilde{\Phi} \ot I_{r-r_c})$ is generated by $\widetilde{\Phi} (\bi,\bj)$ for some $\bi$ and $\bj$, where $\sigma\in \Sym_{2r}$ is any permutation on the labelling set $\{1,2,\dots,r,\bar{1},\bar{2},\dots, \bar{r}\}$.
	
	This can be reduced to the following situation. Noting that $\sigma\in \Sym_{2r}$ belongs to some right coset $H_r\tau$, we take the coset representative  $\tau$ to be   $\tau=(\bi^{\prime},\bj^{\prime})$, where $\bi^{\prime}$ and $\bj^{\prime}$ are respectively sequences of equal length such that $1\leq  i_1<i_2<\dots<i_p\leq r $ and $\bar{1}\leq  j_1<j_2<\dots<j_p\leq \bar{r}$ with $p\in \N$. Therefore, we have $\sigma\ast (\widetilde{\Phi} \ot I_{r-r_c})\in [ \tau\ast (\widetilde{\Phi} \ot I_{r-r_c})]_{H_r}$ and hence it is enough to show that $\tau\ast (\widetilde{\Phi} \ot I_{r-r_c})$ is generated by $\widetilde{\Phi} (\bi,\bj)$ for some $\bi$ and $\bj$.
	
	Now let $k$ and $l$ be the largest numbers such that $i_k\leq r_c$ and $j_l\leq \overline{r_c}$. Without loss of generality, we may assume $k\leq l$. This assumption implies that there are at least $l-k$ bottom horizontal edges in the diagram of $\tau\ast (\widetilde{\Phi} \ot I_{r-r_c})$. Therefore, we can further assume $i_{k+1}=r_c+1, i_{k+2}=r_c+2,\dots, i_l=r_c+l-k$ and $j_{l+1}>\overline{r_c+l-k}$. This is depicted in the following diagram:
	\begin{center}
		\begin{picture}(210, 80)(-5,-30)
		\put(2,20){\line(0,1){20}}	
		\put(30,20){\line(0,1){20}}
		\put(40,20){\line(0,1){20}}
		\put(60,20){\line(0,1){20}}
		\put(78,20){\line(0,1){20}}
		\put(0,43){\tiny $1$}   	   
		\put(27,43){\tiny $i_1$}
		\put(37,43){\tiny $i_2$}
		\put(58,43){\tiny $i_k$}
		\put(75,43){\tiny $r_c$}
		\put(10,30){\dots}
		\put(45,30){...}
		\put(65,30){...}
		
		\put(0, 0){\line(1, 0){80}}
		\put(0, 0){\line(0, 1){20}}
		\put(80, 0){\line(0, 1){20}}
		\put(0, 20){\line(1, 0){80}}
		\put(35, 6){$\widetilde{\Phi} $}

		\put(2,0){\line(0,-1){20}}	
		\put(20,0){\line(0,-1){20}}
		\put(35,0){\line(0,-1){20}}
		\put(58,0){\line(0,-1){20}}
		\put(78,0){\line(0,-1){20}}
		\put(0,-28){\tiny $\bar{1}$}   	   
		\put(17,-28){\tiny $j_1$}
		\put(33,-28){\tiny $j_2$}
		\put(55,-28){\tiny $j_l$}
		\put(75,-28){\tiny $\overline{r_c}$}
		\put(6,-10){...}
		\put(43,-10){...}
		\put(62,-10){...}   		
		
		\put(95,40){\line(0,-1){60}}
		\put(110,40){\line(0,-1){60}}
		\put(135,40){\line(0,-1){60}}
		\put(87,43){\tiny $i_{k+1}$}
		\put(105,43){\tiny $i_{k+2}$}
		\put(130,43){\tiny $i_l$}
		\put(115,10){...}
		
		{\color{red} \qbezier[60](140,48)(140,0) (140,-30)}
		
		\put(145,40){\line(0,-1){60}}
		\put(160,40){\line(0,-1){60}}
		\put(190,40){\line(0,-1){60}}
		\put(157,-28){\tiny $j_{l+1}$}
		\put(147,10){...}
		\put(167,10){\dots}
		\put(187,43){\tiny $r$}
		\put(187,-28){\tiny $\bar{r}$}
		\put (195,-10){,}
		\end{picture}
	\end{center}
	where the subdiagram on the left of the dotted line is a Brauer $(r_c+l-k,r_c+l-k)$-diagram.
	Under our assumption, we have $$
	\begin{aligned}
	\tau\ast (\widetilde{\Phi} \ot I_{r-r_c})&=\prod_{s=1}^{l}(i_s,j_s)\prod_{t=l+1}^{p}(i_t,j_t)\ast (\widetilde{\Phi} \ot I_{r-r_c})\\
	&=\left(\prod_{s=1}^{l}(i_s,j_s)\ast (\widetilde{\Phi} \ot I_{r-r_c}) \right)\circ \left(\prod_{t=l+1}^{p}(i_t,j_t)\ast I_{r}\right). 
	\end{aligned}
	$$
	It follows that $\tau\ast (\widetilde{\Phi} \ot I_{r-r_c})$ is a composition of two Brauer $(r,r)$-diagrams. The first Brauer diagram, $\prod_{s=1}^{l}(i_s,j_s)\ast (\widetilde{\Phi} \ot I_{r-r_c})$, by \lemref{lemDijorbit} is generated by $\widetilde{\Phi}(\bi,\bj)$ with $\bi=(i_1,\dots,i_k)$ and $\bj=(j_1,\dots,j_l)$. Therefore,  $\tau\ast (\widetilde{\Phi} \ot I_{r-r_c})$ is generated by $\widetilde{\Phi}(\bi,\bj)$. 	
\end{proof}

We now state the main result of of this paper.

\begin{thm}\label{thm:SFT-refine}
	Suppose that $\sdim(V)=(m|2n)$. Let 
	$
	F_r^r: B_r(m-2n)\ra \End_{\OSp(V)}(V^{\ot r})
	$ be the surjective algebra homomorphism. If $r<r_c$, then $F_r^r$ is an isomorphism. If $r\geq r_c$, then $\Ker F_r^r$ as a $2$-sided ideal of $B_r(m-2n)$ is generated by the  set $\{ \widetilde{\Phi}_{\bj}^{\bi}\ot I_{r-r_c} \mid \{\bi,\bj\}\in \CI_{r_c}\}.$
\end{thm}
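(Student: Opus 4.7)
The plan is to assemble the theorem from three earlier results: Corollary~\ref{CorInj}, Lemma~\ref{lemKergen}, Scholium~\ref{schoPhiij}, and Theorem~\ref{thmKermini}. The case $r<r_c$ is immediate: by Theorem~\ref{thmKeriso} the kernel vanishes, so $F_r^r$ is an isomorphism (Corollary~\ref{CorInj}). Hence the content of the theorem is the description of the generators in the case $r\ge r_c$, which I now outline.

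First I would invoke Lemma~\ref{lemKergen}, which already tells us that $\Ker F_r^r$ as a $2$-sided ideal of $B_r(m-2n)$ is generated by the family $\{\widetilde{\Phi}(\bi,\bj)\}$ ranging over all pairs of sequences $\bi,\bj$ (of possibly unequal lengths). Next I would apply Scholium~\ref{schoPhiij} to each such $\widetilde{\Phi}(\bi,\bj)$: by inserting cup/cap caps at the $A_0$ and $U_0$ components of Figures~\ref{FigPhi1} and \ref{FigPhi2}, every $\widetilde{\Phi}(\bi,\bj)$ lies in the $2$-sided ideal of $B_r(m-2n)$ generated by elements of the form $\widetilde{\Phi}_{\bj'}^{\bi'}\otimes I_{r-r_c}$ with $\ell(\bi')=\ell(\bj')\le r_c$. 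Combining these two reductions gives
\[
\Ker F_r^r \;=\; \bigl\langle \widetilde{\Phi}_{\bj'}^{\bi'}\otimes I_{r-r_c} \,\bigm|\, \ell(\bi')=\ell(\bj')\le r_c\bigr\rangle_r.
\]

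The final step is to lift the generating set of $\Ker F_{r_c}^{r_c}$ provided by Theorem~\ref{thmKermini} across the canonical embedding $B_{r_c}(m-2n)\hookrightarrow B_r(m-2n)$ of Remark~\ref{rmk:embed}. Since the embedding is realised diagrammatically by $D\mapsto D\otimes I_{r-r_c}$, it is compatible with composition on both sides: if $\widetilde{\Phi}_{\bj'}^{\bi'}=\sum_i a_i\, \widetilde{\Phi}_{\bj_i}^{\bi_i}\, b_i$ in $B_{r_c}(m-2n)$ with $\{\bi_i,\bj_i\}\in\CI_{r_c}$, then
\[
\widetilde{\Phi}_{\bj'}^{\bi'}\otimes I_{r-r_c}=\sum_i (a_i\otimes I_{r-r_c})\,(\widetilde{\Phi}_{\bj_i}^{\bi_i}\otimes I_{r-r_c})\,(b_i\otimes I_{r-r_c}),
\]
which lies in the $2$-sided ideal of $B_r(m-2n)$ generated by $\{\widetilde{\Phi}_{\bj}^{\bi}\otimes I_{r-r_c}\mid \{\bi,\bj\}\in\CI_{r_c}\}$. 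Conversely every such $\widetilde{\Phi}_{\bj}^{\bi}\otimes I_{r-r_c}$ lies in $\Ker F_r^r$ by Lemma~\ref{LemPhiIJ}. This yields the claimed generating set.

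The proof is essentially a bookkeeping assembly, so I do not expect a genuine obstacle. The one point that deserves care is the reduction in Scholium~\ref{schoPhiij}: one must confirm that the ``cap/cup off'' argument really expresses $\widetilde{\Phi}(\bi,\bj)$ (with $\ell(\bi)\ne\ell(\bj)$) in the two-sided ideal generated by the equal-length diagrams tensored with the identity, rather than just in the $H_r$-orbit. Since this is precisely the content of Lemma~\ref{LemPhi} combined with the observation that $\widetilde{\Phi}(\bi',\bj')=\widetilde{\Phi}_{\bj'}^{\bi'}\otimes I_{r-r_c}$ when $\ell(\bi')=\ell(\bj')$, the step goes through without difficulty.
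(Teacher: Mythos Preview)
Your proposal is correct and follows essentially the same route as the paper's proof: invoke Lemma~\ref{lemKergen} to get generation by the $\widetilde{\Phi}(\bi,\bj)$, reduce via Scholium~\ref{schoPhiij} to $\Ker F_{r_c}^{r_c}\otimes I_{r-r_c}$, and then apply Theorem~\ref{thmKermini}. Your write-up is a little more explicit about the embedding compatibility and the reverse inclusion via Lemma~\ref{LemPhiIJ}, but these are just elaborations of the same argument.
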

\begin{proof}
	It is enough to  prove that $\Ker F_r^r$ as a $2$-sided ideal of $B_r(m-2n)$ is generated by $\Ker F_{r_c}^{r_c}\ot I_{r-r_c}$ when $r\geq r_c$. By \lemref{lemKergen}, $\Ker F_r^r$ is generated by elements of the form $\widetilde{\Phi} (\bi,\bj)$, which by \schref{schoPhiij} are generated by $\Ker F_{r_c}^{r_c}\ot I_{r-r_c}$. Now the theorem follows from \thmref{thmKermini}.
\end{proof}

\subsection{An application to the orthosymplectic Lie superalgebra}
We now turn to the category of finite dimensional representations of the
orthosymplectic Lie superalgebra $\osp(V)$ \cite{K}. 

We refer to \cite[Section 5]{LZ5} for some basic facts on $\osp(V)$-invariants in the tensor powers $V^{\ot r}$. Particularly, we want to mention that there exists a distinguished $\osp(V)$-invariant in  the tensor power $V^{m(2n+1)}$ with $m>0$ and $n\geq 0$, which is called super Pfaffian and is denoted by $\Omega$. Recall that the non-degenerate bilinear form $(-,-)$ on $V$ provides a canonical isomorphism of $\osp(V)$-modules:
$V^{\ot 2r}\rightarrow \Hom_{\C}(V^{\ot r}, V^{\ot r}).$
Therefore,  we have a distinguished element $E(\Omega)\in \End_{\osp(V)}(V^{\ot \frac{m(2n+1)}{2}})$ corresponding to the super Pfaffian $\Omega\in V^{\ot m(2n+1)}$ under the above isomorphism  if and only if $\frac{m(2n+1)}{2}\in \Z_{\geq 0}$.
This implies that there exists a distinguished element $E(\Omega)\ot \id_{V}^{\ot (r- \frac{m(2n+1)}{2})} \in \End_{\osp(V)}(V^{\ot r})$ if and only if $r-\frac{m(2n+1)}{2}\in\Z_{\ge 0}$. The distinguished element $E(\Omega)$ is not expressible in terms of Brauer diagrams, see, e.g., \cite[Theorem 5.2, Corollary 5.8]{LZ5}. 

\begin{prop}\cite[Corollary 5.9]{LZ5} \label{propEndosp}
	We have a canonical inclusion of associative algebras
	\[ \End_{\osp(V)}(V^{\ot r})\supseteq \End_{\OSp(V)}(V^{\ot r}).  \]	
	The equality holds if and only if $r-\frac{m(2n+1)}{2}\notin\Z_{\ge 0}$. If $r-\frac{m(2n+1)}{2}\in \Z_{\geq 0}$, then $\End_{\osp(V)}(V^{\ot r})$ is generated as associative algebra by $\End_{\OSp(V)}(V^{\ot r})$ together with $E(\Omega)\ot \id_{V}^{\ot (r- \frac{m(2n+1)}{2})}$.
\end{prop}

\begin{thm} \label{thm:osp-end} The Brauer algebra $B_r(m-2n)$ is isomorphic
	to the endomorphism algebra $\End_{\osp(V)}(V^{\ot r})$ if and only if
	\begin{enumerate}
		\item $m$ is an odd positive integer and $r<(m+1)(n+1)$; or
		\item $m$ is an even positive integer and $r<mn+\frac{m}{2}$.
	\end{enumerate}
\end{thm}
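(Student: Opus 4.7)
The plan is to combine two earlier results: Theorem \ref{thm:SFT-refine} (which controls when $F_r^r$ is injective onto $\End_{\OSp(V)}(V^{\ot r})$) and Proposition \ref{propEndosp} (which controls when the $\OSp(V)$- and $\osp(V)$-endomorphism algebras agree). The logical backbone is that, since $F_r^r:B_r(m-2n)\to\End_{\OSp(V)}(V^{\ot r})$ is always surjective by Corollary \ref{corofft}, the Brauer algebra is isomorphic to $\End_{\osp(V)}(V^{\ot r})$ if and only if
\[
\text{(i) } F_r^r \text{ is injective, \quad and \quad (ii) } \End_{\OSp(V)}(V^{\ot r})=\End_{\osp(V)}(V^{\ot r}).
\]

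Next I would invoke Corollary \ref{CorInj} (a direct consequence of Theorem \ref{thm:SFT-refine}) to translate (i) into the condition $r<r_c=(m+1)(n+1)$, and then invoke Proposition \ref{propEndosp} to translate (ii) into the condition $r-\tfrac{m(2n+1)}{2}\notin\Z_{\ge 0}$, i.e., either $\tfrac{m(2n+1)}{2}\notin\Z$ or else $r<\tfrac{m(2n+1)}{2}$.

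The only remaining step is a clean parity case analysis on $m$. If $m$ is an odd positive integer, then $m(2n+1)$ is odd, so $\tfrac{m(2n+1)}{2}\notin\Z$ and (ii) is automatic; hence the combined condition reduces to $r<(m+1)(n+1)$, which is case (1). If $m$ is an even positive integer, then $\tfrac{m(2n+1)}{2}=mn+\tfrac{m}{2}\in\Z_{\ge 0}$, so (ii) becomes $r<mn+\tfrac{m}{2}$; since $mn+\tfrac{m}{2}<(m+1)(n+1)=mn+m+n+1$ (because $\tfrac{m}{2}<m+n+1$), this is the tighter of the two constraints and (i) is implied, giving case (2).

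There is no real obstacle here since both ingredients are already established; the only point requiring a little care is verifying that in the even-$m$ case the inequality $mn+\tfrac{m}{2}\le (m+1)(n+1)$ always holds, so that the $\osp$-equality condition subsumes the injectivity condition and yields precisely the stated bound. I would present the argument in two short paragraphs corresponding to the two parity cases after stating the equivalence (i)$\wedge$(ii).
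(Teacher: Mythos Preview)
Your proposal is essentially the paper's proof: establish the equivalence with (i)$\wedge$(ii), translate each condition via Corollary~\ref{CorInj} and Proposition~\ref{propEndosp}, and then split on the parity of $m$ (including the observation that $mn+\tfrac{m}{2}<(m+1)(n+1)$ in the even case). The one place where you are less careful than the paper is the ``only if'' direction of the biconditional: surjectivity of $F_r^r$ alone yields only the ``if'' direction, since an abstract isomorphism $B_r(m-2n)\cong\End_{\osp(V)}(V^{\ot r})$ does not a priori force the particular map $F_r^r$ to be injective or its image to be all of $\End_{\osp(V)}(V^{\ot r})$. The paper handles this by invoking the fact (from \cite{LZ5}, stated just before Proposition~\ref{propEndosp}) that the super Pfaffian $E(\Omega)$ is not expressible in terms of Brauer diagrams; hence an isomorphism with $B_r(m-2n)$ forces the absence of $E(\Omega)\ot\id_V^{\ot(r-m(2n+1)/2)}$, which gives (ii), and then (i) follows by comparing dimensions. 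You should add this step.
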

\begin{proof}
	We claim that $B_r(m-2n)\cong \End_{\osp}(V^{\ot r})$ if and only if the following conditions hold:
	\begin{enumerate}[(i)]
		\item $\End_{\osp(V)} (V^{\ot r})=\End_{\OSp(V)}(V^{\ot r})$; and
		\item $\End_{\OSp(V)}(V^{\ot r})\cong B_r(m-2n)$.
	\end{enumerate} 
	The ``if'' part is obvious. Assume that $B_r(m-2n)\cong \End_{\osp}(V^{\ot r})$. Then by \propref{propEndosp}  $\End_{\osp}(V^{\ot r})$ does not contain  the super Pfaffian $E(\Omega)\ot \id_{V}^{\ot (r- \frac{m(2n+1)}{2})}$, and hence we have $\End_{\osp}(V^{\ot r})=\End_{\OSp(V)}(V^{\ot r})\cong B_r(m-2n)$, completing the proof of ``only if'' part.
	
	Now the condition (i) holds if and only if $r-\frac{m(2n+1)}{2}\notin\Z_{\ge 0}$ by \propref{propEndosp}, which is certainly true when $m$ is odd and is equivalent to $r<mn+\frac{m}{2}$ when $m$ is even. Using  \thmref{thmKeriso}, we have condition (ii) if and only 	if $r<r_c=(m+1)(n+1)$. Therefore, both two conditions hold if and only if $r<(m+1)(n+1)$ when $m$ is  odd  and $r<\min(mn+\frac{m}{2}, (m+1)(n+1))=mn+\frac{m}{2}$ when $m>0$ is even.
\end{proof}

In particular, \thmref{thm:osp-end} recovers the well-known classical case $n=0$ (cf. \cite[Appendix F]{FH}). We have $B_r(2k)\cong \End_{\fso(V)}(V^{\ot r})$ if and only if $r<k$ and $B_r(2k+1)\cong \End_{\fso(V)}(V^{\ot r})$ if and only if $r<2k+1$. Here $V=\C^{2k}$ or $\C^{2k+1}$. 

\thmref{thm:osp-end} also considerably strengthens a result of  Ehrig and Stroppel in \cite[Theorem A]{ES}, in which it shows that if one of the following conditions holds 
\begin{itemize}
	\item $\sdim V\neq(2k|0)$ and $r\leq k+n$;
	\item $\sdim V=(2k|0)$ with $k>0$ and $r<k$,
\end{itemize}
then we have the isomorphism of algebras $B_r(m-2n)\cong \End_{\osp(V)}(V^{\ot r})$, where $\sdim V=(m|2n)$ with $m=2k$ or $2k+1$. The upper bound given in the first condition is smaller than that given in our theorem. The second condition agrees with the ``if'' part of the classical case ($n=0$) of our theorem.

\section{SFT for $\OSp(1|2n)$}\label{SecOSp12n}
We write $\OSp(V)$ as $\OSp(1|2n)$ when $\sdim V=(1|2n)$. In this case, we shall prove that $\Ker F_r^r$ is singly generated. An explicit formula for the generator will be constructed.

We start by a technical lemma which will be used later.
\begin{lem}\label{lemyongiden}
	Let $\la=(2^m)\,(m\in \Z_{>0})$ and $C_1$ be the first column of $\ft^{\la}$,  then we have the  identity
	$\fc_{\la}(12)\alpha^-(C_1)=(m-1)!\fc_{\la},$
	where $\alpha^-(C_1)=\sum_{\sigma\in \Sym\{C_1\}} \epsilon(\sigma)\sigma$.
\end{lem}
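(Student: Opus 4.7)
Write $C_1=\{1,3,5,\dots,2m-1\}$ for the first column of $\ft^{\la}$ and set $C_1^*:=(12)\cdot C_1=\{2,3,5,\dots,2m-1\}$. The plan is to establish three auxiliary identities and chain them so as to reduce $\fc_{\la}(12)\alpha^-(C_1)$ to a scalar multiple of $\fc_{\la}$.

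First I would apply the standard conjugation identity $\sigma\,\alpha^-(S)\,\sigma^{-1}=\alpha^-(\sigma\cdot S)$ with $\sigma=(12)$, which gives
\[
\fc_{\la}(12)\alpha^-(C_1)=\fc_{\la}\,\alpha^-(C_1^*)\,(12).
\]
Next I would invoke the Garnir relation (\thmref{thmGanirrel}) with $X=C_1$ and the one-element subset $Y=\{2\}$ of the second column. Since $|X\cup Y|=m+1$ strictly exceeds the length $m$ of column~$1$, the Garnir element $G_{X,Y}=e-\sum_{i\in C_1}(i,2)$ annihilates $\fc_{\la}$, and isolating the term $i=1$ yields
\[
\fc_{\la}(12)=\fc_{\la}\Bigl(e-\sum_{i\in C_1\setminus\{1\}}(i,2)\Bigr).
\]
The third identity is the coset decomposition of $\Sym(C_1^*)$ over the subgroup $\Sym(C_1\setminus\{1\})$ (the stabiliser of $2$), with cosets indexed by the image of $2$:
\[
\alpha^-(C_1^*)=\Bigl(e-\sum_{i\in C_1\setminus\{1\}}(i,2)\Bigr)\alpha^-(C_1\setminus\{1\}).
\]

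Chaining these, I would substitute the coset decomposition into the conjugation identity and then replace the bracketed factor by the Garnir identity to obtain
\[
\fc_{\la}(12)\alpha^-(C_1)=\fc_{\la}\,\alpha^-(C_1^*)\,(12)=\fc_{\la}(12)\,\alpha^-(C_1\setminus\{1\})\,(12).
\]
Because $(12)$ and $\alpha^-(C_1\setminus\{1\})$ act on disjoint supports they commute, and $(12)^2=e$, so the right-hand side collapses to $\fc_{\la}\,\alpha^-(C_1\setminus\{1\})$. Finally, since $C_1\setminus\{1\}\subset C_1$ lies in the column stabiliser $C(\ft^{\la})$, each $\tau\in\Sym(C_1\setminus\{1\})$ satisfies $\fc_{\la}\tau=\epsilon(\tau)\fc_{\la}$; summing the $(m-1)!$ terms of $\alpha^-(C_1\setminus\{1\})$ gives $\fc_{\la}\,\alpha^-(C_1\setminus\{1\})=(m-1)!\,\fc_{\la}$, completing the proof.

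The main obstacle is pinpointing the right Garnir specialisation --- namely $Y=\{2\}$, not $Y=C_2$ --- so that the Garnir coefficient $e-\sum_{i\in C_1\setminus\{1\}}(i,2)$ matches \emph{exactly} the leading factor in the coset decomposition of $\alpha^-(C_1^*)$. Once this matching is spotted, the lone transposition $(12)$ absorbs an entire antisymmetriser factor, and the remainder of the argument is routine bookkeeping with the row-row commutation $(12)\leftrightarrow\alpha^-(C_1\setminus\{1\})$ and the column-sign action of $\fc_{\la}$.
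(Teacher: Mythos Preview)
Your proof is correct and follows essentially the same approach as the paper's: both use the Garnir relation with $X=C_1$, $Y=\{2\}$ together with a coset decomposition of an antisymmetriser to extract the $(m-1)!$ factor. The only cosmetic difference is that the paper factors $\alpha^-(C_1)$ directly (over the stabiliser of $1$ in $\Sym(C_1)$) rather than first conjugating by $(12)$ and then factoring $\alpha^-(C_1^*)$ over the stabiliser of $2$; after that, the manipulations are the same column-sign and Garnir substitutions you describe.
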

\begin{proof}
	Let $X=C_1=\{1,3,\dots,2m-1\}$ and $Y=\{2\}$, then $G_{X,Y}=(1)-\sum_{i=1}^{m}(2,2i-1)$ and  we have the Garnir relation
	$\fc_{\la}=\fc_{\la}\sum_{i=1}^{m}(2,2i-1)$.
	Using $\alpha^-(C_1)=\alpha^-(\{3,5,\dots,2m-1\})((1)- \sum_{i=2}^{m}(1,2i-1))$,
	we obtain
	$$
	\fc_{\la}(12)\alpha^-(C_1)=(m-1)!\fc_{\la}(12)((1)- \sum_{i=2}^{m}(1,2i-1))=(m-1)!\fc_{\la},
	$$
	where we have used $\fc_{\la}(12)(1,2i-1)=-\fc_{\la}(1,2i-1)(12)(1,2i-1)=-\fc_{\la}(2,2i-1)$ and the Garnir relation  in the last equation.
\end{proof}

Now we shall construct the generator of $\Ker F_{r_c}^{r_c}$ with $r_c=2(n+1)$, which by \thmref{thm:SFT-refine} will generates $\Ker F_r^r$ for $r\geq r_c$. Suppose that $\bi$ is a standard sequence of increasing type
$\ty(\bi)=(a_1,a_2,\dots,a_{n+1})$ with $\ell(\bi)=k\leq r_c$.  Then $0\leq a_p\leq 2$ for all $p$ in the present case.  We define the set of increasing types of standard sequences by
$$S(n+1):=\left\{(0^{n+1-k}1^{k})\mid 0\leq k\leq n+1\right\},$$
where for simplicity $(0^{n+1-k}1^{k})$ means that $0$ appears $n+1-k$ times and $1$ appears $k$ times. Then by \thmref{thmKermini}, $\Ker F_{r_c}^{r_c}$ is generated by elements $\widetilde{\Phi}^{\bi}_{\bj}$ with $\ty(\bi)=\ty(\bj)\in S(n+1)$.

We define the element $E:=\widetilde{\Phi}^{\bi}_{\bj}$, where $\bi$ and $\bj$ are standard sequences of increasing type $\ty(\bi)=\ty(\bj)=(1^{n+1})$. 
By \lemref{LemOrbPhi}, we have
\begin{equation}\label{OrbE}
\bU_{r_c}(\hat{A}^{ r_c}\circ \hat{\fc}_{\la_c}(w_{\mu_c}^{-1}(\bi),w_{\mu_c}^{-1}(\bj)))=\bU_{r_c}(\hat{A}^{ r_c}\circ \hat{\fc}_{\la_c} \prod_{s=1}^{n+1}(2s-1,2s)) \in [E]_{H_{r_c}},
\end{equation}
where $\la_c=(2n+2,2n+2)$. For instance, in  \exref{Examty}  $\bi=(1,5,9)$ and $\bj=(2,6,10)$ are the sequences we are taking (we need to change the labelling). In this case, we have
$w_{\mu_c}^{-1}(\bi)=(1,3,5)$ and $w_{\mu_c}^{-1}(\bj)=(2,4,6)$.
Our main theorem is as follows.

\begin{thm}\label{thmgen}
	Suppose that $G=\OSp(1|2n)$. If $r<2(n+1)$, the algebra homomorphism $F_r^r: B_r(1-2n)\rightarrow \End_{G}(V^{\ot r})$ is an isomorphism. If $r\geq 2(n+1)$, then $\Ker F_r^r$ is generated as a 2-sided ideal of $B_r(1-2n)$ by the element $E$.
\end{thm}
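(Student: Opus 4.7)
The first claim, for $r<2(n+1)$, is immediate from $\corref{CorInj}$. For the second claim, I plan to reduce to the minimal-degree case and argue by induction using $\lemref{lemyongiden}$.

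By $\thmref{thm:SFT-refine}$, the 2-sided ideal $\Ker F_r^r\subset B_r(1-2n)$ is generated by the canonical embedding of $\Ker F_{r_c}^{r_c}\otimes I_{r-r_c}$, so it suffices to prove $\Ker F_{r_c}^{r_c}=\langle E\rangle_{r_c}$ inside $B_{r_c}(1-2n)$. When $m=1$, a direct case analysis of the constraints defining $\CI_{r_c}$ (entries of $\ty(\bi), \ty(\bj)$ in $\{0,1,2\}$ and non-decreasing, $\sum a_p=\sum b_p$, and $a_{n+1}+b_{n+1}\leq 2$) shows that the only admissible pairs of types are $\ty(\bi)=\ty(\bj)=(0^{n+1-k}\,1^k)$ for some $0\leq k\leq n+1$. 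Writing $E_k$ for the corresponding generator, I have $E_{n+1}=E$, and $\thmref{thmKermini}$ reduces the task to showing $E_k\in\langle E\rangle_{r_c}$ for each $0\leq k\leq n$. A key simplification comes from $\propref{AnnThm}$: since every Brauer diagram with a horizontal edge annihilates $\Ker F_{r_c}^{r_c}$ on both sides, we have $\langle E\rangle_{r_c}=\CA_{r_c}\cdot E\cdot\CA_{r_c}$, i.e., the cyclic $H_{r_c}$-submodule of $B_{r_c}(1-2n)$ generated by $E$.

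I will then establish $E_k\in\langle E\rangle_{r_c}$ by descending induction on $k$, the base case $k=n+1$ being tautological. For the inductive step from $k+1$ to $k$, I plan to invoke $\lemref{lemyongiden}$ applied to a length-two column of $\ft^{\la_c}$ (all of its columns have length two, since $\la_c$ has exactly two rows in the case $m=1$), and use $\lemref{LemOrbPhi}$ to translate the resulting algebraic identity in $\CA_{2r_c}$ back to a relation in the Brauer algebra. The goal is to produce an identity expressing $E_k$ as an explicit $\CA_{r_c}$-bilinear combination of $H_{r_c}$-translates of $E_{k+1}$, thereby placing $E_k$ inside $\langle E\rangle_{r_c}$ and completing the induction.

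The main obstacle will be constructing this Garnir-type relation precisely and verifying that the coefficient of $E_k$ on the left-hand side of the derived identity is non-zero, so that the recursion actually solves for $E_k$ rather than collapsing to a trivial relation. This requires careful diagrammatic bookkeeping of how the two-column anti-symmetrisation provided by $\lemref{lemyongiden}$ interacts with the pulling-down operation of $\defref{defnDij}$ that constructs $\widetilde{\Phi}^{\bi}_{\bj}$ from $\widetilde{\Phi}$.
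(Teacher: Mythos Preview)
Your proposal is correct and follows essentially the same route as the paper. The paper also reduces to $r=r_c$ via \thmref{thm:SFT-refine}, identifies $\CI_{r_c}$ with the types $(0^{n+1-k}1^k)$, and uses \lemref{lemyongiden} together with \lemref{LemOrbPhi} to show each $E_k$ lies in the $H_{r_c}$-span of $E$. The only cosmetic difference is that the paper does not induct: it applies the column antisymmetrisers $\alpha^-(C_{2t-1})$ for $t=1,\dots,n+1-k$ simultaneously to the element in $[E]_{H_{r_c}}$ and lands directly on an element in $[E_k]_{H_{r_c}}$, whereas you peel off one column at a time. Your ``main obstacle'' evaporates: in the $m=2$ case of \lemref{lemyongiden} the coefficient $(m-1)!$ equals $1$, so the identity reads $\hat{\fc}_{\la_c}(2s-1,2s)\alpha^-(C_{2s-1})=\hat{\fc}_{\la_c}$ on the nose and no non-vanishing check is needed.
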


\begin{proof}
	We only need to prove the second assertion. For each $k$ such that $0\leq k\leq n+1$, let  $\bi$ and $\bj$ be standard sequences with increasing types $\ty(\bi)=\ty(\bj)=(0^{n+1-k}1^k)$. It is enough  to prove that $\widetilde{\Phi}^{\bi}_{\bj} \in \langle E\rangle_{r_c}$. 
	Let $C_{2t-1}$ be the $(2t-1)$-th column of $\ft^{\la_c}$ for all $1\leq t\leq n+1-k$. Then  we have
	\begin{equation}\label{eqnPhiorb2}
	\begin{aligned}
	&\prod_{t=1}^{n+1-k}\alpha^-(C_{2t-1})\ast\bU_{r_c}(\hat{A}^{ r_c}\circ \hat{\fc}_{\la_c}\prod_{s=1}^{n+1}(2s-1,2s))\\
	=&\bU_{r_c}(\hat{A}^{ r_c}\circ \hat{\fc}_{\la_c}\prod_{s=1}^{n+1}(2s-1,2s)\prod_{t=1}^{n+1-k}\alpha^-(C_{2t-1})).\\
	\end{aligned}	
	\end{equation}
	By \lemref{lemyongiden} ($m=2$ case), we have 
	$ \hat{\fc}_{\la_c} \prod_{s=1}^{n+1-k}(2s-1,2s) \prod_{t=1}^{n+1-k}\alpha^-(C_{2t-1})= \hat{\fc}_{\la_c}.$
	Using this in \eqref{eqnPhiorb2} and by \lemref{LemOrbPhi}, we obtain
	\[
	\begin{aligned}
	&\prod_{t=1}^{n+1-k}\alpha^-(C_{2t-1})\ast\bU_{r_c}(\hat{A}^{ r_c}\circ \hat{\fc}_{\la_c}\prod_{s=1}^{n+1}(2s-1,2s))\\
	=&   \bU_{r_c}(\hat{A}^{ r_c}\circ \hat{\fc}_{\la_c} \prod_{s=n+2-k}^{n+1}(2s-1,2s)) \in [\widetilde{\Phi}^{\bi}_\bj]_{H_{r_c}}.
	\end{aligned}
	\]
	Since $\prod_{t=1}^{n+1-k}\alpha^-(C_{2t-1})$ belongs to $\C H_{r_c}$, by \eqref{OrbE} we conclude that  $\widetilde{\Phi}^{\bi}_{\bj}\ \in \langle E\rangle_{r_c}$.
\end{proof}

\begin{rmk}
	The generator $E$ is not an idempotent or quasi idempotent, even although its is constructed from a quasi
	idempotent $\widetilde{\Phi}$ (see \lemref{propidemp}). One can verify this directly in the case of $\OSp(1|2)$. This is very different from the case of classical groups \cite{LZ1, LZ4}.
\end{rmk}

\section{Applications to the orthogonal and symplectic groups}\label{Secclassical}
As an application of results obtained in previous sections, we re-derive the main results of \cite{LZ1, LZ4} on SFTs of the orthogonal and symplectic groups over $\C$. Our treatment here will be uniform and more conceptual.

\subsection{The symplectic group}
We take $V=V_{\bar{1}}$ to be purely odd with $\dim  V_{\bar{1}}=2n$. Then $m=0$, and the  non-degenerate  bilinear form $(-,-)$ on $V$ is skew-symmetric. Hence the isometry group of this form is the symplectic group $\Sp(2n)$.
Applying \corref{corofft}, we deduce that
$$F_r^r: B_r(-2n)\longrightarrow \End_{\Sp(2n)}(V^{\ot r})$$
is a surjective algebra homomorphism. Since $r_c=n+1$, we immediately obtain the following result from \thmref{thmKeriso}.
\begin{lem}\label{lemKermini}
	$\Ker F_r^r\neq 0$ if and only if $r\geq n+1$. Furthermore, $\Ker F_{n+1}^{n+1}\cong S^{(2n+2)}$.
\end{lem}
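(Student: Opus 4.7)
The plan is to derive this lemma as a direct specialisation of \thmref{thmKeriso} to the case $m = 0$, so the work amounts to substituting the parameters and identifying the relevant set of even partitions.

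First I would record the specialisation of the critical data: since $m = 0$ and $\sdim V = (0|2n)$, the critical rank becomes $r_c = (m+1)(n+1) = n+1$, and the critical partition becomes $\la_c = ((2n+2)^{m+1}) = (2n+2)$, viewed as a partition of $2r_c = 2(n+1)$. With these values in hand, the first statement follows immediately from \thmref{thmKeriso}: we have $\Ker F_r^r \neq 0$ if and only if $r \ge r_c = n+1$.

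For the second assertion, I would apply the explicit decomposition
\[
\Ker F_r^r \cong \bigoplus_{\tcp \ni \la \supseteq \la_c} S^{\la}
\]
from \thmref{thmKeriso} at $r = n+1$. The sum ranges over even partitions of $2(n+1)$ containing the one-row partition $(2n+2)$. Since an even partition of $2(n+1)$ that contains $(2n+2)$ must itself have first row of length at least $2n+2$ and total size exactly $2(n+1)$, the only possibility is $\la = (2n+2)$ itself. Hence the direct sum collapses to a single summand, yielding $\Ker F_{n+1}^{n+1} \cong S^{(2n+2)}$.

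There is essentially no obstacle here, as the result is a clean specialisation. The only point that warrants a line of justification is the uniqueness of the even partition of $2(n+1)$ containing $(2n+2)$, which is a one-line combinatorial observation. Accordingly the proof proposal is short: quote \thmref{thmKeriso}, substitute $m = 0$, and observe that the partition indexing set has a unique element in the minimal case.
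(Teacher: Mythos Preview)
Your proposal is correct and follows exactly the paper's approach: the paper simply notes that $r_c = n+1$ in the case $m=0$ and states that the lemma follows immediately from \thmref{thmKeriso}, which is precisely what you do (with the added explicit observation that $(2n+2)$ is the unique even partition of $2(n+1)$ containing $\la_c$).
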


Maintaining the notation in \secref{SecPhi}, we have $\la_c=(2n+2), \mu_c=(n+1)$ and  $w_{\mu_c}=(1)$.
Therefore,  the nonzero element $\widetilde{\Phi}\in \Ker F_{n+1}^{n+1}$ defined in \eqnref{EqnwPhi} now reads
$$\widetilde{\Phi}=B(n+1)=\Phi_{LZ}(n+1)\in B_{n+1}(-2n),$$
This $\widetilde{\Phi}$  exactly  coincides with the element $\Phi$ defined by Lehrer and Zhang in \cite[Section 5]{LZ4}.
We also immediately recover \cite[Lemma 5.3]{LZ4}.
\begin{lem}\cite[Lemma 5.3]{LZ4}\label{lemProPhi} The element $\widetilde{\Phi}\in \Ker F_{n+1}^{n+1} $ has the following properties:
	\begin{enumerate}[(1)]
		\item $e_i\widetilde{\Phi}=\widetilde{\Phi} e_i=0$ for all $e_i\in B_{n+1}(-2n)$;
		\item $\widetilde{\Phi}^2=(n+1)!\widetilde{\Phi}$, hence $E:=\frac{1}{(n+1)!}\widetilde{\Phi}$ is an idempotent.
	\end{enumerate}	
\end{lem}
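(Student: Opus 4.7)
The plan is to derive both assertions as immediate specializations of the general results already established, with no further computation. Recall that in the present situation $m=0$, so $r_c=(m+1)(n+1)=n+1$, $\mu_c=(n+1)$, $\lambda_c=(2n+2)$, and $w_{\mu_c}=1$, so that $\widetilde{\Phi}=B(n+1)=\Phi_{LZ}(n+1)\in B_{n+1}(-2n)$.

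For part (1), I would simply invoke Proposition~\ref{AnnThm}, which asserts that every element of $\Ker F_{r_c}^{r_c}$ is annihilated on both sides by each generator $e_i$. Since $\widetilde{\Phi}\in\Ker F_{n+1}^{n+1}=\Ker F_{r_c}^{r_c}$ by the construction in Section~\ref{SecPhi} together with Lemma~\ref{lemKermini}, the conclusion $e_i\widetilde{\Phi}=\widetilde{\Phi}e_i=0$ is immediate.

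For part (2), I would quote the quasi-idempotency identity \eqref{EqnwPhi2} from Lemma~\ref{propidemp}, namely $\widetilde{\Phi}^2=c_{m,n}\widetilde{\Phi}$ with $c_{m,n}=((m+1)!)^{n+1}h_{\mu_c}$, and specialize to $m=0$. Then $((m+1)!)^{n+1}=1$, and because $\mu_c=(n+1)$ is a single row, its hook product is $h_{\mu_c}=(n+1)!$. Substituting yields $\widetilde{\Phi}^2=(n+1)!\,\widetilde{\Phi}$, whence $E=\frac{1}{(n+1)!}\widetilde{\Phi}$ satisfies $E^2=E$.

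There is no genuine obstacle here: the entire content has already been proved in the general orthosymplectic setting, and the lemma is a direct translation when $m=0$. The only point worth mentioning carefully is the hook-length identification $h_{(n+1)}=(n+1)!$, which makes the constant $c_{0,n}$ collapse exactly to $(n+1)!$ and confirms that Lehrer--Zhang's original idempotent $E$ is recovered as the natural normalization of our $\widetilde{\Phi}$ in the purely odd case.
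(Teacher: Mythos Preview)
Your proposal is correct and matches the paper's own proof essentially verbatim: part (1) is deduced from Proposition~\ref{AnnThm} and part (2) from Lemma~\ref{propidemp}, specialized to $m=0$. Your explicit check that $c_{0,n}=h_{(n+1)}=(n+1)!$ is exactly the computation implicit in the paper's one-line appeal to Lemma~\ref{propidemp}.
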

\begin{proof}
	Part (1) follows from \propref{AnnThm}.  Part (2) is immediate by \lemref{propidemp}.
\end{proof}

We now easily recover the second fundamental theorem of invariant theory for symplectic groups proved in \cite[Theorem 5.9]{LZ4}.

\begin{thm}\cite[Theorem 5.9]{LZ4}
	The algebra homomorphism $F_r^r: B_r(-2n)\rightarrow \End_{\Sp(2n)}(V^{\ot r})$ is an isomorphism if $r\leq n$. If $r\geq n+1$, then $\Ker F_r^r$  is generated as a 2-sided ideal of $B_r(-2n)$ by the idempotent $E=\frac{1}{(n+1)!}\widetilde{\Phi}$.
\end{thm}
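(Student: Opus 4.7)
The first assertion is almost immediate from what has already been established. If $r\le n$, then $r<r_c=n+1$, so by \lemref{lemKermini} (equivalently, by the specialization $m=0$ of \thmref{thmKeriso}) we have $\Ker F_r^r=0$. Combined with the surjectivity of $F_r^r$ from \corref{corofft}, this yields that $F_r^r$ is an algebra isomorphism.

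For the second assertion, the plan is to invoke \thmref{thm:SFT-refine}, which states that for $r\ge r_c$ the kernel $\Ker F_r^r$ is generated as a 2-sided ideal of $B_r(-2n)$ by the set
\[
\{\widetilde{\Phi}^{\bi}_{\bj}\otimes I_{r-r_c}\mid \{\bi,\bj\}\in\CI_{r_c}\}.
\]
The strategy is to show that in the purely symplectic specialisation $m=0$, the indexing set $\CI_{r_c}$ collapses to the single trivial pair $\{\emptyset,\emptyset\}$, so that the kernel is generated by $\widetilde{\Phi}\otimes I_{r-n-1}$ alone. Under the canonical embedding $B_{n+1}(-2n)\hookrightarrow B_r(-2n)$ of \rmkref{rmk:embed}, this element is precisely the image of $\widetilde{\Phi}=(n+1)!\,E$, so the 2-sided ideal it generates coincides with the 2-sided ideal generated by $E$.

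The key combinatorial point to verify is the collapse of $\CI_{r_c}$. Recalling \defref{defstdseq}, the type $\ty(\bi)=(a_1,\ldots,a_{n+1})$ of a standard sequence $\bi$ satisfies $0\le a_p\le m+1$ for each $p$. When $m=0$ this forces $a_p\in\{0,1\}$, and the increasing hypothesis $a_1\le a_2\le\cdots\le a_{n+1}$ means that $a_{n+1}=0$ implies all $a_p=0$, i.e.\ $\bi=\emptyset$. The same applies to $\bj$. The defining condition $a_{n+1}+b_{n+1}\le m+1=1$ of $\CI_{r_c}$ therefore splits into three subcases: either (i) $a_{n+1}=b_{n+1}=0$, giving $\bi=\bj=\emptyset$; or (ii) exactly one of $a_{n+1},b_{n+1}$ equals $1$, in which case the corresponding sequence has length $\ge 1$ while the other has length $0$, violating $\ell(\bi)=\ell(\bj)$. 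Thus (i) is the only possibility, and $\CI_{r_c}=\{\{\emptyset,\emptyset\}\}$. The corresponding generator $\widetilde{\Phi}^{\emptyset}_{\emptyset}\otimes I_{r-n-1}=\widetilde{\Phi}\otimes I_{r-n-1}$ is, by \lemref{lemProPhi}, a nonzero scalar multiple of the idempotent $E\otimes I_{r-n-1}$. Since there is no real obstacle beyond this combinatorial bookkeeping, the theorem follows directly.
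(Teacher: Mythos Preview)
Your proof is correct. Both your argument and the paper's rely on \thmref{thm:SFT-refine} to reduce to the minimal degree $r_c=n+1$, but diverge in how they handle that case. You unpack the index set $\CI_{r_c}$ combinatorially and show it collapses to the single pair $\{\emptyset,\emptyset\}$, so the generating set is literally $\{\widetilde{\Phi}\otimes I_{r-n-1}\}$. The paper instead observes from \lemref{lemKermini} that $\Ker F_{n+1}^{n+1}\cong S^{(2n+2)}$ is one-dimensional, so any nonzero element---in particular $E$---generates it as a $2$-sided ideal. The dimension argument is a bit shorter and avoids revisiting \defref{defstdseq} and the definition of $\CI_{r_c}$; your route is more explicit and shows directly why no nontrivial $\widetilde{\Phi}^{\bi}_{\bj}$ survives. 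Either way the conclusion is immediate.
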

\begin{proof}
	It follows from \thmref{thm:SFT-refine} that $\Ker F_r^r$ is generated by the generators of $\Ker F_{n+1}^{n+1}$. By  \lemref{lemProPhi} we have nonzero idempotent $E=((n+1)!)^{-1}\widetilde{\Phi}\in \Ker F_{n+1}^{n+1}$, which generates the kernel $\Ker F_{n+1}^{n+1}$ since  $\dim \Ker F_{n+1}^{n+1}=1$ by \lemref{lemKermini}.
\end{proof}

\subsection{The orthogonal group}
We take $V=V_{\bar{0}}$ to be purely even with $\dim V_{\bar{0}}=m$. Then $n=0$,
and the nondegenerate bilinear form $(-,-)$ on $V$ is symmetric. The   isometry group of the form is $\Or(m)$.  Applying \corref{corofft} again, we obtain the surjective algebra homomorphism
$$
F_r^r: B_r(m)\longrightarrow \End_{\Or(m)}(V^{\ot r}).
$$
Now $r_c=m+1$. It follows from \thmref{thmKeriso} that
\begin{lem}\label{lemKermini2}
	$\Ker F_r^r\neq 0$  if and only if $r\geq m+1$. Furthermore, $\Ker F_{m+1}^{m+1}\cong S^{(2^{m+1})}$.
\end{lem}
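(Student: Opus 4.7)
The plan is to derive this lemma as the orthogonal-side specialisation of \thmref{thmKeriso}, exactly parallel to the way \lemref{lemKermini} was deduced for the symplectic group. In the present setup $V=V_{\bar 0}$ is purely even of dimension $m$, so $\sdim V=(m|0)$ and hence $n=0$ throughout Section 2. Consequently the critical parameters from Section 3 become
\[
r_c=(m+1)(n+1)=m+1,\qquad \la_c=((2n+2)^{m+1})=(2^{m+1}),
\]
as already recorded in the opening line of Subsection 7.2.

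For the first assertion, I would simply translate the ``if and only if'' clause of \thmref{thmKeriso}: $\Ker F_r^r\neq 0$ exactly when $r\ge r_c$, which here reads $r\ge m+1$.

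For the identification of $\Ker F_{m+1}^{m+1}$, I would set $r=m+1$, so $2r=2m+2=|\la_c|$, and read off from \thmref{thmKeriso} the multiplicity-free decomposition
\[
\Ker F_{m+1}^{m+1}\;\cong\;\bigoplus_{\substack{\la\in 2\CP_{m+1}\\ \la\supseteq(2^{m+1})}} S^\la.
\]
The only remaining check is that this indexing set is a singleton. An even partition $\la\vdash 2m+2$ which contains $(2^{m+1})$ boxwise must satisfy $|\la|\ge|(2^{m+1})|=2m+2$, and equality forces $\la=(2^{m+1})$. The direct sum therefore collapses to the single summand $S^{(2^{m+1})}$, yielding the claimed isomorphism.

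No genuine obstacle arises: all of the structural work has been carried out in \thmref{thmKeriso}, and this lemma is simply its $n=0$ specialisation, completely analogous to \lemref{lemKermini2} being the $m=0$ specialisation in the symplectic case.
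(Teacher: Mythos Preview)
Your proof is correct and follows exactly the paper's approach: the lemma is stated immediately after the clause ``It follows from \thmref{thmKeriso} that'', so the paper's own argument is nothing more than the $n=0$ specialisation you have written out. One small slip: in your final sentence you refer to \lemref{lemKermini2} as the symplectic ($m=0$) case, but that is the present lemma; you mean \lemref{lemKermini}.
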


In the present case, $\la_c=(2^{m+1})$, $\mu_c=(1^{m+1})$ and $w_{\mu_c}=(1)$. By \lemref{lemKermini2}, we have
$$\dim  \Ker F_{m+1}^{m+1}=\dim  S^{(2^{m+1})}=\frac{(2m+2)(2m+1)\cdots(m+3)}{(m+1)!}.
$$
This agrees with the result of \cite{DH,HX}.
The nonzero element $\widetilde{\Phi}$ in \eqref{EqnwPhi}  has the form
$$\widetilde{\Phi}=(m+1)!A(m+1)=(m+1)!\sum_{\sigma\in \Sym_{m+1}}\epsilon(\sigma)\sigma.$$
For convenience, we shall omit the scalar multiple and write $\widetilde{\Phi}:=A(m+1)$ instead.

Suppose that $\bi$ and $\bj$ are standard sequences with $\ell(\bi)=\ell(\bj)=k$ $(0\leq k\leq m+1)$,  which in the present case are given by $\bi=(1,2,\dots,k)$ and $\bj=(\overline{1},\overline{2},\dots,\overline{k})$.  Hence by using symmetry properties of $\widetilde{\Phi}$,  we can depict $\widetilde{\Phi}^{\bi}_{\bj}$ pictorially as
\begin{center}
	\begin{tikzpicture}[font=\scriptsize]
	\draw (0,0) rectangle (2.5,0.7);
	\draw (0.1,0) -- (0.1,-0.8);
	\draw (0.1,0.7) -- (0.1,1.4);
	\draw (1.2,0) -- (1.2,-0.8);
	\draw (1.2,0.7) -- (1.2,1.4);
	
	\node at (1.25,0.35){$A(m+1)$};
	\node at (0.65,1.05){$...$};  	
	\node at (0.65,-0.4){$...$};
	\node at (3.9,1.3){$...$};
	\node at (3.9,-0.7){$...$};
	\node at (3.9,1.5){$k$};
	\node at (3.9,-0.9){$k$};	 			
	
	\draw  plot  [smooth,tension=1] coordinates{(1.6,0.7) (2.8,1.4) (4.2,-0.8)};
	\draw  plot  [smooth,tension=1] coordinates{(2.2,0.7) (2.8,1.0) (3.7,-0.8)};
	
	\draw  plot  [smooth,tension=1] coordinates{(1.6,0) (2.8,-0.7) (4.2,1.4)};
	\draw  plot  [smooth,tension=1] coordinates{(2.2,0) (2.8,-0.3) (3.7,1.4)}; 		
	\node at (4.5,-0.4){.}; 		
	\end{tikzpicture}
\end{center}
Note that $\widetilde{\Phi}^{\bi}_{\bj}$ with $\ty(\bi)=\ty(\bj)=k$ is exactly the generator $E_{k}$ introduced in \cite[Definition 4.2]{LZ1} and \cite[Section 6]{LZ4}. We immediately recover the following key lemma in \cite[Proposition 6.1]{LZ1} by using \lemref{propPhigen}.
\begin{lem}\cite[Proposition 6.1]{LZ1}\label{lemOrbOrtho}
	If $r\geq m+1$, then $\Ker F_r^r$ is generated as a 2-sided ideal by $\widetilde{\Phi}^{\bi}_{\bj}$ with $\ty(\bi)=\ty(\bj)=(k)$ for all $k=0,1,\dots,[\frac{m+1}{2}]$.
\end{lem}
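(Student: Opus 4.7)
My plan is to derive this lemma as a direct specialization of \thmref{thm:SFT-refine} (equivalently \thmref{thmKermini} together with the embedding argument) to the classical orthogonal setting $n=0$, $m>0$. Under this specialization, $r_c = m+1$, $\la_c = (2^{m+1})$ and $\mu_c = (1^{m+1})$, so the symmetric subgroup $H_{r_c}$ and the entire combinatorial apparatus of types collapse to something essentially trivial.

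First I would analyze the set $\CI_{r_c}$ defined in \eqref{eq:Genset2} in the case $n=0$. Since $n+1 = 1$, a standard sequence $\bi$ of length $k$ lives entirely in the single "piece" spanning $\{1,2,\dots,m+1\}$, and the only standard sequence of increasing type is $\bi = (1,2,\dots,k)$ (and analogously $\bj = (\bar 1, \bar 2, \dots, \bar k)$ for the bottom row). Thus $\ty(\bi) = (k)$ automatically, and the single-pair constraint $a_{n+1}+b_{n+1} \leq m+1$ reduces simply to $2k \leq m+1$, i.e.\ $k \in \{0,1,\dots,[\tfrac{m+1}{2}]\}$. Hence $\CI_{r_c}$ is indexed bijectively by this finite set.

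Next I would apply \thmref{thm:SFT-refine}: for $r \geq r_c = m+1$, the kernel $\Ker F_r^r$ is generated as a $2$-sided ideal of $B_r(m)$ by the finite set
\[
\{\widetilde{\Phi}^{\bi}_{\bj} \ot I_{r-r_c} \mid \{\bi,\bj\}\in \CI_{r_c}\},
\]
which by the preceding paragraph is exactly the set of elements $\widetilde{\Phi}^{\bi}_{\bj}$ with $\ty(\bi)=\ty(\bj)=(k)$ for $k=0,1,\dots,[\tfrac{m+1}{2}]$, tensored with $I_{r-r_c}$. The tensor factor $I_{r-r_c}$ may be absorbed into the ambient $2$-sided ideal structure (it is precisely the image under the canonical embedding $B_{r_c}(m)\hookrightarrow B_r(m)$ of \rmkref{rmk:embed}), so the lemma follows.

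The only "obstacle" is really bookkeeping: checking that the diagrammatic generator $\widetilde{\Phi}^{\bi}_{\bj}$ in this classical case coincides with the operator $E_k$ in \cite[Definition 4.2]{LZ1} (which the paragraph preceding the lemma statement already asserts), and verifying that there is no content in the Garnir-relation step of \lemref{propPhigen} beyond the trivial reduction $a_{n+1}+b_{n+1}\leq m+1$. Both are immediate because for $n=0$ there is only one "column group" to consider, so no nontrivial permutation among columns needs to be carried out; the recursive reduction in the proof of \lemref{propPhigen} terminates instantly once the inequality $2k\leq m+1$ holds.
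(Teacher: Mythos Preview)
Your proposal is correct and takes essentially the same approach as the paper. The paper's proof is just the one-line remark ``We immediately recover the following key lemma \dots\ by using \lemref{propPhigen}''; you instead invoke \thmref{thm:SFT-refine} (which packages \lemref{propPhigen}, \thmref{thmKermini}, and the embedding argument together) and spell out explicitly why the specialization $n=0$ collapses $\CI_{r_c}$ to the index set $k\in\{0,1,\dots,[\tfrac{m+1}{2}]\}$, which is the claimed list of generators.
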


Denote by $B^{(1)}_{m+1}(m)$ the two-sided ideal generated by $e_1$ in $B_{m+1}(m)$, and define 
\[F_k:=A(m+1-k)\ot A(k).  \]
Let $\ty(\bi)=\ty(\bj)=(k)$, 
Then from the previous figure for $\widetilde{\Phi}^{\bi}_{\bj}$ we have
\begin{equation}\label{eqPhiortho}
\widetilde{\Phi}^{\bi}_{\bj}\equiv F_k  \pmod{B^{(1)}_{m+1}(m)}\quad \text{and}\quad
\widetilde{\Phi}^{\bi}_{\bj}\,\sigma=\sigma\widetilde{\Phi}^{\bi}_{\bj}=\epsilon(\sigma)\widetilde{\Phi}^{\bi}_{\bj},
\end{equation}
where  $\sigma\in \Sym_{m+1-k}\times  \Sym_k$. The following lemma is from \cite[Lemma 5.10, Corollary 5.14]{LZ1} and \cite[Lemma 6.2]{LZ4}

\begin{lem} \label{lemOrProPhi}
	Let $\bi$ and $\bj$ be standard sequences with $\ty(\bi)=\ty(\bj)=k$, then $\widetilde{\Phi}^{\bi}_{\bj}$ has the following properties:
	\begin{enumerate}
		\item $e_i\,\widetilde{\Phi}^{\bi}_{\bj}=\widetilde{\Phi}^{\bi}_{\bj}\,e_i=0$ for all $e_i\in B_{m+1}(m)$;
		\item $(\widetilde{\Phi}^{\bi}_{\bj})^2=k!(m+1-k)!\,\widetilde{\Phi}^{\bi}_{\bj}$, hence $E_k:=a_k\widetilde{\Phi}^{\bi}_{\bj}$  with $a_k=(k!(m+1-k)!)^{-1}$ is an idempotent for all $0\leq k\leq m+1$.
	\end{enumerate}	
\end{lem}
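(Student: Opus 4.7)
The plan is to derive both parts directly from the general annihilation results for $\Ker F_{r_c}^{r_c}$ established in \propref{AnnThm} and its corollary, which become especially potent in the orthogonal setting where $n=0$ forces $r_c=m+1$. Consequently $\widetilde{\Phi}^{\bi}_{\bj}\in \Ker F_{m+1}^{m+1}=\Ker F_{r_c}^{r_c}$ inherits all such properties. Part (1) is then immediate: \propref{AnnThm} applied to $\widetilde{\Phi}^{\bi}_{\bj}$ yields $e_i\widetilde{\Phi}^{\bi}_{\bj}=\widetilde{\Phi}^{\bi}_{\bj}e_i=0$ for every generator $e_i$ of $B_{m+1}(m)$.

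For part (2), I would exploit the congruence $\widetilde{\Phi}^{\bi}_{\bj}\equiv F_k \pmod{B^{(1)}_{m+1}(m)}$ recorded in \eqnref{eqPhiortho}. Writing $\widetilde{\Phi}^{\bi}_{\bj}=F_k+z$ with $z\in B^{(1)}_{m+1}(m)$, expand
$$(\widetilde{\Phi}^{\bi}_{\bj})^2=\widetilde{\Phi}^{\bi}_{\bj}F_k+\widetilde{\Phi}^{\bi}_{\bj}z.$$
The corollary to \propref{AnnThm} forces $\widetilde{\Phi}^{\bi}_{\bj}z=0$, as every element of $B^{(1)}_{m+1}(m)$ is a linear combination of Brauer diagrams carrying horizontal edges. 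For the surviving term, expand
$$F_k=A(m+1-k)\otimes A(k)=\sum_{\sigma\in \Sym_{m+1-k}\times \Sym_k}\epsilon(\sigma)\sigma,$$
and apply the sign-equivariance $\widetilde{\Phi}^{\bi}_{\bj}\sigma=\epsilon(\sigma)\widetilde{\Phi}^{\bi}_{\bj}$ also recorded in \eqnref{eqPhiortho}. Each summand contributes $\epsilon(\sigma)^2\widetilde{\Phi}^{\bi}_{\bj}=\widetilde{\Phi}^{\bi}_{\bj}$, so
$$\widetilde{\Phi}^{\bi}_{\bj}F_k=|\Sym_{m+1-k}\times \Sym_k|\,\widetilde{\Phi}^{\bi}_{\bj}=k!(m+1-k)!\,\widetilde{\Phi}^{\bi}_{\bj},$$
and the idempotency $E_k^2=E_k$ for $E_k=a_k\widetilde{\Phi}^{\bi}_{\bj}$ with $a_k=(k!(m+1-k)!)^{-1}$ follows at once by rescaling.

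There is no genuine obstacle here: the entire argument is a transparent specialisation of the general SFT machinery. The only point requiring any care is verifying the decomposition $\widetilde{\Phi}^{\bi}_{\bj}=F_k+z$ with $z\in B^{(1)}_{m+1}(m)$, but this is read off directly from the pictorial description of $\widetilde{\Phi}^{\bi}_{\bj}$ displayed just before \eqnref{eqPhiortho}: the only summand produced by composing the top and bottom alternators $A(m+1)$ with the cap--cup turn-arounds that avoids creating a horizontal edge is precisely $F_k=A(m+1-k)\otimes A(k)$, and every other contribution contains a cup or cap lying inside the original Brauer algebra, hence lies in the $e_1$-ideal.
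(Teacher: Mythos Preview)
Your proof is correct and follows essentially the same route as the paper: part (1) directly from \propref{AnnThm}, then for part (2) use the congruence $\widetilde{\Phi}^{\bi}_{\bj}\equiv F_k\pmod{B^{(1)}_{m+1}(m)}$ of \eqref{eqPhiortho} together with the annihilation property to reduce $(\widetilde{\Phi}^{\bi}_{\bj})^2$ to $\widetilde{\Phi}^{\bi}_{\bj}F_k$, and the sign-equivariance to evaluate this as $k!(m+1-k)!\,\widetilde{\Phi}^{\bi}_{\bj}$. One small descriptive slip in your last paragraph: in the orthogonal case $\widetilde{\Phi}=A(m+1)$ is a \emph{single} antisymmetriser, not ``top and bottom alternators''; but since you rely on \eqref{eqPhiortho} rather than re-deriving it, this does not affect the argument.
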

\begin{proof}
	Part (1) is immediate from  \propref{AnnThm}.
	Also, we deduce that $\widetilde{\Phi}^{\bi}_{\bj}\,D=0$ for any $D\in B^{(1)}_{m+1}(m)$ and it follows from \eqref{eqPhiortho} that
	$(\widetilde{\Phi}^{\bi}_{\bj})^2=\widetilde{\Phi}^{\bi}_{\bj}\,F_k=k!(m+1-k)!\,\widetilde{\Phi}^{\bi}_{\bj}.$
\end{proof}

Now we give a short proof of  the the second fundamental theorem of invariant theory for the orthogonal group obtained  in \cite[Theorem 4.3]{LZ1}.

\begin{thm} \cite[Theorem 4.3]{LZ1}
	The algebra homomorphism $F_r^r: B_r(m)\rightarrow \End_{\Or(m)}(V^{\ot r})$ is an isomorphism if $r\leq m$. If $r\geq m+1$, then $\Ker F_r^r$  is generated as a 2-sided ideal of $B_r(m)$ by the idempotent $E_{[\frac{m+1}{2}]}$.
\end{thm}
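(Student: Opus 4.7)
The plan is to split into two cases. For $r \leq m$, we have $r < r_c = m+1$, so $F_r^r$ is an isomorphism immediately by Corollary \ref{CorInj}. For $r \geq m+1$, Theorem \ref{thm:SFT-refine} reduces the problem to showing that $\Ker F_{m+1}^{m+1}$ is generated by the idempotent $E_{k^*}$ (where $k^* = [\frac{m+1}{2}]$) as a 2-sided ideal of $B_{m+1}(m)$.

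For this reduction, I would first invoke Lemma \ref{lemOrbOrtho} in the orthogonal specialization, which gives that $\Ker F_{m+1}^{m+1}$ is generated as a 2-sided ideal by the entire family $\{E_0, E_1, \ldots, E_{k^*}\}$. The task therefore becomes showing $E_k \in \langle E_{k^*} \rangle$ for each $0 \leq k < k^*$. By Lemma \ref{lemKermini2}, $\Ker F_{m+1}^{m+1} \cong S^{(2^{m+1})}$ is simple as a $\CA_{2(m+1)}$-module, so the non-zero element $E_{k^*}$ generates the kernel under the symmetric group action: $\CA_{2(m+1)} \ast E_{k^*} = \Ker F_{m+1}^{m+1}$. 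It thus suffices to verify $\sigma \ast E_{k^*} \in \langle E_{k^*} \rangle$ for every $\sigma \in \Sym_{2(m+1)}$. For $\sigma \in H_{m+1}$ this is immediate, since then $\sigma \ast E_{k^*} = \sigma_1 E_{k^*} \sigma_2^{-1}$ with $\sigma_1, \sigma_2 \in \CA_{m+1} \subset B_{m+1}(m)$. For each coset representative $\tau = (\bi, \bj)$ of $H_{m+1}$ in $\Sym_{2(m+1)}$, the bent element $\tau \ast E_{k^*}$ must then be realised as $X_\tau \cdot E_{k^*} \cdot Y_\tau$ for suitable $X_\tau, Y_\tau \in B_{m+1}(m)$, recasting the ``vertex-bending'' as composition with cup-cap Brauer diagrams in the Brauer category.

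The main obstacle lies in executing this last step in full generality. Concretely, one must show that the diagrammatic bending operation $\psi^{\bi}_{\bj}$ applied to $E_{k^*}$ can be factored as two-sided Brauer algebra multiplication on $E_{k^*}$ itself; the annihilation property $E_{k^*} e_i = e_i E_{k^*} = 0$ from Lemma \ref{lemOrProPhi}(1) is the key tool, collapsing redundant arc terms that arise in the factorization, and Lemma \ref{lemyongiden} together with its natural extension from the transposition $(12)$ to any $t_i = (2i-1, 2i)$ (via conjugation by the column-stabiliser subgroup $\Sym\{C_1\}$) supplies the combinatorial identities needed, paralleling the arguments of Theorem \ref{thmKermini} and Theorem \ref{thmgen}. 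The pattern is already visible in the smallest case $m=1$, where a direct computation gives $\tau \ast E_1 = (1-s_1)\,E_1 \in \langle E_1 \rangle$ for the swap $\tau = (1,\bar 1)$, and the general case follows the same diagrammatic template with additional Garnir-type bookkeeping.
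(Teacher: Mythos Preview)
Your reduction to $r=m+1$ via Corollary~\ref{CorInj} and Theorem~\ref{thm:SFT-refine} is correct, and invoking Lemma~\ref{lemOrbOrtho} to reduce to showing $E_k\in\langle E_{k^*}\rangle_{m+1}$ for all $k<k^*$ is also fine. But the argument stalls at precisely the point you identify as ``the main obstacle''. Passing through simplicity of $\Ker F_{m+1}^{m+1}$ as an $\CA_{2(m+1)}$-module does not buy you anything: the claim that every $\tau\ast E_{k^*}$ lies in $\langle E_{k^*}\rangle_{m+1}$ is \emph{equivalent} to the containment $\Ker F_{m+1}^{m+1}\subseteq\langle E_{k^*}\rangle_{m+1}$ you are trying to prove, so the reformulation is circular unless you actually produce the factorizations $X_\tau E_{k^*}Y_\tau$. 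You do this only for $m=1$, and the appeal to Lemma~\ref{lemyongiden} and to ``paralleling'' Theorem~\ref{thmgen} does not transfer cleanly: in Theorem~\ref{thmgen} the type is an $(n+1)$-tuple and one uses a \emph{separate} column anti-symmetriser $\alpha^-(C_{2t-1})$ for each coordinate to be lowered, whereas here $n=0$, the type is a single integer, there is only one odd column $C_1$ in $\ft^{\la_c}$, and a single application of $\alpha^-(C_1)$ undoes only one transposition. Iterating to undo several transpositions $\prod_s(2s-1,2s)$ requires further identities that you have not supplied.

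The paper's proof avoids this difficulty by a different and shorter route. It works not with the $\CA_{2(m+1)}$-action but inside $\C\Sym_{m+1}$: using $E_k\equiv a_kF_k\pmod{B_{m+1}^{(1)}(m)}$ with $F_k=A(m+1-k)\otimes A(k)$, one observes via Pieri's rule that $\langle F_k\rangle_0\subseteq\langle F_{k+1}\rangle_0$ as two-sided ideals of $\C\Sym_{m+1}$ for $k<k^*$, writes $F_k=\sum_l\sigma_{kl}F_{k+1}\tau_{kl}$, and then the annihilation property $E_ke_i=0$ together with the skew-symmetry of $E_k$ yields $E_k\big(\sum_l\sigma_{kl}E_{k+1}\tau_{kl}\big)=k!(m+1-k)!\,a_{k+1}E_k$, giving $E_k\in\langle E_{k+1}\rangle_{m+1}$ directly. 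This establishes the whole chain $\langle E_0\rangle\subseteq\cdots\subseteq\langle E_{k^*}\rangle$ in one stroke, with the representation theory of $\Sym_{m+1}$ (not $\Sym_{2(m+1)}$) doing the work.
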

\begin{proof} We only need to consider the case with $r\ge m+1$.
	By \lemref{lemOrbOrtho}, it suffices to show the following chain of 2-sided ideals in $B_{m+1}(m)$
	$$\langle E_0\rangle_{m+1} \subseteq \langle E_1\rangle_{m+1} \subseteq \cdots \subseteq \langle E_{[\frac{m+1}{2}]}\rangle_{m+1}.$$
	
	To prove this, we claim that $\langle F_{k} \rangle_0\subseteq \langle F_{k+1}\rangle_0$ for all $k=0,1,\dots,[\frac{m+1}{2}]-1$, where $\langle F_k\rangle_0$ denotes the two-sided ideal generated by $F_k$ in $\Sym_{m+1}$. Actually, by Pieri's rule we know that $\langle F_k\rangle_0$ is the sum of 2-sided simple ideals such that each simple ideal corresponds to Young diagram  with at least $m+1-k$ boxes in its first column and $m+1$ boxes in the first and second columns, whence the claim follows.
	
	Now there exist $\sigma_{kl}, \tau_{kl}\in \Sym_{m+1}$ such that $F_{k}=\sum_{l}\sigma_{kl}F_{k+1}\tau_{kl}$. Note that $E_{k+1}=a_{k+1}\widetilde{\Phi}^{\bi}_{\bj}\equiv a_{k+1} F_{k+1} \pmod{B^{(1)}_{m+1}(m)}$ with $\ty(\bi)=\ty(\bj)=(k+1)$. By part (1) of \lemref{lemOrProPhi},  we have
	$$
	E_{k}\sum_{l}\sigma_{kl}E_{k+1}\tau_{kl}=a_{k+1}E_{k}\sum_{l}\sigma_{kl}F_{k+1}\tau_{kl}=a_{k+1}E_{k}F_{k}=k!(m+1-k)!a_{k+1}E_{k}.
	$$
	Hence $E_{k}\in \langle E_{k+1}\rangle_{m+1}$ for $0\leq k\leq [\frac{m+1}{2}]$ as required.
\end{proof}


\end{document}